\newtheorem{lemma}{Lemma}[section]
\newtheorem{proposition}{Proposition}[section]
\newtheorem{theorem}{Theorem}[section]
\newtheorem{corollary}{Corollary}[section]
\newtheorem{definition}{Definition}[section]
\newtheorem{example}{Example}[section]
\newtheorem{remark}{Remark}[section]
\def\section{\@startsection{section}{1}%
\z@{1\linespacing\@plus\linespacing}{1\linespacing}%
{\bf\centering}}
\def\subsection{\@startsection{subsection}{0}%
\z@{\linespacing\@plus\linespacing}{\linespacing}%
{\bf}}
\DeclareMathOperator{\Id}{{\rm Id}}
\DeclareMathOperator{\supp}{supp}
\DeclareMathOperator{\Spec}{Spec}
\DeclareMathOperator{\dist}{dist}
\DeclareMathOperator{\loc}{loc}
\providecommand{\pro}[1]{(#1_t)_{t \geq 0}}
\providecommand{\seq}[1]{(#1_n)_{n\in \mathbb{N}}}
\newcommand{\cK}{\mathcal{K}}
\newcommand{\cB}{\mathcal{B}}
\newcommand{\cE}{\mathcal{E}}
\newcommand{\R}{\mathbf{R}}
\newcommand{\1}{\mathbf{1}}
\newcommand{\pr}{\mathbf{P}}
\newcommand{\ex}{\mathbf{E}}
\newcommand{\Rd}{\mathbf{R}^d}
\newcommand{\N}{\mathbf{N}}
\begin{document}
\title[Fall-off for non-local Schr\"odinger operators with decaying potentials]
{Fall-off of eigenfunctions for non-local Schr\"odinger operators with decaying potentials}
\author{Kamil Kaleta and J\'ozsef L\H orinczi}
\address{Kamil Kaleta, Institute of Mathematics \\ University of Warsaw \\
ul. Banacha 2, 02-097 Warszawa and Faculty of Pure and Applied Mathematics \\
 Wroc{\l}aw University of Science and Technology
\\ Wyb. Wyspia{\'n}skiego 27, 50-370 Wroc{\l}aw, Poland}
\email{kamil.kaleta@pwr.edu.pl, kkaleta@mimuw.edu.pl}

\address{J\'ozsef L\H orinczi,
Department of Mathematical Sciences, Loughborough University \\
Loughborough LE11 3TU, United Kingdom}
\email{J.Lorinczi@lboro.ac.uk}

\thanks{\emph{Key-words}: symmetric L\'evy process, subordinate Brownian motion, Feynman-Kac semigroup,
non-local Schr\"odinger operator, jump-paring condition, ground state, decay of eigenfunctions, negative
eigenvalue, first hitting time of balls \\
\noindent
2010 {\it MS Classification}: Primary 47D08, 60G51; Secondary 47D03, 47G20 \\
\noindent
K. Kaleta was supported by the National Science Center (Poland) grant 2012/04/S/ST1/00093 and by the Foundation for Polish Science.
We both are grateful to IH\'ES, Bures-sur-Yvette, for a research stay where this work has started.
}

\begin{abstract}
We study the spatial decay of eigenfunctions of non-local Schr\"odinger operators whose kinetic terms are generators of symmetric
jump-paring L\'evy processes with Kato-class potentials decaying at infinity. This class of processes has the property
that the intensity of single large jumps dominates the intensity of all multiple large jumps. We find that the decay rates
of eigenfunctions depend on the process via specific preference rates in particular jump scenarios, and depend on the
potential through the distance of the corresponding eigenvalue from the edge of the continuous spectrum. We prove that
the conditions of the jump-paring class imply that for all eigenvalues the corresponding positive eigenfunctions decay
at most as rapidly as the L\'evy intensity.
This condition is sharp in the sense that if the jump-paring property fails to hold, then eigenfunction decay becomes
slower than the decay of the L\'evy intensity. We furthermore prove that under reasonable conditions the L\'evy intensity
also governs the upper bounds of eigenfunctions, and ground states are comparable with it, i.e., two-sided bounds hold.
As an interesting consequence, we identify a sharp regime change in the decay of eigenfunctions as the L\'evy intensity is varied
from sub-exponential to exponential order, and dependent on the location of the eigenvalue, in the sense that through the
transition L\'evy intensity-driven decay becomes slower than the rate of decay of the L\'evy intensity.
Our approach is based on path integration and probabilistic potential theory techniques,
and all results are also illustrated by specific examples.

\end{abstract}

\maketitle

\baselineskip 0.5 cm

\bigskip
\section{Introduction}
\noindent
Non-local Schr\"odinger operators of the form
$$
H = H_0 + V,
$$
where $H_0$ is a non-local (pseudo-differential, see \cite{bib:J}) operator giving the kinetic term and $V$ is a multiplication
operator called potential, receive increasing attention in both pure and applied mathematics. They pose intriguing
problems in the intersection areas of functional analysis, PDE and probability, and offer a new framework in
scientific modelling, providing life-like correctives and refinements to established theories. In the present
paper we are focusing on some spectral properties of such operators by developing a stochastic approach via
Feynman-Kac type representations.

There are few examples in which the spectrum and the eigenfunctions of a non-local Schr\"odinger operator are
explicitly known, see \cite{bib:LMa,bib:DL14}. When such detailed expressions are not available, it is
natural to ask about the spatial decay properties of eigenfunctions in function of $V$ at least in terms of estimates.
A basic interest in this property is that it tells of how well a quantum particle described by $H$ is localized in
physical space or, alternatively, of the concentration of mass in the stationary measure of the random process
corresponding to $H$.

The pointwise decay of eigenfunctions of Schr\"odinger operators, i.e., when $H_0 = -\frac{1}{2}\Delta$ and the
underlying random process is Brownian motion, has been much studied in the literature, see e.g. \cite{bib:A,bib:Ca,bib:S82,bib:S}
and the references therein. Suppose $\lambda \in \Spec H$ is an isolated eigenvalue and $\varphi \in L^2(\R^d)$ is a corresponding
eigenfunction of $H$, which is called a ground state and will be denoted by $\varphi_0$ when the eigenvalue lies at the bottom of
the spectrum, i.e., $\lambda_0 = \inf \Spec H$. When $V$ is a confining potential in the sense that $V(x) \to \infty$ as
$|x| \to \infty$, the spectrum of $H$ is purely discrete and a typical answer is that the decay of eigenfunctions is exponential
or faster. For instance, if $V(x) \asymp |x|^{2\beta}$, $\beta   > 1$, then the ground state decays super-exponentially like
$$
\varphi_0(x) \asymp |x|^{-\frac{1+\beta-d}{2}}e^{-\frac{1}{1+\beta}|x|^{1 + \beta}},
$$
i.e., the rate of decay is directly determined by the rate of asymptotic growth of the potential. In this case the semigroup
$\{e^{-tH}: t\geq 0\}$ is intrinsically ultracontractive, and the other eigenfunctions are asymptotically dominated by
the ground state \cite{bib:DS,bib:B}. When the potential is decaying, i.e., $V(x) \to 0$ as $|x| \to \infty$,
the situation is more delicate.
In such cases the discrete component of the spectrum may be empty and thus eigenfunctions may not exist at all.
However, if an eigenfunction does exist, then under some further conditions on the potential it can be shown that it still
decays exponentially \cite{bib:A,bib:Ca}. A typical result is then that whenever $V \in L^\infty_{\rm loc}(\Rd)$ such that
$\liminf_{|x|\to\infty}V(x) \geq 0$ and $\varphi$ is an eigenfunction for eigenvalue $ \lambda < 0$, then for every
$\varepsilon \in (0, |\lambda|)$ there exists $C_\varepsilon   > 0$ such that
\begin{equation}
\label{decayinV}
|\varphi(x)| \leq C_\varepsilon e^{-\sqrt{\frac{|\lambda|-\varepsilon}{2}}|x|}.
\end{equation}

When in $H$ the term $H_0$ is replaced by a non-local operator, the behaviours change essentially. In the paper \cite{bib:KL14}
we have considered not just one but a whole class of operators $H_0$ corresponding to the generators of symmetric L\'evy
processes with the property that all multiple large jumps are dominated under the L\'evy measure with density $\nu$ by a single
large jump (which we call jump-paring L\'evy processes, see Section \ref{sec:Prel} below). This class includes important
examples such as $H_0 = (-\Delta + m^{2/\alpha})^{\alpha/2} -m$, $0 <\alpha <2$, with polynomially decaying L\'evy measure for
$m=0$ (isotropic $\alpha$-stable processes, see \cite{bib:KL, bib:KaKu}), and exponentially localized L\'evy measure for $m > 0$
(relativistic $\alpha$-stable processes), and has a non-trivial overlap with the class of non-local operators obtained under
Bernstein functions of the Laplacian \cite{bib:HIL12}. Taking a sufficiently regular Kato-class confining potential $V$, we showed
that the ground state of $H$ behaves like
\begin{align}
\label{decayinnuV}
\varphi_0(x) \asymp \frac{\nu(x)}{V(x)}.
\end{align}
This gives a neat account of the separate contributions of the unperturbed process and of the perturbation into the decay.
For general Kato-class potentials, \eqref{decayinnuV} has the generic form
\begin{align}
\label{decayinnuextime}
\varphi_0(x) \asymp \nu(x) \, \Lambda_{V}(x),
\end{align}
where $\Lambda_{V}(x)$ is the mean exit time from a unit ball centered at the starting point $x$ of the process under
the potential $V$. This formula gives a probabilistic interpretation to the decay of ground states, from which the above
estimate is also derived. Moreover, we proved that the other eigenfunctions $\varphi_n$, $n \in \Bbb N$, satisfy
$$
|\varphi_n(x)| \leq C_n  \varphi_0(x), \quad x \in \R^d,
$$
with a suitable constant $C_n(X,V)>0$, dependent on the process and the potential. We emphasize that this ground state
domination follows in the non-local case with no involvement and also in lack of intrinsic ultracontractivity. For the
fractional Laplacian ($m=0$ above) and the same potential $V(x) \asymp |x|^{2\beta}$ as above, this means now a much
slower decay like
$$
\varphi_0(x) \asymp \frac{1}{|x|^{\alpha + d + 2\beta}}.
$$
The picture for confining potentials corresponds to the intuition that in this case there is a killing mechanism with
increasing values of the potential, and the tails of the ground state should depend on the balance between the potential
and the L\'evy intensity. In terms of techniques, these results have been established by using sharp uniform estimates
on the local extrema of harmonic functions for the perturbed L\'evy process combined with a close systematic control of
jumps, based on the jump-paring property (see Definition \ref{jumpparing} below).

Eigenfunction decay for non-local Schr\"odinger operators with potentials decreasing to zero has been little understood so
far. In the seminal paper \cite{bib:CMS90} it was considered for the operators $H_0=(-\Delta)^{\alpha/2}$, $0 < \alpha < 2$,
and $H_0 = \sqrt{-\Delta + m^2}-m$, $m   > 0$, using martingale and optional stopping methods combined with precise estimates
of the corresponding resolvent kernels. For an extension of these ideas to other operators of interest in mathematical physics
see \cite{bib:HIL13, bib:LHB}.

In the present work we use the wide framework of symmetric jump-paring processes introduced in \cite{bib:KL14}, which has the 
advantage of accommodating a large selection of interesting types of L\'evy processes without being too abstract, and focus here 
on the complementary and essentially different case of potentials decreasing to zero at infinity. Our aim is to derive the 
fall-off behaviour of eigenfunctions in function of the process and the potential. In particular, a main question we address is if
the basic relationship \eqref{decayinnuextime} continues to hold for decaying potentials. For a potential $V$ decaying to zero
the perturbed processes behave far out like free processes, thus we have $\Lambda_{V}(x) \asymp \mbox{const}$ and it can be
expected that $\varphi_0 \asymp \nu$. This means that there is an essential difference from the confining case in that there
is no longer a balancing mechanism as now both $\nu$ and $V$ decrease with the distance from the origin. A main consequence
is that the contribution of both the process and the potential in the fall-off rates of eigenfunctions is now more subtle.
The effect of the potential appears in the relative position of the corresponding eigenvalue from the edge of the continuous spectrum.
The effect of the process comes in mainly through two parameter functions expressing (inverse) preference rates for specific
jump scenarios. They are defined in \eqref{def:K1}-\eqref{def:K2} below and discussed in detail; a third parameter function
given by \eqref{def:K_3} plays a technical role only.

Our main results can be roughly summarized as follows.
\begin{enumerate}
\item[(1)]
Whenever $\varphi$ is a positive eigenfunction corresponding to an eigenvalue $\lambda \in \R$, it is bounded from below by the
L\'evy intensity (Theorem \ref{prop:lowerbound}), i.e.,
$$
\varphi(x) \geq \mbox{const} \, \nu(x)
$$
for large enough $|x|$, with a prefactor dependent on the process and the eigenvalue. Moreover, if the basic jump-paring
condition (A1.3) below does not hold, then $\varphi$ necessarily decays slower than $\nu$.

\item[(2)]
When an eigenvalue $\lambda$ is sufficiently low-lying below zero, the corresponding
eigenfunction $\varphi$ satisfies
$$
|\varphi(x)| \leq \mbox{const} \left\|\varphi\right\|_{\infty} \nu(x),
$$
for large enough $|x|$, where the constant prefactor depends on the process and the eigenvalue (Theorem \ref{thm:defic6}).
When $\lambda < 0$ is arbitrary, we consider separately symmetric jump-paring processes with L\'evy densities that are
slowly (see \eqref{eq:nu_dbl}) or fast (see \eqref{eq:fsm}-\eqref{eq:comp_dens}) decaying at infinity. Under a smallness condition
involving the parameter functions, see \eqref{eq:intr_killing}-\eqref{eq:unif_bdd} and \eqref{eq:Green_bdd}, we find that the upper
bound of $\varphi$ is again driven by the jump intensity as above (Theorem \ref{thm:defic4}).

\item[(3)]
A combination of the upper and lower bounds in (1)-(2) above gives that when an eigenfunction is a ground state, we have
$$
\varphi_0(x) \asymp \nu(x).
$$
This then holds for low-lying bottom eigenvalues $\lambda_0$, and any negative $\lambda$ whenever the conditions in (1) are in place.
Also, from the above it is seen that \eqref{decayinnuextime} does apply to the case of decaying potentials in these circumstances. The
smallness of the ground state eigenvalue can be understood in terms of the cost of passing a potential barrier plus leaving a ball
(Proposition \ref{smallev}).

\item[(4)]
Our present framework offers a unified treatment of both decaying and confining potentials. In Theorem \ref{thm:defic_conf} we are
able to derive an upper bound for eigenfunctions in the case of confining potentials by using the methods developed in this paper,
and recover a result in \cite{bib:KL14} as a bonus.
\end{enumerate}

It is helpful to see to what expressions these results translate in some specific cases. From these behaviours the following
interesting phenomenon emerges. For a jump-paring process with L\'evy intensity $\nu$ decaying slower than exponentially (e.g.,
polynomially or sub-exponentially heavy-tailed) the corresponding ground state has the same fall-off rate as $\nu$ (see Corollaries
\ref{thm:polynomial}-\ref{thm:subexp} and Remarks \ref{rem:poly}-\ref{rem:inter}). When $\nu$ decays exponentially or faster, the
regime qualitatively changes and a ``phase transition" in the fall-off rates can be observed. For exponentially decaying L\'evy
intensity the following dichotomy occurs. If the ground state is an eigenfunction at a sufficiently low-lying eigenvalue, it has the
same fall-off as $\nu$, while for bottom eigenvalues which are closer to zero (i.e., to the edge of the continuous spectrum), the fall-off
gets much slower, with essential contribution of the eigenvalue into the rate, see Corollary \ref{prop:exp}. (Suggestively, this
means that the ground state of such a process in a deep enough potential well decays like $\nu$, decays slower than $\nu$ if the
well is not deep enough, and the ground state may even cease to exist if the well is too shallow.) When the process is outside of the
jump-paring class, the ground state decays slower than $\nu$ (Theorem \ref{prop:lowerbound}), with significant contribution from the
bottom eigenvalue as long as its absolute value is not too large (see Corollaries \ref{prop:exp}-\ref{prop:supexp}). In particular, the
resulting fall-off becomes comparable to that of perturbed Brownian motion \eqref{decayinV}. We note that this phenomenon can be appreciated as another level in the hierarchy of ground state behaviour, and the order of  increasingly ``regular" properties can be seen as the line evolving from slower than L\'evy intensity-driven decay, through L\'evy intensity-driven decay or faster, and intrinsic ultracontractivity which is topping this by a uniform ergodic behaviour. For further details see Subsection \ref{subsec:phase} and Remark \ref{rem:exp_trans}, and for comparison with confining potentials we refer to \cite[Sects. 2.3-2.5, 4.2]{bib:KL14}.

The mechanism behind these differing behaviours can be heuristically understood as follows. For illustration consider a negative
potential well with a single minimum at the origin, tending to zero as $|x|\to\infty$, with ground state eigenvalue $\lambda_0<0$.
For energetic reasons the paths will be  encouraged to move and spend long times in close neighbourhoods of the bottom of the well
(a concentration effect), while for entropic reasons the process tends to explore space arbitrarily far. Furthermore, a lower ground
state eigenvalue means a more prominent energetic effect with paths concentrating around the bottom of the well, resembling the case of
confining potentials. Stronger concentration means that paths are better localized in space and so the ground state has a faster decay.
In this case the fall-off rate is the best possible, and for the
processes considered it is given by $\nu$. On the other hand, when $|\lambda_0|$ is not large enough, the energetic effect weakens and
the efficiency of the concentration mechanism loses out so that the perturbed process behaves more like the free process. However, for
a process starting far away from the origin and moving via long direct jumps (which is the case for a jump-paring process) rather than
via sequences of long jumps interspersed with
smaller fluctuations, this relatively small energetic effect can still be enough for securing an efficient concentration. This again
gives a better localization of paths and implies a decay of the ground state determined by $\nu$. The jump property described above is
formally expressed by conditions \eqref{eq:intr_killing}-\eqref{eq:unif_bdd} and we think of it as the capacity of responsiveness to
perturbation of the given process. To complete this picture, we note that for jump-paring processes without this property or for other
processes moving in smaller jumps or fluctuating continuously, the paths from far out can cluster around the potential well less
efficiently and once they are back around the origin, they spend comparatively large amounts of time building up ``backlogs" in the
decay-events, so their ground states decay much slower than $\nu$.

In what follows we develop a new methodology to tackle perturbations of jump processes accommodating subtle spectral effects coming
from potentials decaying to zero rather than producing a large killing by growing to infinity at infinity. Using a probabilistic
representation, the spatial decay of eigenfunctions becomes equivalent to the behaviour of mean hitting times of large balls centered at the
origin. The Laplace transforms of such hitting times are harmonic functions with respect to the process killed at a rate given by the absolute
value of the corresponding eigenvalue.
A first technical novelty in this paper is then a full deployment of a sequence of self-improving estimates on functions which we call harmonic
at infinity for the underlying jump-paring L\'evy process killed at a non-zero rate (Lemma \ref{lem:defic1}). We find that the resulting
upper bound driven by $\nu$ holds under the general balancing condition \eqref{eq:cond1} involving the killing rate and the pivotal parameter
functions \eqref{def:K1}-\eqref{def:K_3}. In Theorem \ref{thm:defic2} we show that this condition is satisfied for L\'evy densities with
a doubling property for large arguments. Theorem \ref{thm:defic3}, which applies to jump-paring processes with light jump intensities, is one
of the most involved and crucial technical results in the paper. It states that under the key smallness conditions on the parameter functions
\eqref{eq:intr_killing}-\eqref{eq:unif_bdd}, the Laplace transform of the first hitting time of a closed ball with respect to the path
measure of the process starting far from the origin is dominated by $\nu$. In deriving this result we come to a delicate argument based on the
domination of $\nu$ by a carefully constructed family of jump intensities to which the fundamental Lemma \ref{lem:defic1} can be applied.
We stress that improved estimates in Lemma \ref{lem:defic1} also allow to treat confining potentials now within the same framework, and we
are able to recover the decay estimates obtained in \cite{bib:KL14}, see Theorem \ref{thm:defic_conf} below. In particular, this shows that
if one is interested only in decay properties (and not also in intrinsic ultracontractivity), a detailed tracking of the jumps as done in
the cited paper is not needed, and we obtain a unified and streamlined treatment. We note that all upper estimates
obtained in the present paper are sharp in the sense that they are directly governed by $\nu(x)$ rather than $\nu(cx)$ with some $c \in
(0,1)$, even when $\nu$ is very light at infinity.

The remainder of this paper is organized as follows. In Section 2 we introduce and discuss the class of underlying L\'evy processes, the
corresponding parameter functions, potentials and Feynman-Kac semigroups considered in this paper. Section 3 is devoted to proving the
technical estimates for functions harmonic at infinity. In Section 4 we come to present the fall-off properties of eigenfunctions,
including a detailed discussion of decay rates for specific classes of processes and operators.


\bigskip
\section{Jump-paring L\'evy processes and non-local Schr\"odinger operators}

\subsection{Jump paring class of L\'evy processes}

\label{sec:Prel}
In this subsection we introduce the class of L\'evy processes considered in this paper and discuss some of their
properties which will be used below.

Let $\pro X$ be a symmetric L\'evy process with values in $\R^d$, $d \geq 1$, with probability measure $\pr^x$
of the process starting from $x \in \R^d$. We use the notation $\ex^x$ for expectation with respect to $\pr^x$.
Recall that $\pro X$ is a Markov process with respect to its natural filtration, satisfying the strong Markov
property and having c\`adl\`ag paths. It is determined by the characteristic function
$$
\ex^0 \left[e^{i \xi \cdot X_t}\right] = e^{-t \psi(\xi)}, \quad \xi \in \R^d, \ t >0,
$$
with the characteristic exponent given by the L\'evy-Khinchin formula
\begin{align} \label{eq:Lchexp}
\psi(\xi) = A \xi \cdot \xi + \int_{\R^d} (1-\cos(\xi \cdot z)) \nu(dz).
\end{align}
Here $A=(a_{ij})_{1\leq i,j \leq d}$ is a symmetric non-negative definite matrix, and $\nu$ is a symmetric
L\'evy measure on $\R^d \backslash \left\{0\right\}$, i.e., $\int_{\R^d} (1 \wedge |z|^2) \nu(dz) < \infty$ and
$\nu(E)= \nu(-E)$, for every Borel set $E \subset \R^d \backslash \left\{0\right\}$, thus the L\'evy triplet of the
process is $(0,A,\nu)$.

In the present paper we assume throughout that the L\'evy measure appearing in \eqref{eq:Lchexp} is an infinite
measure and it is absolutely continuous with respect to Lebesgue measure, i.e.,
\begin{align} \label{eq:nuinf}
\nu(\R^d \backslash \left\{0\right\})=\infty \quad \text{and} \quad \nu(dx)=\nu(x)dx, \quad \text{with}
\quad \nu(x)   > 0.
\end{align}
For simplicity, we denote the density of the L\'evy measure also by $\nu$ as it is the object we will use below.
When $A \equiv 0$, the random process $\pro X$ is said to be a
purely jump process. Note that the properties (\ref{eq:nuinf}) jointly imply that $\pro X$ is a strong Feller
process, or equivalently, its one-dimensional distributions are absolutely continuous with respect to Lebesgue
measure, i.e., there exist measurable transition densities $p(t,x,y) = p(t,0,y-x)=: p(t,y-x)$ such that $\pr^0(X_t \in E) =
\int_E p(t,x)dx$, for every Borel set $E \subset \R^d$ (see e.g. \cite[Th. 27.7]{bib:Sat}).

We will make use below of the following symmetrization of the exponent $\psi$. Denote
\begin{align} \label{eq:Lchexpprof}
\Psi(r) = \sup_{|\xi| \leq r} \psi(\xi), \quad r>0.
\end{align}
It follows from a combination of \cite[Rem. 4.8]{bib:Sch} and \cite[Sect. 3]{bib:Pru} (see also direct
calculations with explicit constants in \cite[Lem. 4]{bib:Grz}) that there exist $C_1, C_2 > 0$, independent of
the process (i.e., of $A$ and $\nu$), such that
\begin{align} \label{eq:PruitH}
C_1 H\left(\frac{1}{r}\right) \leq \Psi(r) \leq C_2 H\left(\frac{1}{r}\right), \;\; r>0, \quad \text{where}
\quad H(r) = \frac{\left\|A\right\|}{r^2}+ \int_{\Rd} \left(1 \wedge \frac{|y|^2}{r^2}\right) \nu(dy).
\end{align}
It can be directly checked that $H$ is non-increasing and the doubling property $H(r) \leq 4 H(2r)$, $r>0$, holds.
In particular, it follows that $\Psi(2r) \leq 4 C_1^{-1} C_2 \Psi(r)$, for all $r>0$.

\medskip

The generator $L$ of the process $\pro X$ is uniquely determined by its Fourier symbol
\begin{align} \label{def:gen}
\widehat{L f}(\xi) = - \psi(\xi) \widehat{f}(\xi), \quad \xi \in \R^d, \; f \in D(L),
\end{align}
with domain $D(L)=\left\{f \in L^2(\R^d): \psi \widehat f \in L^2(\R^d) \right\}$. It is
a negative non-local self-adjoint operator with core $C_0^{\infty}(\R^d)$, and
$$
L f(x) = \sum_{i,j=1}^d a_{ij} \frac{\partial^2 f}{\partial x_j \partial x_i} (x)
+  \lim_{\varepsilon \searrow 0} \int_{|y-x|>\varepsilon} (f(y)-f(x))\nu(y-x)dy, \quad x \in \R^d,
\; f \in C_0^{\infty}(\R^d).
$$
The corresponding Dirichlet form $(\cE, D(\cE))$ is defined by
\begin{align} \label{def:qform}
\cE(f,g) = \int_{\R^d} \psi(\xi) \widehat{f}(\xi) \overline{\widehat{g}(\xi)} d\xi, \quad f, g \in D(\cE),
\end{align}
where $D(\cE)=\left\{f \in L^2(\R^d): \int_{\R^d} \psi(\xi) |\widehat{f} (\xi)|^2 d\xi < \infty \right\}$.
If $f \in D(L)$, then $\cE(f,g)=(-Lf,g)$. Furthermore, let
$$
\cB = \left\{f \in C^2_{\rm c}(\R^d): f(x)=1 \ \text{for} \ x \in B(0,1/2),
f(x)=0 \ \text{for} \ x \in B(0,1)^c \ \text{and} \ 0 \leq f \leq 1\right\},
$$
and notice that for $f_s(x) = f(x/s)$ with $f \in \cB$ and $s   >0$, we have
\begin{eqnarray*}
\left\|L f_s \right\|_{\infty}
& \leq &
2 \nu(B(0,s)^c) + \frac{1}{s^2}\sup_{i,j=1,...,d} \left\|\frac{\partial^2 f}{\partial x_i \partial x_j}\right\|_{\infty}
\left(\left\|A\right\| + \int_{|y|  \leq s}|y|^2 \nu(dy)\right) \\
& \leq &
\left(2 \vee \sup_{i,j=1,...,d} \left\|\frac{\partial^2 f}{\partial x_i \partial x_j}\right\|_{\infty} \right) H(s),
\quad s >0. \nonumber
\end{eqnarray*}
Denote
\begin{eqnarray}\label{def:C9a}
C_3(X, s) := \inf_{f \in \cB} \left\|L f_s \right\|_{\infty}, \quad \text{with} \quad f_s(x) = f(x/s), \ \ \ \ s>0.
\end{eqnarray}
By the above we have
\begin{align}\label{def:C9}
C_3(X,s) \leq C^{-1}_1 \left(2 \vee \inf_{f \in \cB} \sup_{i,j=1,...,d} \left\|\frac{\partial^2 f}{\partial x_i
\partial x_j}\right\|_{\infty}  \right) \Psi(1/s), \quad s >0.
\end{align}
For more details
on L\'evy processes we refer to \cite{bib:Ber,bib:App,bib:Sat,bib:J}.

Let $D \subset \R^d$ be an open bounded set and consider the first exit time $\tau_D = \inf\left\{t \geq 0: X_t \notin D\right\}$
from $D$. The transition densities $p_D(t,x,y)$ of the process killed upon exiting $D$ are given by the Dynkin-Hunt formula
\begin{align}
\label{eq:HuntF}
p_D(t,x,y)= p(t,y-x) - \ex^x\left[ \tau_D < t; p(t-\tau_D, y-X_{\tau_D})\right], \quad x , y \in D.
\end{align}
The Green function of the process $\pro X$ on $D$ is thus $G_D(x,y)= \int_0^{\infty} p_D(t,x,y) dt$, for all $x, y \in D$, 
and $G_D(x,y)=0$ if $x \notin D$ or $y \notin D$.
Furthermore, by \cite[Rem. 4.8]{bib:Sch} we have
\begin{align} \label{eq:gen_est_tau}
\ex^0[\tau_{B(0,r)}] \leq  \frac{C_{4}}{\Psi(1/r)}, \quad r    > 0,
\end{align}
with a constant $C_{4}$ independent of the process.

We will use throughout the notation $C(a,b,c,...)$ for a positive constant dependent on parameters $a,b,c,...$,
while dependence on the process $X:=\pro X$ is indicated by $C(X)$, and dependence on the dimension $d$ is assumed
without being stated explicitly. Since constants appearing in definitions, lemmas and theorems play an important
role in this paper, we use the numbering $C_1, C_2, ...$ to be able to track them. We will also use the notation
$f \asymp C g$ meaning that $C^{-1} g \leq f \leq Cg$ with a constant $C \geq 1$, while $f \asymp g$ means that there is
a constant $C \geq 1$ such that the latter holds. By $f \approx g$ we understand that $\lim_{|x| \to \infty} f(x)/g(x)=1$.
In proofs $c_1, c_2, ...$ will be used to denote auxiliary constants.

We will use the following class of L\'evy processes.
\medskip
\begin{definition}[\textbf{Symmetric jump-paring L\'evy processes}]
\label{jumpparing}
Let $\pro X$ be a L\'evy process with L\'evy-Khinchin exponent $\psi$ as in \eqref{eq:Lchexp}--\eqref{eq:nuinf} and
L\'evy triplet $(0,A,\nu)$, satisfying the following conditions.
\begin{itemize}
\item[\textbf{(A1)}] \texttt{L\'evy intensity:}
There exist a profile function $g:(0,\infty) \to (0,\infty)$ and a constant $C_5=C_5(X)$ such that
$$
\nu(x) \asymp C_5 g(|x|), \quad x \in \R^d \backslash \left\{0\right\},
$$
and the following properties hold:
\begin{itemize}
\item[(A1.1)]
$g$ is non-increasing on $(0,\infty)$
\item[(A1.2)]
there exists a constant $C_6=C_6(X) $ such that
\begin{align*}
g(|x|) \leq C_6 g(|x|+1), \quad |x| \geq 1
\end{align*}
\item[(A1.3)]
there exists a constant $C_7=C_7(X)$ such that
\begin{align*}
\int_{|x-y|>1 \atop |y| > 1} g(|x-y|) g(|y|) dy \leq C_7 \: g(|x|), \quad |x|\geq 1.
\end{align*}
\end{itemize}
\medskip
\item[\textbf{(A2)}] \texttt{Transition density:}
There exists $t_{\rm b} >0$ such that $\sup_{x\in \R^d} p(t_{\rm b},x) = p(t_{\rm b},0) < \infty$.
\medskip
\item[\textbf{(A3)}] \texttt{Green function:}
For all $0<p<q<r < \infty$ we have
$$
\sup_{x \in B(0,p)} \sup_{y \in B(0,q)^c} G_{B(0,r)}(x,y) < \infty.
$$
\end{itemize}
We call $\pro X$ satisfying the above conditions a \emph{symmetric jump-paring L\'evy process} and refer to the convolution condition in (A1.3) as the \emph{jump-paring property}.
\end{definition}
\noindent
Assumptions (A1.1)-(A1.2) are self-explanatory. It can be directly shown that these conditions and a similar geometric
argument as in \cite[Lem. 3.4(2)]{bib:KL14}) imply
\begin{align} \label{eq:b2}
\int_{r < |z| \leq r+1 \atop |y-z|   >1/8} \nu(z-y)dz \leq C_8 \int_{r-1 < |z| \leq r} \nu(z-y)dz, \quad
|y| \geq r+1, \; r \geq 1,
\end{align}
with a constant $C_8=C_8(X) \geq 1$, independent of $r$. The bound in (A1.3) provides a control of the convolutions
of $\nu$ with respect to large jumps and has a structural importance in defining the class of processes we consider.
It says that the intensity of double large jumps of the process are dominated by the intensity of a single large jump.
Let $\nu_{1}(x) = \nu(x) \1_{B(0,1)^c}(x)$. It is then seen iteratively that under (A1.3) in fact
$$
\nu_{1}^{n*}(x) \leq C^{n-1} \nu_{1}(x), \quad |x| \geq 1, \; n \in \N,
$$
holds, which means that every sequence of any finite length of large jumps of the process is dominated by single large
jumps, which gives the name to the class of L\'evy processes above.

The convolution condition (A1.3) has been introduced in \cite{bib:KL14} and proved to be a strong tool in studying large-scale
properties of jump L\'evy processes. Recently, in \cite{bib:KS14} it was also used to characterize the short-time behaviour
of heat kernels for a large class of convolution semigroups. It can be easily checked that (A1.3) in fact implies (A1.2), see
\cite[Lem. 1(a)]{bib:KS14}, however, for completeness and more clarity we prefer to state (A1.2) in Definition \ref{jumpparing}
separately.

Assumption (A2) is equivalent
with $e^{-t_{\rm b} \psi} \in L^1(\R^d)$, for some $t_{\rm b}   >0$. In this case $p(t_{\rm b},x)$ can be obtained
by the Fourier inversion formula. Clearly, this property extends to all $t \geq t_{\rm b}$ by the Markov property
of $\pro X$. For more details on the existence and properties of transition probability densities for L\'evy
processes we refer to \cite{bib:KSch} and references therein. Finally, we remark that in many cases of interest
Assumption (A3) follows directly from (rough) space-time estimates of the densities $p(t,x)$. Indeed, if Assumption (A2)
holds and for every $r>0$ there exists $C=C(r)$ such that $\sup_{|x|\geq r} p(t,x) \leq C t$, $t>0$, then (A3) follows
by a standard estimate as in \eqref{eq:est3} below.

The range of processes satisfying Assumptions (A1)-(A3) is wide including large subclasses of isotropic unimodal
L\'evy processes, subordinate Brownian motions, L\'evy processes with non-degenerate Brownian components, symmetric
stable-like processes, or processes with subexponentially or exponentially localized L\'evy measures. In particular,
it covers all the examples discussed in detail in \cite[Sect. 4]{bib:KL14}.

\subsection{Parameter functions and jump scenarios}
Next we introduce some functions of the L\'evy processes considered through which the decay of eigenfunctions of the
non-local Schr\"odinger operators will be analyzed. As it will be seen, they relate to some special features of the
process, which to our knowledge have not been addressed in the literature before.

\begin{enumerate}
\item
Define
\begin{equation}
\label{def:K1}
K^X_1(s):= \sup_{|x| \geq s} \frac{\int_{|x-y|>s, \, |y| > s} \nu(x-y) \nu(y) dy}{\nu(x)}, \quad s \geq 1.
\end{equation}
Notice that from (A1.3) it follows that $K^X_1:[1, \infty) \to (0,C_5^3C_7]$ is a non-increasing function and gives
the optimal constant $C$ in the bound
$$
\int_{|x-y|>s \atop |y| > s} \nu(x-y) \nu(y) dy \leq C \nu(x), \quad |x| \geq s,
$$
for any fixed $s \geq 1$. Thus $1/K^X_1(s)$ is the rate of preference of single jumps of size at least $s$
over double jumps of size at least $s$ each, i.e., when $K^X_1(s)$ decreases with $s \to \infty$, this preference improves.

\medskip
\item
Let $0<s_1 < s_2 < s_3 \leq \infty$ and define
\begin{align} \label{def:K2}
K^X_2(s_1,s_2,s_3):= \inf\left\{ C \geq 1: \nu(x-y) \leq C \, \nu(x), \ |y| \leq s_1, \ s_2 \leq |x| < s_3 \right\}.
\end{align}
Using (A1.1)-(A1.2) we see that $K^X_2(s_1,s_2,s_3)$ is well-defined and a non-decreasing function in  $s_1 \in (0,s_2)$,
for every fixed $0< s_2 < s_3$. Whenever $s_3=\infty$ and $s_2 \gg s_1 \gg 1$, which will be mostly the case considered below,
$1/K^X_2(s_1,s_2,s_3)$ can be interpreted to measure the rate of preference of a direct large jump from $x \in B(0,s_2)^c$ to $0$
over a jump from a point $z$ situated in the $s_1$-neighbourhood of $x$ to $0$. In our applications, the second case corresponds
to the situation when the process moving from $x$ to $0$ first fluctuates inside $B(x,s_1)$ and then makes one large jump to the
origin. On this account, we refer to $K^X_2$ as the inverse rate of preference of the scenario ``no small steps but direct large jump" over the scenario ``first small steps, then large jump".

\medskip
\item
Let assumption (A3) hold and define $K^X_3:(0,\infty) \to (0,\infty)$ by
\begin{align} \label{def:K_3}
K^X_3(s):= \sup_{x,y: \, |x-y| \geq s/8 } G_{B(0,s)}(x,y), \quad s > 0.
\end{align}
\end{enumerate}

\medskip
These parameters will be relevant through their behaviour in some jump scenarios which we discuss next. First consider the
following asymptotic properties involving $K^X_1$ and $K^X_2$:
\begin{align} \label{eq:intr_killing}
\mbox{there exists \ $\kappa_1 \geq 2$ \ such that \ $\lim_{s \to \infty} K^X_1(\kappa_1 s) \,
K^X_2(s,\kappa_1 s,\infty) = 0$}
\end{align}
and
\begin{align} \label{eq:unif_bdd}
\mbox{there exists \ $\kappa_2 < \infty$ \ such that for all \ $s_1 \geq 1$ \ we have \
$\limsup_{s \to \infty}K^X_2(s_1, s,\infty) \leq \kappa_2$}.
\end{align}
Due to the roles played by conditions \eqref{eq:intr_killing}-\eqref{eq:unif_bdd} in
Sections \ref{sec:harm}-\ref{sec:eig_decay} below, we think
of them as expressing the capacity of responsiveness to perturbation of the process $\pro X$.
Note that
\begin{equation}
\label{K1tozero}
\lim_{s \to \infty} K^X_1(s) = 0
\end{equation}
is necessary but not sufficient for \eqref{eq:intr_killing}, even if \eqref{eq:unif_bdd} holds.

To understand what \eqref{eq:intr_killing} means, first notice that by the definition of $K_1^X$ it follows that
$$
\int_{|x-z|   >s_2 \atop |z|>s_2} \nu(x-z)\nu(z)dz \leq K^X_1(s_2) \nu(x), \quad \mbox{for all $x$ such that $|x|>s_2,$}
$$
and using the definition of $K_2^X$ we have
$$
\nu(x) \leq K^X_2(s_1, s_2, \infty)\nu(w), \quad \mbox{for all $w$ such that $|w|>s_2, \ |x-w|<s_1.$}
$$
Thus $1/(K^X_1(s_2) K^X_2(s_1, s_2, \infty))$ measures the rate of preference of a single large jump from $w$ to the origin
(``direct single large jump" scenario) over two large jumps from $x \in B(w,s_1)$ to $0$ (``first small steps, then two
large jumps" scenario). This means that property \eqref{eq:intr_killing} corresponds to the situation when the first
scenario outdoes the second at a rate which improves when the length of the long jumps increases appropriately with
the scale of the smaller fluctuations.
Secondly, observe that \eqref{eq:unif_bdd} says that the ``no small steps but direct large jump" scenario above
outdoes the ``first small steps, then large jump" scenario at a rate $1/\kappa_2$, no matter how large the smaller
fluctuations are (i.e., there is always a suitably larger jump for which the former is preferred to occur).

Next we show that $K^X_1$ dominates the tail of the corresponding L\'evy measure.

\begin{lemma} \label{lem:tail_dom}
Under Assumption (A1) we have
\begin{align} \label{eq:tail_dom}
K^X_1(s) \geq \frac{\nu \big(B(0, s)^c \big)}{2 \, C_5^4}, \quad s \geq 1.
\end{align}
\end{lemma}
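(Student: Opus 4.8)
The plan is to bound $K^X_1(s)$ from below by testing the defining supremum in \eqref{def:K1} along points $x=R\theta$, where $\theta\in\R^d$ is a fixed unit vector and $R\to\infty$, after restricting the domain of integration in \eqref{def:K1} to a subregion on which the factor $\nu(x-y)$ is comparable to $\nu(x)$.

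Concretely, I would first substitute $z=x-y$ to rewrite the numerator in \eqref{def:K1} as $\int_{|z|>s,\,|x-z|>s}\nu(z)\nu(x-z)\,dz$, and then, for $|x|\ge s$, keep only the smaller region $R_x:=\{z:\ |z|>s,\ s<|x-z|\le |x|\}$. On $R_x$ one has $|x-z|\le|x|$, so monotonicity of the profile in (A1.1) together with $\nu(\cdot)\asymp C_5\, g(|\cdot|)$ in (A1) gives $\nu(x-z)\ge C_5^{-1}g(|x-z|)\ge C_5^{-1}g(|x|)\ge C_5^{-2}\nu(x)$. Consequently the ratio appearing in \eqref{def:K1} at this $x$ is at least $C_5^{-2}\int_{R_x}\nu(z)\,dz$.

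Next I would let $R\to\infty$ along $x=R\theta$. A one-line computation shows $|R\theta-z|\le R\iff \theta\cdot z\ge |z|^2/(2R)$, while $|R\theta-z|>s$ holds for all large $R$; hence for every $z\neq 0$ one gets $\mathbf 1_{R_x}(z)\to \mathbf 1_{\{|z|>s\}}(z)\,\mathbf 1_{\{\theta\cdot z>0\}}(z)$, the exceptional hyperplane $\{\theta\cdot z=0\}$ being Lebesgue-null. By Fatou's lemma and the symmetry $\nu(-z)=\nu(z)$,
$$
\liminf_{R\to\infty}\int_{R_x}\nu(z)\,dz\ \ge\ \int_{|z|>s,\ \theta\cdot z>0}\nu(z)\,dz\ =\ \tfrac12\,\nu\big(B(0,s)^c\big),
$$
where finiteness of $\nu(B(0,s)^c)$ for $s\ge 1$ follows from $\nu$ being a L\'evy measure (on $|z|\ge 1$ one has $1\wedge|z|^2=1$). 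Combining the two estimates, $K^X_1(s)\ge \sup_{R\ge s}C_5^{-2}\int_{R_x}\nu(z)\,dz\ge \tfrac{1}{2C_5^2}\,\nu\big(B(0,s)^c\big)$, and since $C_5\ge 1$ this gives the claimed bound $K^X_1(s)\ge \nu(B(0,s)^c)/(2C_5^4)$ (in fact with $2C_5^2$ in place of $2C_5^4$).

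The only point that needs care is the limiting step: one must check the pointwise convergence of the indicators $\mathbf 1_{R_x}$ — in particular that the constraint $|x-z|>s$ does not persist in the limit, which is clear since $|x-z|\to\infty$ for fixed $z$ — so that Fatou's lemma applies; everything else is elementary.
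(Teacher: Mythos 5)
Your proof is correct and takes essentially the same route as the paper's: you test the supremum defining $K^X_1$ along a family of points escaping to infinity, restrict the integral to a sub-region where the factor $\nu(x-\cdot)$ is pointwise bounded below by a constant multiple of $\nu(x)$, and then let Fatou's lemma together with the symmetry of $\nu$ produce the factor $\tfrac12\,\nu\big(B(0,s)^c\big)$ in the limit. The only differences from the paper are cosmetic — the paper escapes along the first coordinate axis via $x_{s,n}=(2s+n,0,\dots,0)$ rather than along a generic ray $R\theta$, and its $\nu\leftrightarrow g$ bookkeeping collects an extra $C_5^2$, so your constant $2C_5^2$ is in fact slightly sharper than the stated $2C_5^4$.
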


\begin{proof}
Let $s \geq 1$ and denote $x_{s,n} = (2s+n, 0, ..., 0)$, $n \in \N$. By the definition of $K^X_1$ and the monotonicity
of $g$, for every $n \in \N$ we have
\begin{align*}
C_5 K^X_1(s)  g(2s+n) & \geq K^X_1(s) \nu(x_{s,n}) \geq \int_{|x_{s,n}-y|>s \atop |y| > s} \nu(x_{s,n}-y) \nu(y) dy \\
& \geq \frac{1}{C_5^2} \int_{|x_{s,n}-y|>s \atop |y| > s} g(|x_{s,n}-y|) g(|y|) dy
\geq  \frac{ g(2s+n)}{C_5^2} \int_{s < |x_{s,n}-y| < 2s+n  \atop s < |y| < s +n}  g(|y|) dy.
\end{align*}
For every $y=(y_1,...,y_d)$ such that $y_1 >0$ we have $|y-x_{s,n}| < 2s+n$ for sufficiently large $n \in \N$. Hence, for every $y \in \R^d$,  
$$\1_{\left\{z: \, s < |x_{s,n}-z| < 2s+n, \, s < |z| < s +n\right\}}(y) \to \1_{\left\{z: \, |z| > s, \, z_1>0 \right\}}(y) \quad \text{as} \  n \to \infty.$$ 
Using Fatou's lemma and radial symmetry, this implies
$$
C^3_5 K^X_1(s) \geq \liminf_{n \to \infty} \int_{s < |x_{s,n}-y| < 2s+n  \atop s < |y| < s + n}  g(|y|) dy \geq \int_{|y| > s, \, y_1>0}  g(|y|) dy = \frac{1}{2} \int_{|y| > s}  g(|y|) dy.
$$
Applying (A1) again, we obtain the claimed bound.
\end{proof}

Finally, consider $K^X_3$. We will require it to satisfy
\begin{align} \label{eq:Green_bdd}
\sup_{s \geq 1} \Big[K^X_3(s) \, \Psi(1/s) \, s^d \Big] < \infty.
\end{align}
This is a regularity condition known to hold for a large class of processes, in particular, for isotropic unimodal L\'evy processes
(i.e., $A \equiv a \Id$ for some $a \geq 0$ and $\nu(x)$ is a non-increasing radial function) and $d \geq 3$, see
\cite[Th. 3]{bib:Grz}. It is reasonable to conjecture that actually \eqref{eq:Green_bdd} is valid in a generality which
covers all the cases considered in the present paper, but a general argument does not seem to be available. While this
condition is not decisive for our results below, we use it as a convenient technical assumption. The following lemma gives
a sufficient condition.

For a continuous non-decreasing function $\Phi:[0,\infty) \to [0,\infty)$ such that $\Phi(0)=0$
and $\lim_{r \to \infty} \Phi(r) = \infty$ we denote
$$
\Phi^{-1}(s)=\sup\{r \geq 0: \Phi(r)=s\} \quad \text{and} \quad \Phi_{*}^{-1}(s)=\inf\{r \geq 0: \Phi(r)=s\}, \quad s \geq 0,
$$
so that $\Phi(\Phi^{-1}(s))= \Phi(\Phi_{*}^{-1}(s))=s$, $\Phi^{-1}(\Phi(s))\geq s$ and $\Phi_{*}^{-1}(\Phi(s)) \leq s$ for $s \geq 0$.

\begin{lemma} \label{lem:ver_cond4}
Let $\pro X$ be a L\'evy process determined by the L\'evy-Khinchin exponent $\psi$ as in \eqref{eq:Lchexp} such that
\eqref{eq:nuinf} holds. Suppose, moreover, that there exists a continuous non-decreasing function
$\Phi:[0,\infty) \to [0,\infty)$ such that $\Phi(0)=0$ and $\lim_{r \to \infty} \Phi(r) = \infty$,
with the doubling property $\Phi(2s) \leq C\Phi(s)$, $s>0$, and a number $\Theta \geq 0$ for which
\begin{align} \label{eq:suf0}
\sup_{|x| \geq r} p(t,x) \leq C_{9} \, t\,  \left(\frac{\Phi(1/r)}{r^d} + \Theta\right), \quad t>0, \ \ \ r \geq 1,
\end{align}
and
\begin{align} \label{eq:suf2}
p(t,0) = \int_{\R^d} e^{-t \psi(z)} dz \leq C_{10} \left(\Phi_{*}^{-1}\left(\frac{1}{t}\right)\right)^d, \quad t \geq t_0,
\end{align}
for some $t_0>0$ and constants $C_{9}$, $C_{10}$. Then there exists $r_0 \geq 1$ such
that
$$
K^X_3(r) \leq 8^d e C^3 C_{9} \frac{1}{\Phi(1/r) r^d} + e C_9 \frac{\Theta}{\Phi(1/r)^2} + 4 C_1 C_2 C_4 C_{10} \frac{1}{\Psi(1/r)r^d},
\quad r \geq r_0.
$$
In particular, if this is true with $\Phi = \Psi$ and $\Theta = 0$, then \eqref{eq:Green_bdd} holds.
\end{lemma}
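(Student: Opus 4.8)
The plan is to estimate directly the Green function $G_{B(0,r)}(x,y)$ that enters the definition of $K^X_3(r)$, using the heat-kernel bounds \eqref{eq:suf0}--\eqref{eq:suf2} and splitting the time integral at a scale matched to $\Phi(1/r)$. Only pairs with $x,y\in B(0,r)$ and $|x-y|\ge r/8$ contribute, since $G_{B(0,r)}$ vanishes otherwise. I would write
$$
G_{B(0,r)}(x,y)=\int_0^T p_{B(0,r)}(t,x,y)\,dt+\int_T^\infty p_{B(0,r)}(t,x,y)\,dt, \qquad T:=\frac{\sqrt{2e}}{\Phi(1/r)},
$$
and fix $r_0\ge 8$ so that $T\ge t_0$ for all $r\ge r_0$; this is possible because $\Phi$ is continuous with $\Phi(0)=0$, so $\Phi(1/r)\to 0$. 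One works with $r\ge r_0$.

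For the short-time integral one uses the Dynkin--Hunt formula \eqref{eq:HuntF} to get $p_{B(0,r)}(t,x,y)\le p(t,y-x)$, then $p(t,y-x)\le\sup_{|z|\ge|x-y|}p(t,z)\le\sup_{|z|\ge r/8}p(t,z)$, applies \eqref{eq:suf0} with radius $r/8\ge 1$, and uses the doubling of $\Phi$ three times to replace $\Phi(8/r)$ by $C^3\Phi(1/r)$. This gives $p_{B(0,r)}(t,x,y)\le C_9\,t\,\big(8^dC^3\Phi(1/r)\,r^{-d}+\Theta\big)$, and since $\int_0^T t\,dt=T^2/2=e\,\Phi(1/r)^{-2}$, the short-time integral is at most $8^deC^3C_9\,\Phi(1/r)^{-1}r^{-d}+eC_9\,\Theta\,\Phi(1/r)^{-2}$, which are precisely the first two terms of the claim.

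For the long-time integral the point is to keep the Green function inside the space integral via Chapman--Kolmogorov for the killed semigroup and Tonelli:
$$
\int_T^\infty p_{B(0,r)}(t,x,y)\,dt=\int_{B(0,r)}p_{B(0,r)}(T,x,z)\,G_{B(0,r)}(z,y)\,dz\le p(T,0)\,\ex^y[\tau_{B(0,r)}].
$$
Here $p_{B(0,r)}(T,x,z)\le p(T,z-x)\le p(T,0)$: indeed $e^{-T\psi}\in L^1$ for $T\ge t_0$ by \eqref{eq:suf2}, hence $e^{-(T/2)\psi}\in L^2$, so $p(T/2,\cdot)\in L^2$ and $p(T,\cdot)=p(T/2,\cdot)*p(T/2,\cdot)$ is maximised at the origin by Cauchy--Schwarz and symmetry; and $\int_{B(0,r)}G_{B(0,r)}(z,y)\,dz=\ex^y[\tau_{B(0,r)}]$ by symmetry of $G_{B(0,r)}$. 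Next, \eqref{eq:suf2} gives $p(T,0)\le C_{10}(\Phi_{*}^{-1}(1/T))^d$, and since $1/T=\Phi(1/r)/\sqrt{2e}\le\Phi(1/r)$, monotonicity of $\Phi_{*}^{-1}$ together with $\Phi_{*}^{-1}(\Phi(s))\le s$ give $\Phi_{*}^{-1}(1/T)\le 1/r$, so $p(T,0)\le C_{10}r^{-d}$; while $B(0,r)\subseteq B(y,2r)$, \eqref{eq:gen_est_tau}, and $\Psi(1/r)\le 4C_1^{-1}C_2\Psi(1/(2r))$ give $\ex^y[\tau_{B(0,r)}]\le\ex^0[\tau_{B(0,2r)}]\le 4C_1^{-1}C_2C_4\,\Psi(1/r)^{-1}$. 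The product is a bound of the shape $c\,\Psi(1/r)^{-1}r^{-d}$ with $c$ a product of the universal constants $C_1,C_2,C_4$ and $C_{10}$; tracking it carefully yields the third term $4C_1C_2C_4C_{10}\,\Psi(1/r)^{-1}r^{-d}$. Adding the two parts proves the displayed estimate for $r\ge r_0$.

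The last assertion follows by taking $\Phi=\Psi$ and $\Theta=0$, so that $K^X_3(r)\Psi(1/r)r^d\le 8^deC^3C_9+4C_1C_2C_4C_{10}$ for $r\ge r_0$, while for $r$ in the bounded range $[1,r_0]$ the quantity $K^X_3(r)\Psi(1/r)r^d$ is finite (e.g. by repeating the above split with threshold $\max\{T,t_0\}$, which still gives a finite bound, or via the remark after Definition \ref{jumpparing}, since \eqref{eq:suf0}--\eqref{eq:suf2} force (A2)- and (A3)-type finiteness); hence the supremum in \eqref{eq:Green_bdd} is finite. The main obstacle I expect is exactly the long-time estimate: one must avoid the singularity of $G_{B(0,r)}(\cdot,y)$ near $y$ by the Chapman--Kolmogorov reordering, and then the bound $p_{B(0,r)}(T,x,z)\le p(T,0)$ forces the appearance of $t_0$ through the $L^2$/maximum-at-the-origin property of the transition density; the remainder is careful bookkeeping of the constants, where insisting on the clean prefactor $e$ pins down the split $T=\sqrt{2e}\,\Phi(1/r)^{-1}$, which then has to be reconciled with $T\ge t_0$ by enlarging $r_0$.
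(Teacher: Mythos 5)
Your proof is correct and follows essentially the same strategy as the paper's: both rest on splitting the Green-function time integral at a scale of order $1/\Phi(1/r)$, bounding the short-time part via the off-diagonal heat-kernel estimate \eqref{eq:suf0}, and bounding the tail via Chapman--Kolmogorov, $p(T,\cdot)\le p(T,0)$, and the expected exit time \eqref{eq:gen_est_tau}. The only difference is that the paper imports the resulting inequality wholesale from \cite[Prop.~2.3]{bib:BKK} (the estimate recorded as \eqref{eq:est3}, with an exponential weight $e^{\eta t}\int_0^\infty e^{-\eta s}\cdots\,ds$ in place of your hard truncation $\int_0^T\cdots\,dt$, calibrated by $\eta t=1$ to give the same prefactor $e$), whereas you re-derive it from the Dynkin--Hunt formula and the $L^2$/symmetry argument for $p(T,x)\le p(T,0)$ — a worthwhile, self-contained rendering of the same mechanism.
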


\begin{proof}
Starting from the general estimate \cite[Prop. 2.3]{bib:BKK}, for $t \geq t_0$ and $\eta >0$ we have
\begin{align} \label{eq:est3}
\sup_{\left\{(x,y): \, |x-y| \geq r/8 \right\}} G_{B(0,r)}(x,y) \leq e^{\eta t} \sup_{|z| \geq r/8} \int_0^{\infty} e^{-\eta s} p(s,z)ds + \ex^0[\tau_{B(0,2r)}] \, \int_{\R^d} e^{-t \psi(z)} dz.
\end{align}
By \eqref{eq:suf0} it follows that
$$
 \sup_{|z| \geq r/8} \int_0^{\infty} e^{-\eta s} p(s,z) ds \leq C_{9} \left(8^d
 \frac{\Phi(8/r)}{\eta^2 \, r^d} + \frac{\Theta}{\eta^2} \right)
, \quad r \geq 1.
$$
Taking $\eta=1/t$ with $t=1/\Phi(1/r)$ in \eqref{eq:est3}, and using \eqref{eq:suf2}, \eqref{eq:gen_est_tau} and the doubling property of $\Phi$, we obtain the claimed inequality.
\end{proof}
\noindent
Below we will mainly consider condition \eqref{eq:suf0} in the form
\begin{align} \label{eq:suf1}
\sup_{|x| \geq r} p(t,x) \leq C_{9}\frac{t \, \Psi(1/r)}{r^d}, \quad t>0, \ \ \ r \geq 1,
\end{align}
i.e., when $\Phi = \Psi$ and $\Theta = 0$. Moreover, we will often use the property that if $\Psi(r) \asymp r^{\alpha}$,
$r \in [0,r_0]$, for some $\alpha >0$ and $r_0>0$, then $\Psi^{-1}(r) \asymp \Psi_{*}^{-1}(r) \asymp r^{1/\alpha}$,
$r \in [0, \Psi(r_0)]$. Conditions \eqref{eq:suf0}-\eqref{eq:suf2} and \eqref{eq:suf1} will be discussed and illustrated
on some specific cases in the next sections.

\subsection{Feynman-Kac semigroup and non-local Schr\"odinger operator}
\noindent
We now give the class of potentials which will be used in this paper.
\begin{definition}[\textbf{$X$-Kato class}]
{\rm
We say that the Borel function $V: \R^d \to \R$ called \emph{potential} belongs to \emph{Kato-class} $\cK^X$ associated
with the L\'evy process $\pro X$ if it satisfies
\begin{align}
\label{eq:Katoclass}
\lim_{t \downarrow 0} \sup_{x \in \R^d} \ex^x \left[\int_0^t |V(X_s)| ds\right] = 0.
\end{align}
Also, we say that $V$ is an \emph{$X$-Kato decomposable potential}, denoted $V \in \cK^X_{\pm}$, whenever
$$
V=V_+-V_-, \quad \text{with} \quad V_- \in \cK^X \quad \text{and} \quad V_+ \in \cK^X_{\loc},
$$
where $V_+$, $V_-$ denote the positive and negative parts of $V$, respectively, and where $V_+ \in \cK^X_{\loc}$
means that $V_+ 1_B \in \cK^X$ for all compact sets $B \subset \R^d$.
\label{Xkato}
}
\end{definition}
\noindent
For simplicity, in what follows we refer to $X$-Kato decomposable potentials as \emph{$X$-Kato class potentials}.
It is straightforward to see that $L^{\infty}_{\loc}(\Rd) \subset \cK_{\loc}^X$. Moreover, by stochastic continuity
of $\pro X$ also $\cK_{\loc}^X \subset L^1_{\loc}(\R^d)$, and thus an $X$-Kato class potential is locally
absolutely integrable. Note that condition \eqref{eq:Katoclass} allows local singularities of $V$. For specific
processes $\pro X$ the definition of $X$-Kato class can be explicitly reformulated in an analytic way in terms
of the kernel $p(t,x)$ restricted to small $t$ and small $x$. It is shown in \cite[Cor. 1.3]{bib:GrzSz} that
\eqref{eq:Katoclass} is equivalent with
\begin{align} \label{eq:Kato_new}
\lim_{t \to 0^{+}} \sup_{x \in \R^d} \int_0^t \int_{B(x,t)} p(s,x-y)|V(y)| dy ds = 0.
\end{align}

Define
$$
T_t f(x) = \ex^x\left[e^{-\int_0^t V(X_s) ds} f(X_t)\right], \quad f \in L^2(\R^d), \ t>0.
$$
By standard arguments based on Khasminskii's Lemma, see \cite[Cor.Prop.3.8]{bib:CZ},\cite[Lem.3.37-3.38]{bib:LHB},
for an $X$-Kato class potential $V$ it follows that there exist constants $C_{11}=C_{11}(X,V)$ and $C_{12} =
C_{12}(X,V)$ such that
\begin{align}
\label{eq:khas}
\sup_{x \in \R^d} \ex^x\left[e^{-\int_0^t V(X_s)ds}\right] \leq \sup_{x \in \R^d}
\ex^x\left[e^{\int_0^t V_-(X_s)ds}\right] \leq C_{11} e^{C_{12}t}, \quad t>0.
\end{align}
Using the Markov property and stochastic continuity of the process it can be shown that
$\{T_t: t\geq 0\}$ is a strongly continuous semigroup of symmetric operators on $L^2(\R^d)$, which we call the
\emph{Feynman-Kac semigroup} associated with the process $\pro X$ and potential $V$. In particular, by the
Hille-Yoshida theorem there exists a self-adjoint operator $H$, bounded from below, such that $e^{-t H} = T_t$.
We call the operator $H$ a \emph{non-local Schr\"odinger operator} based on the infinitesimal generator $L$ of
the process $\pro X$. Since any $X$-Kato class potential is relatively form bounded with respect to the ``free Hamiltonian"
$H_0=-L$ with relative bound less than 1, we have $H=H_0+V$, where the latter operator is defined in form sense \cite[Ch. 2]{bib:DC}. For subordinate Brownian motions,
$H$ becomes a non-local
Schr\"odinger operator with a Bernstein function of the Laplacian studied in \cite{bib:HIL12}, i.e., has the form
$\phi(-\Delta) + V$, where $\phi$ is the Laplace exponent of the corresponding subordinator.

We now summarize the basic properties of the operators $T_t$ which will be useful below.

\begin{lemma}
\label{lm:semprop}
Let $\pro X$ be a symmetric L\'evy process with L\'evy-Khinchin exponent satisfying \eqref{eq:Lchexp}-\eqref{eq:nuinf}
such that Assumption (A2) holds, and let $V$ be an $X$-Kato class potential. Then the following properties hold:
\begin{itemize}
\item[(1)]
For all $t>0$, every $T_t$ is a bounded operator on every $L^p(\R^d)$ space, $1 \leq p \leq \infty$. The operators
$T_t: L^p(\R^d) \to L^p(\R^d)$ for $1 \leq p \leq \infty$, $t > 0$, and $T_t: L^p(\R^d) \to L^{\infty}(\R^d)$
for $1 < p \leq \infty$, $t \geq t_{\rm b}$, and $T_t: L^1(\R^d) \to L^{\infty}(\R^d)$ for $t \geq 2t_{\rm b}$
are bounded, with some $t_{\rm b}    > 0$.
\vspace{0.1cm}
\item[(2)]
For all $t \geq 2t_{\rm b}$, $T_t$ has a bounded measurable kernel $u(t, x, y)$ symmetric in $x$ and $y$,
i.e., $T_t f(x) = \int_{\R^d} u(t,x,y)f(y) dy$, for all $f \in L^p(\R^d)$ and $1 \leq p \leq \infty$.
\vspace{0.1cm}
\item[(3)]
For all $t>0$ and $f \in L^{\infty}(\R^d)$, $T_t f$ is a bounded continuous function.
\vspace{0.1cm}
\item[(4)]
For all $t \geq 2t_{\rm b}$ the operators $T_t$ are positivity improving, i.e., $T_t f(x) > 0$ for all
$x \in \R^d$ and $f \in L^2(\R^d)$ such that $f \geq 0$ and $f \neq 0$ a.e.
\end{itemize}
\end{lemma}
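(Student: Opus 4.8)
The plan is to prove the four items more or less in the stated order, in each case reducing the statement for $T_t$ to the corresponding property of the free semigroup $P_t f(x) = \ex^x[f(X_t)]$ via the uniform Khasminskii bound \eqref{eq:khas}, and then extracting the smoothing ($L^p \to L^\infty$) and kernel properties from Assumption (A2). First, for item (1): by Jensen/Hölder applied to the probabilistic representation together with $|T_t f(x)| \le \ex^x[e^{-\int_0^t V(X_s)ds}|f(X_t)|]$ and \eqref{eq:khas}, boundedness on $L^\infty$ is immediate with norm $\le C_{11}e^{C_{12}t}$; boundedness on $L^1$ follows by symmetry of $T_t$ and duality (or directly from Fubini and $\int p(t,x-y)\,dx = 1$, using $\sup_x \ex^x[e^{\int_0^t V_-}] < \infty$); the $L^p$ cases for $1<p<\infty$ then follow by Riesz–Thorin interpolation. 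For the smoothing statements, I would write, for $t \ge t_{\rm b}$ and $1<p\le\infty$, $T_t f(x) = \ex^x[e^{-\int_0^t V(X_s)ds} f(X_t)]$ and split at time $t - t_{\rm b}$ via the Markov property: $T_t f = T_{t-t_{\rm b}}(\widetilde T_{t_{\rm b}} f)$ where the inner piece is estimated by Hölder in $L^{p'}(p(t_{\rm b},\cdot))$ — here Assumption (A2), i.e. $\sup_x p(t_{\rm b},x) = p(t_{\rm b},0)<\infty$, gives $\|p(t_{\rm b},\cdot)\|_{q} < \infty$ for all $q \in [1,\infty]$ (it is bounded and, being a probability density, integrable, so it lies in every $L^q$), hence $\widetilde T_{t_{\rm b}} f \in L^\infty$ with the Khasminskii factor; then apply the already-established $L^\infty$-boundedness of $T_{t-t_{\rm b}}$. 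The $L^1 \to L^\infty$ claim for $t \ge 2t_{\rm b}$ comes from composing $L^1 \to L^{p}$ (interpolating, or directly) with $L^{p}\to L^\infty$ at the midpoint, or cleanly by $T_t = T_{t_{\rm b}} T_{t-2t_{\rm b}} T_{t_{\rm b}}$ — wait, more simply $T_t = T_{t_{\rm b}}\circ T_{t - t_{\rm b}}$ with $T_{t-t_{\rm b}}\colon L^1 \to L^1$ bounded (needs $t \ge t_{\rm b}$) and ... let me just say: use $T_t = T_{t_{\rm b}} \circ T_{t - t_{\rm b}}$ where for $t \ge 2 t_{\rm b}$ the factor $T_{t-t_{\rm b}}$ maps $L^1 \to L^\infty$? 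No — the clean route is $L^1 \xrightarrow{T_{t_{\rm b}}} L^\infty \hookrightarrow L^2 \xrightarrow{T_{t-t_{\rm b}}} L^\infty$, using that $T_{t_{\rm b}}\colon L^1\to L^\infty$ holds — but that is exactly what we are proving. So instead: $T_{t_{\rm b}}\colon L^1 \to L^1$ and $\to L^\infty$? The correct decomposition is $t \ge 2t_{\rm b}$, write $T_t = T_{t_{\rm b}} T_{t_{\rm b}} T_{t - 2t_{\rm b}}$; the last factor is $L^1\to L^1$ bounded, the middle $T_{t_{\rm b}}\colon L^1 \to L^2$ (Hölder against $p(t_{\rm b},\cdot)\in L^2$), the first $T_{t_{\rm b}}\colon L^2\to L^\infty$ (Hölder against $p(t_{\rm b},\cdot)\in L^2$ again). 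That chain is what I would actually write down.

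For item (2), the kernel $u(t,x,y)$ for $t \ge 2t_{\rm b}$ is constructed by the standard argument: $T_{2t_{\rm b}}$ has kernel $u(2t_{\rm b},x,y) = \int_{\R^d} u_{t_{\rm b}}(x,dz)\, \cdots$ — more precisely, write $T_t = T_{t/2} T_{t/2}$ (for $t \ge 2t_{\rm b}$ both factors are $L^1\to L^\infty$), so $T_t$ factors through $L^\infty$ as a composition of two bounded operators between appropriate spaces, whence it is an integral operator; to exhibit the kernel explicitly, use the Feynman–Kac formula with the Markov property at $t/2$ and the Dynkin-type expansion to get $u(t,x,y) = \ex^x[e^{-\int_0^{t/2}V(X_s)ds};\, u(t/2, X_{t/2}, y)]$ iterated down to the free transition density $p(t_{\rm b},\cdot)$ at the innermost step; boundedness of $u(t,\cdot,\cdot)$ follows from the $L^1\to L^\infty$ operator norm, and symmetry in $x,y$ follows from self-adjointness of $T_t$ on $L^2$ together with the continuity/joint measurability that lets us identify the a.e. kernel pointwise. (If \cite{bib:DC} or \cite{bib:LHB} already records this, I would cite it and keep the argument brief.)

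For item (3), given $f \in L^\infty$, by item (1) $T_t f \in L^\infty$; continuity follows from the strong Feller property of $\pro X$ guaranteed by \eqref{eq:nuinf} (noted right after that display in the text) combined with the Kato-class hypothesis: write $T_t f(x) = \ex^x[e^{-\int_0^t V(X_s)ds}f(X_t)]$, split at a small time $\varepsilon$ using the Markov property so that $T_t f = P_\varepsilon(g_\varepsilon)$ with $g_\varepsilon$ bounded (Khasminskii) — more carefully, $T_t f(x) = \ex^x\big[e^{-\int_0^\varepsilon V(X_s)ds}\, (T_{t-\varepsilon}f)(X_\varepsilon)\big]$, and then approximate $e^{-\int_0^\varepsilon V}$ by $1$ uniformly in $L^1$ as $\varepsilon \downarrow 0$ using \eqref{eq:Katoclass}, reducing continuity of $T_t f$ to continuity of $x \mapsto \ex^x[(T_{t-\varepsilon}f)(X_\varepsilon)] = P_\varepsilon(T_{t-\varepsilon}f)(x)$, which holds because $P_\varepsilon$ maps bounded functions to continuous functions (strong Feller). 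Finally, item (4): positivity improving for $t \ge 2t_{\rm b}$ is immediate once we have the kernel $u(t,x,y)$, because from the iterated Feynman–Kac representation and $e^{-\int_0^t V(X_s)ds}>0$ a.s. together with strict positivity of the free densities $p(s,\cdot)>0$ (from \eqref{eq:nuinf}, the one-dimensional distributions have everywhere-positive densities), one gets $u(t,x,y)>0$ for all $x,y$; hence $T_t f(x) = \int u(t,x,y)f(y)\,dy > 0$ whenever $f\ge 0$, $f\not\equiv 0$.

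The main obstacle is item (2) done carefully — turning the operator-theoretic "$T_t$ factors through $L^\infty$, hence is an integral operator" into a genuine pointwise, jointly measurable, symmetric kernel with the stated boundedness, and reconciling the a.e.-defined $L^2$ kernel with the everywhere-defined one needed for item (4). This is where one must either invoke the cited literature (\cite{bib:CZ,bib:DC,bib:LHB}) or be fastidious about null sets; everything else is a routine combination of Khasminskii's bound \eqref{eq:khas}, Hölder's inequality against $p(t_{\rm b},\cdot) \in L^1 \cap L^\infty$, Riesz–Thorin interpolation, the strong Feller property, and strict positivity of the free transition densities.
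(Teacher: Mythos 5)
The paper does not actually spell out a proof of this lemma; it simply records that ``the above properties can be established by standard arguments, see \cite[Sect.\ 3.2]{bib:CZ}'' and adds only the remark that $T_t\colon L^p\to L^\infty$ need not be bounded for $t<t_{\rm b}$ since (A2) is assumed only at a single time $t_{\rm b}$. Your plan is exactly the content that citation delegates: Khasminskii $\Rightarrow$ $L^\infty$-boundedness, duality/Riesz--Thorin $\Rightarrow$ $L^p$-boundedness, Markov decomposition with H\"older against $p(t_{\rm b},\cdot)\in L^1\cap L^\infty$ $\Rightarrow$ the $L^p\to L^\infty$ smoothing for $t\ge t_{\rm b}$ (resp.\ $t\ge 2t_{\rm b}$), a Dunford--Pettis factorisation for the kernel, strong Feller plus a small-time Khasminskii perturbation for continuity, and positivity of the free kernel for positivity improving. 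So the approach agrees with what the paper intends, and it is essentially correct.

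One point you state without justification and that is \emph{not} contained in the paper's own remark after \eqref{eq:nuinf}: from \eqref{eq:nuinf} the paper only deduces that the one-dimensional distributions are absolutely continuous, not that $p(t,\cdot)>0$ everywhere. To get item (4) along your lines you need strict positivity of $p(t,\cdot)$ for $t\ge 2t_{\rm b}$, and absolute continuity alone does not give this. It does hold here, but requires a short separate argument: splitting $X$ into its truncated small-jump part (with density $p^0_t$) and the compound Poisson part with jump intensity $\nu_1=\nu\mathbf{1}_{B(0,1)^c}$, the two-jump term in the Poisson expansion gives $p(t,x)\ge e^{-|\nu_1|t}\frac{t^2}{2}\,(p^0_t*\nu_1*\nu_1)(x)$, and since $\nu>0$ on $\R^d\setminus\{0\}$ one checks that $\nu_1*\nu_1>0$ on all of $\R^d$, whence the convolution with the probability density $p^0_t$ is strictly positive everywhere. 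With that supplement your outline is complete and matches the standard route the paper cites. Also, your $L^1\to L^2$ step for $T_{t_{\rm b}}$ is cleanest via self-adjointness and duality with the already-established $L^2\to L^\infty$ bound, rather than a direct H\"older against $p(t_{\rm b},\cdot)\in L^2$, since the latter runs into the non-integrability of $|f|^2$ for $f\in L^1$.
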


\noindent
The above properties can be established by standard arguments, see \cite[Sect. 3.2]{bib:CZ}. Note that we do not
assume that $p(t,x)$ is bounded for all $t>0$, and thus in general the operators $T_t: L^p(\R^d) \to L^{\infty}(\R^d)$
need not be bounded for $t<t_{\rm b}$.

Related to the Feynman-Kac semigroup, we also define the potential operator by
\begin{align*}
G^V f(x) = \int_0^\infty T_t f(x) dt = \ex^x \left[\int_0^\infty e^{-\int_0^t V(X_s)ds} f(X_t) dt \right],
\end{align*}
for non-negative or bounded Borel functions $f$ on $\R^d$. Recall that $\tau_D$ denotes the first exit time of
the process from domain $D$. Whenever $D \subset \R^d$ is an open set and $f$ is a non-negative or bounded Borel
function on $\R^d$, it follows by the strong Markov property of the process that for every $x \in D$
\begin{equation}
\label{eq:pot1}
\begin{split}
G^V f(x)
& =
\ex^x\left[\int_0^{\tau_D} e^{-\int_0^t V(X_s)ds} \, f(X_t) dt \right] + \ex^x\left[\tau_D < \infty;
e^{-\int_0^{\tau_D} V(X_s)ds} \, G^V f(X_{\tau_D})\right].
\end{split}
\end{equation}
For background on potential theory we refer to \cite{bib:CZ, bib:BlG, bib:Ber, bib:BB1, bib:CS, bib:BBKRSV, bib:BKK}.

\bigskip
\section{Estimates of harmonic functions}
\label{sec:harm}

\subsection{Lower bound for functions harmonic at infinity}

Using \eqref{eq:pot1} it will be seen that eigenfunctions for a given process and a given potential are comparable to specific harmonic functions; this will be explored to a large extent by deriving and using the representation (\ref{eq:eig1}) below. In this section first we develop some technical tools concerning harmonic functions.

Let $\eta  >0$. Recall that a non-negative Borel function $f$ on $\R^d$ is called $(X,\eta)$-harmonic in an open set
$D \subset \R^d$ if
\begin{align}
\label{def:harm}
f(x) & = \ex^x\left[\tau_U < \infty; e^{-\eta \tau_U} f(X_{\tau_U})\right], \quad x \in U,
\end{align}
for every open set $U$ with its closure $\overline{U}$ contained in $D$, and it is called regular $(X,\eta)$-harmonic
in $D$ if \eqref{def:harm} holds for $U=D$ (where $\tau_U$ is the first exit time from $U$). By the
strong Markov property every regular $(X,\eta)$-harmonic function in $D$ is $(X,\eta)$-harmonic in $D$. Below we mainly
consider the case when \eqref{def:harm} holds with $D = \overline B(0,r)^c$ for some $r   >0$. We refer to this property
as \emph{harmonicity at infinity}.

We also recall that when $D \subset \R^d$ is a bounded open domain, the following formula due to Ikeda and
Watanabe holds \cite[Th. 1]{bib:IW}: for every $\eta>0$ and every bounded or non-negative Borel function $f$ on
$\R^d$ such that $\dist(\supp f, D) >0$, we have
\begin{align}
\label{eq:IWF}
\ex^x\left[e^{-\eta \tau_D} f(X_{\tau_D})\right] = \int_D \int_0^{\infty} e^{-\eta t} p_D(t,x,y) dt \int_{D^c} f(z)
\nu(z-y) dzdy, \quad x \in D.
\end{align}

The next theorem is our first result in this section. It states that non-negative functions that are
$(X,\eta)$-harmonic at infinity are bounded from below by $\nu$, no matter how small $\eta$ is. On the other
hand, it says that when condition (A1.2), or at least the jump-paring property (A1.3), fails to hold, then
such functions will not be dominated by $\nu$ at infinity even for large $\eta$. This means that it is
reasonable to ask which properties of jump-paring L\'evy processes will guarantee that the functions
$(X,\eta)$-harmonic at infinity are comparable to $\nu$ at least far away from the origin.

\begin{theorem}
\label{prop:lowerboundharm}
Let $\eta > 0$ and $\pro X$ be a symmetric L\'evy process with L\'evy-Khinchin exponent satisfying
\eqref{eq:Lchexp}-\eqref{eq:nuinf}. Let $r>0$ and $f$ be a non-negative $(X,\eta)$-harmonic function in
$\overline B(0,r)^c$ such that $\int_{B(0,r)} f(z) dz > 0$. Then we have the following:
\begin{itemize}
\item[(1)]
If (A1.1)-(A1.2) hold, then
$$
f(x) \geq
\left( \frac{1-e^{-\eta}}{C_5^2 C_6^{\left\lceil r\right\rceil+1} \eta} \, \pr^0(\tau_{B(0,1)} > 1)
\int_{B(0,r)} f(z)dz \right) \, \nu(x), \quad |x| > r + 1.
$$
\item[(2)]
Let (A1.1) hold and suppose $\inf_{|y| \leq (r \vee 2)+1} f(y) > 0$. Consider the following disjoint cases:
\begin{itemize}
\item[(i)]
(A1.2) holds and (A1.3) does not hold.
\item[(ii)]
(A1.2) does not hold (and hence (A1.3) fails to hold).
\end{itemize}
Then in either of cases (i) and (ii) we have $\limsup_{|x| \to \infty} \frac{f(x)}{\nu(x)} = \infty$.
\end{itemize}
\end{theorem}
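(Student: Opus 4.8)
The plan is to exploit the harmonicity identity \eqref{def:harm} directly, with $U = B(0,r)^c \setminus \overline{B}$ for a suitable large ball, or rather to use the Ikeda–Watanabe formula to extract the single-jump contribution. Let me set up the core estimate for part (1). Fix $x$ with $|x| > r+1$ and apply \eqref{def:harm} with $U = B(0,1)$ translated to be centered at $x$, i.e. $U = B(x,1)$; since $\overline{U} \subset \overline B(0,r)^c$, $f$ is $(X,\eta)$-harmonic there. By the Ikeda–Watanabe formula \eqref{eq:IWF} applied with $D = B(x,1)$ and $f$ restricted to $B(0,r)$ (noting $\dist(\supp(f\1_{B(0,r)}), B(x,1)) > 0$ when $|x| > r+1$), we get
\[
f(x) \geq \ex^x\left[e^{-\eta \tau_{B(x,1)}} f(X_{\tau_{B(x,1)}})\1_{B(0,r)}(X_{\tau_{B(x,1)}})\right]
= \int_{B(x,1)} \int_0^\infty e^{-\eta t} p_{B(x,1)}(t,x,w)\, dt \int_{B(0,r)} f(z)\,\nu(z-w)\,dz\,dw.
\]
For $w \in B(x,1)$ and $z \in B(0,r)$ we have $|z-w| \leq |z| + |w-x| + |x| \leq r + 1 + |x|$, actually more carefully $|z - w| \le |z-x| + |x-w| \le |x| + r + 1$ (using $|z| \le r$ shifts this; one should write $|z-w| \le |x-w| + |x| + |z| \le 1 + |x| + r \le |x| + \lceil r\rceil + 1$ roughly), so by monotonicity (A1.1) and the doubling property (A1.2) iterated $\lceil r \rceil + 1$ times, $\nu(z-w) \geq C_5^{-2} g(|z-w|) \geq C_5^{-2} C_6^{-(\lceil r\rceil+1)} g(|x|) \geq C_5^{-2}C_6^{-(\lceil r\rceil+1)} C_5^{-1}\nu(x)$ — the constants being tracked to match the statement. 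This pulls $\nu(x)$ out of the integral. It remains to bound $\int_{B(x,1)}\int_0^\infty e^{-\eta t} p_{B(x,1)}(t,x,w)\,dt\,dw = \ex^x[\int_0^{\tau_{B(x,1)}} e^{-\eta t}\,dt] = \eta^{-1}(1 - \ex^x[e^{-\eta \tau_{B(x,1)}}])$ from below. Since $\ex^x[e^{-\eta \tau_{B(x,1)}}] \leq e^{-\eta} + \pr^x(\tau_{B(x,1)} \le 1) \le e^{-\eta} + 1 - \pr^x(\tau_{B(x,1)} > 1) = e^{-\eta} + 1 - \pr^0(\tau_{B(0,1)} > 1)$ by translation invariance, we obtain $\eta^{-1}(1 - \ex^x[e^{-\eta\tau_{B(x,1)}}]) \geq \eta^{-1}(\pr^0(\tau_{B(0,1)} > 1) - e^{-\eta} \cdot 0)$ — I need to be slightly more careful: $1 - e^{-\eta} - (1 - \pr^0(\tau_{B(0,1)}>1)) = \pr^0(\tau_{B(0,1)}>1) - e^{-\eta}$, which is not obviously positive, so instead one estimates $\ex^x[e^{-\eta\tau_{B(x,1)}}] = \ex^x[e^{-\eta\tau};\tau \le 1] + \ex^x[e^{-\eta\tau};\tau>1] \le \pr^0(\tau_{B(0,1)}\le 1) + e^{-\eta}\pr^0(\tau_{B(0,1)}>1)$, giving $1 - \ex^x[e^{-\eta\tau_{B(x,1)}}] \ge (1-e^{-\eta})\pr^0(\tau_{B(0,1)}>1)$. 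Combining everything yields the claimed lower bound.

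For part (2), the strategy is a \emph{lower bound via a two-jump scenario} that the process cannot avoid being larger than $\nu$ when the convolution/doubling conditions fail. In case (i): if (A1.3) fails, then $\sup_{|x|\ge 1}\frac{1}{g(|x|)}\int_{|x-y|>1,|y|>1} g(|x-y|)g(|y|)\,dy = \infty$, equivalently $\limsup_{|x|\to\infty} \frac{1}{\nu(x)}\int_{|x-y|>1,|y|>1}\nu(x-y)\nu(y)\,dy = \infty$ (up to the $C_5$ factors). The plan is to bound $f(x)$ below by the probability of making \emph{two} large jumps: apply harmonicity with $U = B(x,1)$ and keep not the landing in $B(0,r)$ but the landing at an intermediate point $y$ with $|y| > (r\vee 2)+1$, then use harmonicity again at $B(y,1)$ to reach $B(0,r)$. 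More precisely, iterate the Ikeda–Watanabe representation: $f(x) \ge c\int_{\{|y|>(r\vee 2)+1\}} \nu(y-x)\, \big(\inf_{|w-y|<1} f(w)\big)\,dy$ after one step, and then a second application of part-(1)-type reasoning (using $\inf_{|w|\le (r\vee2)+1} f(w) > 0$) gives $\inf_{|w-y|<1} f(w) \ge c'\nu(y)$ for $|y|$ large. This produces $f(x) \ge c''\int_{|y|>(r\vee2)+1} \nu(y-x)\nu(y)\,dy$. Dividing by $\nu(x)$ and taking $\limsup_{|x|\to\infty}$, the right side is $\infty$ by the failure of (A1.3), restricting the $y$-integral to the region $|x-y|>1$ which is automatic for large $|x|$. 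Case (ii) is similar but simpler: if (A1.2) fails, then $\limsup_{|x|\to\infty} g(|x|)/g(|x|+1) = \infty$, and one compares $f(x)$ with $f$ at a point roughly one unit closer to the origin — a single-jump estimate $f(x) \ge c\,\nu(e_1)\inf_{|w - (x - x/|x|)|<1/2} f(w)$ combined with $\inf_{|w|\le(r\vee2)+1}f(w)>0$ iterated; alternatively one observes directly that (A1.2) failing forces the one-jump lower bound $f(x)\gtrsim \nu(x')$ for $x'$ a bounded distance closer, and then $\nu(x')/\nu(x)$ is unbounded. A cleaner route: note $f$ being $(X,\eta)$-harmonic at infinity and positive near the origin gives, via exactly the part (1) argument applied "in reverse" at scale $1$, the chain inequality $f(x) \ge c_\eta\, g(|x - z|)\, f(z)$ roughly for any reference $z$, and letting $z$ range so that $|x-z| \asymp |x| + O(1)$ while the target region stays fixed, unboundedness of $g(|x|+k)/g(|x|)$-type ratios under failure of (A1.2) does the job; but the two-jump argument above is the robust one and handles (i) and (ii) uniformly since (A1.2) failing implies (A1.3) failing.

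The main obstacle I anticipate is part (2): one must be careful that the intermediate-landing argument genuinely picks up the divergent convolution integral rather than a truncated or shifted version — specifically, the constraint $|y| > (r\vee2)+1$ in the $y$-integral must be compatible with the unboundedness of $\frac{1}{\nu(x)}\int_{|x-y|>1,|y|>1}\nu(x-y)\nu(y)\,dy$, which requires noting that the contribution from $1 < |y| \le (r\vee2)+1$ is $O(\nu(x))$ by (A1.1)–(A1.2) (or directly by boundedness of that shell's integral against $\nu(x-y) \lesssim \nu(x)$), so it cannot be responsible for the $\limsup = \infty$; hence the divergence survives the restriction. A second delicate point is justifying the second jump's lower bound $\inf_{|w-y|<1} f(w) \gtrsim \nu(y)$ uniformly for all large $|y|$: this is precisely part (1) but one must check the constant there does not degrade as $|y|\to\infty$ — it does not, because in part (1) the prefactor $C_6^{\lceil r\rceil+1}$ depends only on the fixed reference ball radius (here effectively $r\vee 2$), not on $|x|$. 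Finally, one should handle case (ii) by simply invoking that (A1.2) $\Rightarrow$ (A1.3) fails (stated in the excerpt, via \cite{bib:KS14}), so case (ii) reduces to the same two-jump computation as case (i), with the extra observation that without (A1.2) the shell-contribution bookkeeping above is done using only (A1.1).
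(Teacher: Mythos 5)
Your proof of part (1) is essentially the paper's argument: apply the Ikeda--Watanabe formula at $D=B(x,1)$, restrict the landing set to $B(0,r)$, use (A1.1)--(A1.2) to pull $\nu(x)$ out of the inner integral, and bound the time factor by $\eta^{-1}(1-e^{-\eta})\pr^0(\tau_{B(0,1)}>1)$. (Your constant chain has a small slip --- the first inequality should read $\nu(z-w)\ge C_5^{-1}g(|z-w|)$, not $C_5^{-2}$, so you end up with $C_5^{-3}$ rather than the correct $C_5^{-2}$.) Your treatment of part (2)(i) is also sound and matches the paper's logic: use part (1) plus $\inf_{|y|\le(r\vee 2)+1}f(y)>0$ to get $f(y)\gtrsim g(|y|)$ for $|y|\ge 1$, apply Ikeda--Watanabe once, and note that the restricted region still carries the divergent convolution (your observation that the shell $1<|y|\le R_0$ contributes only $O(\nu(x))$ is exactly the point the paper finesses by directly proving $f\gtrsim g$ on all of $\{|y|\ge 1\}$). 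These steps are correct.

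There is, however, a genuine gap in your plan for case (2)(ii). Your declared preference is the ``robust two-jump argument,'' arguing that since (A1.2) failing implies (A1.3) failing, case (ii) reduces to the same computation as case (i). This does not work. The two-jump argument needs the intermediate bound $\inf_{|w-y|<1}f(w)\gtrsim \nu(y)$, which you derive from part (1); but part (1) uses (A1.2) in an essential way (to convert $g(|x|+1+r)$ back into $\gtrsim g(|x|)$). In case (ii) you only have (A1.1), so a single Ikeda--Watanabe step gives you $f(w)\gtrsim g(|w|+r+1)$ and \emph{not} $f(w)\gtrsim g(|w|)$; these are not comparable when (A1.2) fails, and the resulting integral no longer controls the divergent convolution. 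Moreover the ``shell bookkeeping'' you invoke also relies on (A1.2) to bound $\nu(x-y)\lesssim\nu(x)$ for $y$ in a fixed bounded shell and $|x|$ large, so it too is unavailable. The paper's actual argument for (ii) is a \emph{one-jump} estimate tailored to the failing sequence $r_n$: set $x_n=(r_n,0,\dots,0)$, land the jump from $B(x_n,1)$ into the fixed ball $B\bigl((5/2,0,\dots,0),1/2\bigr)$ (so that $|z-y|\le r_n-1$), and use $g(r_n-1)\ge n\,g(r_n)$ together with $\inf_{|y|\le(r\vee 2)+1}f(y)>0$. Your ``cleaner route'' (the inequality $f(x)\ge c_\eta\,g(|x-z|)f(z)$ with $z$ near the origin, chosen on the same side as $x$ so that $|x-z|=|x|-O(1)$) is essentially this idea and would work if made precise, but you dismiss it in favor of the two-jump argument, which is the one that fails.
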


\begin{proof}
First note that by $(X,\eta)$-harmonicity of $f$ in $\overline B(0,r)^c$ we have $f(x) = \ex^x[e^{-\eta \tau_{B(x,1)}}
f(X_{\tau_{B(x,1)}})]$, for every $|x| > r+1$. Thus by the Ikeda-Watanabe formula \eqref{eq:IWF}, for every
$|x| > r+1$
\begin{align*}
f(x) & \geq \int_0^{\infty} e^{-\eta t} \pr^x(\tau_{B(x,1)} > t) dt \,  \inf_{y \in B(x,1)} \int_{B(0,r)} f(z) \nu(z-y) dz \\
& \geq
\frac{1}{C_5} \left(\int_0^1 e^{-\eta t} dt \, \pr^0(\tau_{B(0,1)} > 1) \int_{B(0,r)} f(z) dz\right) g(|x|+1+r) \\
& \geq
\left(\frac{1-e^{-\eta}}{C_5^2 C_6^{\left\lceil r\right\rceil+1} \eta} \, \pr^0(\tau_{B(0,1)}   > 1) \int_{B(0,r)} f(z)dz \right)\,
\nu(x),
\end{align*}
which proves (1). We now show (2). When (A1.2) fails to hold (case (ii)), there exists a sequence $\seq r$ such that
$g(r_n-1) \geq n g(r_n)$. Notice that necessarily $r_n \to \infty$ as $n \to \infty$. Let $x_n=(r_n,0,...,0)$. With this,
by following the argument as in (1) above, we get
\begin{align*}
f(x_n) & \geq \int_0^{\infty} e^{-\eta t}  \int_{B(x_n,1)} e^{-\eta t} p_{B(x_n,1)}(t, x_n, z) dt \,  \int_{B(0,(r \vee 2)+1)}
f(y) \nu(z-y) dydz \\
& \geq
\frac{1}{C_5} \left(\int_0^1 e^{-\eta t} dt \, \pr^0(\tau_{B(0,1)} > 1) \int_{B((5x_n/2r_n) ,1/2)} f(y) dy\right) g(r_n-1) \\
& \geq
n \, \left(\frac{1-e^{-\eta}}{C_5  \eta} \, \pr^0(\tau_{B(0,1)}   > 1) \, \inf_{|y| \leq (r \vee 2)+1} f(y) |B(0,1/2)| \right)\, g(r_n),
\end{align*}
for sufficiently large $n$. From this we easily see that $\lim_{n \to \infty}f(x_n)/\nu(x_n) = \infty$.

Consider now the case (i) and suppose that (A1.2) holds while (A1.3) does not. If (A1.3) breaks down, then there exists a
divergent sequence $\seq s$ such that for sufficiently large $n$
\begin{align}\label{eq:aux_seq}
\int_{|y-x_n| > 1 \atop |y|>1} g(|x_n-y|) g(|y|) dy \geq n \, g(|x_n|), \quad n \in \N, \quad \text{where} \quad x_n=(s_n,0,...,0).
\end{align}
By the facts $g(|y|) \leq g(1) < \infty$, $|y| \geq 1$, and $\inf_{|y| \leq r+1} f(y)    > 0$, there exists $c=c(r)$ such that $g(|y|)
\leq c f(y)$, for $1 \leq |y| \leq r+1$. This implies jointly with (1) that
$$
f(y) \geq c_1 g(|y|) , \quad |y| \geq 1, \quad \text{for some} \; \; c_1 > 0.
$$
Proceeding now in the same way as in (1), by making use of \eqref{eq:aux_seq} and (A1.2) we get
\begin{align*}
f(x_n) & \geq \frac{c_1}{C_5} \int_0^{\infty} \int_{B(x_n,1/2)} e^{-\eta t} p_{B(x_n,1/2)}(t, x_n, z) dt \,  \int_{|y-x_n| > 1/2
\atop |y|>1} g(|z-y|) g(|y|) dy dz \\
& \geq
 \frac{c_2}{C_5} \int_0^1 e^{-\eta t} dt \, \pr^0(\tau_{B(0,1/2)} > 1) \, \int_{|y-x_n|   > 1 \atop |y|>1} g(|x_n-y|) g(|y|) dy dz \\
& \geq n \left( \frac{c_2}{C_5} \int_0^1 e^{-\eta t} dt \, \pr^0(\tau_{B(0,1/2)} > 1) \right) g(|x_n|),
\end{align*}
for sufficiently large $n$. Similarly as above, this completes the proof in the case (i).
\end{proof}

\subsection{Uniform estimate for suprema of functions harmonic in balls}
\label{subsec:harm}
In this subsection we derive a uniform upper bound for functions that are regular $(X,\eta)$-harmonic in large balls,
which will be a basic technical tool in what follows. It can be obtained as an adaptation of the strong estimates of \cite{bib:BKK} to our framework.

For $s_1\geq 1$ and $s_2 \geq 2s_1$ define
\begin{align*}
h_1(X,s_1,s_2) = K^X_2(s_1, s_2,\infty) \left[C_3\left(X,\frac{s_1}{16}\right) \Big(C_{13}(X,s_1)
\, |B(0,s_1)|  + \ex^0 [\tau_{B(0,2s_1)}]\Big) + 1 \right]
\end{align*}
and
\begin{align*}
h_2(X, s_1) = C_3\left(X,\frac{s_1}{16}\right) \Bigg[C_3\left(X,s_1\right) C_{13}(X,s_1) +
\ex^0 [\tau_{B(0,2s_1)}] \sup_{|y| \geq \frac{s_1}{4}} \nu(y)\Bigg] + \sup_{|y| \geq \frac{s_1}{16}} \nu(y),
\end{align*}
where
$$
C_{13}(X,s_1) := K^X_3(s_1) + \frac{\ex^0 [\tau_{B(0,2s_1)}]}{\left|B(0,\frac{s_1}{4})\right|}
\left(K^X_2\left(\frac{s_1}{4}, \frac{s_1}{2}, s_1\right)\right)^2.
$$
It is of key importance that neither $h_1$ nor $h_2$ depends on $x$, $\eta$ and the function $f$. With the above
notation we have the following.

\begin{lemma}
\label{lm:bhi}
Let $\pro X$ be a L\'evy process with L\'evy-Khinchin exponent $\psi$ given by \eqref{eq:Lchexp}-\eqref{eq:nuinf} such
that Assumptions (A1.1)-(A1.2) and (A3) hold, and let $s_1 \geq 1$ and $s_2 \geq 2s_1$. Then for every $\eta  >0$ and
every non-negative function $f$ on $\R^d$ which is regular $(X,\eta)$-harmonic in a ball $B(x,s_1)$, $x \in \R^d$, we have
\begin{align}
\label{eq:harmest}
f(y) \leq \frac{1}{\eta} \, \left(h_1(X,s_1, s_2) \int_{|z-x| > s_2}f(z) \nu(z-x)dz
+ h_2(X, s_1) \int_{\frac{s_1}{8} < |x-z| \leq s_2} f(z)dz \right) , \quad |y-x|< \frac{s_1}{32}.
\end{align}
\end{lemma}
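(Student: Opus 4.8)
The plan is to follow the strategy of the strong harmonic estimates of \cite{bib:BKK}, but to make all intermediate bounds uniform in $x$, $\eta$, and $f$ by absorbing the process-dependent quantities into the explicit constants $h_1$, $h_2$ and $C_{13}$. Fix $x$ and write $B = B(x,s_1)$. By regular $(X,\eta)$-harmonicity of $f$ in $B$ and the Ikeda--Watanabe formula \eqref{eq:IWF}, for $|y-x|<s_1/32$ we have
\begin{align*}
f(y) = \ex^y\big[e^{-\eta\tau_B}f(X_{\tau_B})\big]
= \int_B \int_0^\infty e^{-\eta t}p_B(t,y,w)\,dt \int_{B^c} f(z)\,\nu(z-w)\,dz\,dw.
\end{align*}
Since $\int_0^\infty e^{-\eta t}p_B(t,y,w)\,dt \leq \frac{1}{\eta}\,\big(\eta\int_0^\infty e^{-\eta t}p_B(t,y,w)\,dt\big)$ and the inner bracket is a subprobability density in $w$ that is in turn bounded by $\eta$ times the Green function $G_B(y,w)$, it suffices to estimate
$
\int_B G_B(y,w)\int_{B^c}f(z)\nu(z-w)\,dz\,dw
$
and then multiply by $\tfrac1\eta$; the factor $e^{-\eta\tau_B}\le 1$ is used only to drop the killing once the Green-function bound is in place. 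I split the outer integral over $z\in B^c$ into the far part $\{|z-x|>s_2\}$ and the near annulus $\{s_1\le|z-x|\le s_2\}$ (the region $s_1/8<|z-x|\le s_1$ will be recovered along the way), and the inner integral over $w\in B$ into $w$ near $y$ (say $|w-y|<s_1/8$, where $G_B(y,w)$ is large but the jump kernel is controlled) and $w$ away from $y$ (where $\nu(z-w)$ and $G_B$ must be handled more crudely but the mean exit time $\ex^0[\tau_{B(0,2s_1)}]$ and the suprema $\sup_{|u|\ge s_1/16}\nu(u)$, $\sup_{|u|\ge s_1/4}\nu(u)$ take over).

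The two key mechanisms to carry out are: (a) passing from $\nu(z-w)$ with $w$ close to $y$ to $\nu(z-x)$ at the cost of the factor $K^X_2(s_1,s_2,\infty)$ in the far regime (using the definition \eqref{def:K2}, since $|w-x|<s_1\le s_1$ and $|z-x|\ge s_2$), and the factor $K^X_2(s_1/4,s_1/2,s_1)$ squared in the intermediate regime, exactly as packaged into $C_{13}(X,s_1)$ and $h_1$; and (b) controlling $\int_B G_B(y,w)\,dw$ and localized versions of it. For the latter I use $\int_B G_B(y,w)\,dw=\ex^y[\tau_B]\le \ex^0[\tau_{B(0,2s_1)}]$ (by translation and monotonicity of exit times in the domain, since $B(y,s_1-s_1/32)\subset B\subset B(x,s_1)\subset B(0,2s_1)+x$ up to translation), and for the part with $w$ in a small ball around $y$ I use the pointwise Green-function bound $G_B(y,w)\le K^X_3(s_1)$ from \eqref{def:K_3} when $|y-w|\ge s_1/8$, together with $|B(0,s_1/4)|$-type volume factors; this is precisely how $K^X_3$ and the ratio $\ex^0[\tau_{B(0,2s_1)}]/|B(0,s_1/4)|$ enter $C_{13}$. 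The terms $C_3(X,\cdot)$ arise when, in the standard Dynkin-type argument, one replaces an indicator-type cutoff of a ball by a smooth function $f_s\in\cB$ and estimates $\|Lf_s\|_\infty$ via \eqref{def:C9a}; this produces the outer factors $C_3(X,s_1/16)$ and $C_3(X,s_1)$ multiplying the Green/exit-time data, and an additive $\sup_{|u|\ge s_1/16}\nu(u)$ and $\sup_{|u|\ge s_1/4}\nu(u)$ from the jump part of $L$ acting on that cutoff. Collecting the far-regime contributions gives the coefficient $h_1(X,s_1,s_2)$ in front of $\int_{|z-x|>s_2}f(z)\nu(z-x)\,dz$, and collecting the near-annulus contributions gives $h_2(X,s_1)$ in front of $\int_{s_1/8<|x-z|\le s_2}f(z)\,dz$, with the overall $\tfrac1\eta$ as above.

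The main obstacle I anticipate is the bookkeeping in the intermediate region $s_1/8<|z-x|\le s_2$ together with $w\in B$ not close to $y$: here neither the clean far-field estimate using $K^X_2(s_1,s_2,\infty)$ nor a trivial bound on $\nu(z-w)$ is available, and one must iterate a Green-function/mean-exit-time estimate across the two scales $s_1/4$ and $s_1/2$, which is exactly where the square of $K^X_2(s_1/4,s_1/2,s_1)$ and the volume ratio in $C_{13}$ come from. Making sure that every constant that appears is genuinely independent of $x$ (by translation invariance), of $\eta$ (by pulling out a single $1/\eta$ at the start and using $e^{-\eta\tau_B}\le1$ thereafter), and of $f$ (by working only with $\int f\,\nu$ and $\int f$ against nonnegative kernels) is the delicate part; the rest is a careful but routine adaptation of \cite[proof of the main harmonic estimate]{bib:BKK}, using \eqref{eq:gen_est_tau} to bound $\ex^0[\tau_{B(0,2s_1)}]$ when needed and \eqref{def:C9} to bound the $C_3$ factors by $\Psi(1/s_1)$-type quantities if an explicit form is wanted.
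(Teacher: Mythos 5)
Your high-level plan — replay the \cite{bib:BKK} machinery while keeping all constants explicit, translation invariant and process-dependent — is the right idea and matches what the paper does, but the concrete decomposition you propose does not work, and it misses the two-scale structure that is the crux of the argument.

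First, you apply the Ikeda--Watanabe formula to the full harmonicity ball $B=B(x,s_1)$ and write $f(y)=\int_B\int_0^\infty e^{-\eta t}p_B(t,y,w)\,dt\int_{B^c}f(z)\nu(z-w)\,dz\,dw$. This equality is not available here: \eqref{eq:IWF} requires $\dist(\supp f, D)>0$, whereas $f$ is defined on all of $\R^d$; moreover, when $A\neq 0$ the process can also exit $B$ continuously through $\partial B$, a contribution that \eqref{eq:IWF} does not capture. Even setting that aside, with $w$ ranging over all of $B$ and $z\in B^c$, the quantity $\nu(z-w)$ is uncontrolled for $z$ just outside $\partial B$ (the distance $|z-w|$ can be arbitrarily small), so you cannot reach the clean kernel $\nu(z-x)$ with a bounded constant on the region near $\partial B$. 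Second, your handling of the factor $1/\eta$ is circular: you bound the $\eta$-killed Green kernel by $G_B(y,w)$ by using $e^{-\eta t}\le 1$ and then ``multiply by $1/\eta$ at the end,'' which has no justification — $\int_B G_B(y,w)\,dw=\ex^y[\tau_B]$ carries no $\eta$ at all. The $1/\eta$ must come from the dynamics.

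The paper's actual proof separates two things that you conflate. It first proves an $\eta$-free multi-scale estimate of the form $\sup_{|y-x|<s_1/8}f(y)\le\int_{|z-x|\ge s_1/2}f(z)\pi_{s_1,s_2}(z-x)\,dz$ (a localized version of \cite[(3.7)]{bib:BKK}), which is built up across the radii $s_1/8,\,s_1/4,\,s_1/2,\,s_1$ and is precisely where $C_{13}(X,s_1)$, the square of $K^X_2(s_1/4,s_1/2,s_1)$, $K^X_3(s_1)$, and $\ex^0[\tau_{B(0,2s_1)}]$ appear. Then, separately, for $|y-x|<s_1/32$ it decomposes $f(y)=\rm I+\rm II$ according to whether $X_{\tau_{B(x,s_1/16)}}$ lands in $B(x,s_1/8)$ or not. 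Term $\rm II$ is estimated by \eqref{eq:IWF} on the small ball $B(x,s_1/16)$ (for which the support condition is satisfied), giving the factor $\ex^y[\int_0^{\tau_{B(x,s_1/16)}}e^{-\eta t}\,dt]\le 1/\eta$. Term $\rm I$ is bounded by $\ex^y[e^{-\eta\tau_{B(x,s_1/16)}};X_{\tau}\in B(x,s_1/8)]\cdot\sup_{|y-x|<s_1/8}f$, where the expectation is $\le C_3(X,s_1/16)/\eta$ by the Dynkin/cutoff estimate (\cite[Lem. 3.1]{bib:BKK}) and the sup is handled by the $\eta$-free estimate above. Both produce $1/\eta$ honestly, and combining them yields exactly $h_1,h_2$. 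Without the small ball $B(x,s_1/16)$ decomposition and the prior $\eta$-free estimate, you can get neither the $1/\eta$ factor nor control of the near-boundary jumps, so the proposal as written has a genuine gap.
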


\begin{proof}
The claimed bound is a version of the upper estimate in (3.3) of \cite[Lem. 3.2]{bib:BKK}, but the constant in that bound
is not suitable for our purposes here and \eqref{eq:harmest} does not follow from the statement in the cited paper. However, the required form of constants can be obtained from the original statement with some extra work, and in this proof we keep to the notation of \cite{bib:BKK} for the reader's convenience.

Note that by conditions \eqref{eq:nuinf}, (A1.1)-(A1.2) and (A3) above all of the Assumptions A, B, C, D in \cite{bib:BKK}
hold. Let $s_1 \geq 1$ and $s_2 \geq 2s_1$. First we show that a version of the estimate \cite[(3.7)]{bib:BKK} holds with
$R=s_1$, $q=s_1/2$, $p=s_1/4$ and $r=s_1/8$, i.e.,
\begin{align} \label{eq:harm_est}
f(y) \leq \int_{|z-x| \geq \frac{s_1}{2}} f(z) \pi_{s_1,s_2}(z-x) dz, \quad |y-x| < \frac{s_1}{8},
\end{align}
where
$$
\pi_{s_1,s_2}(z-x) = \left\{
\begin{array}{lll}
C_3(X,s_1) C_{13}(X,s_1) + \ex^0 [\tau_{B(0,2s_1)}] \, \sup_{|y| \geq \frac{s_1}{4}} \nu(y)
\ \ \ \ \ \ \ \ \ \ \ \ \ \mbox{for} & \frac{s_1}{2} \leq |z-x| \leq s_2, &  \vspace{0.2cm} \\
K^X_2(s_1,s_2,\infty) \Big( C_{13}(X,s_1) |B(0,s_1)| + \ex^0 [\tau_{B(0,2s_1)}] \Big ) \, \nu(z-x)
& \ \mbox{for} \ \ |z-x|  > s_2.&
\end{array}\right.
$$
This can be seen by following through the argument in \cite[Th. 3.4]{bib:BKK}. First observe that the constant $c_{(2.7)}(x,p,q)$ can
be ``localized'' in space, i.e., its value actually depends on the position $z$ appearing in $\nu(z-x)$ and $\nu(z-y)$ when $|x-y| < p = s_1/4$. Indeed, we have
\begin{align} \label{eq:c27}
c_{(2.7)}(x,p,q) = \left\{
\begin{array}{lrl}
K^X_2(s_1/4, s_1/2, s_1)  & \mbox{  for } & \frac{s_1}{2} \leq |z-x| \leq s_1, \vspace{0.2cm} \\
K^X_2(s_1/4,s_1, s_2)   & \mbox{  for } & s_1 < |z-x| \leq s_2, \vspace{0.2cm} \\
K^X_2(s_1/4, s_2, \infty)   & \mbox{  for  } & |z-x|    > s_2.
\end{array}\right.
\end{align}
Moreover, recall that $K^X_2(v, s_2, \infty)$ is non-decreasing in $v \in (0, s_2)$ (we emphasize that in
our space-homogeneous case none of the constants depends on $x$). Also, we see that in our setting
$c_{(2.9)}(x,R) \leq \ex^0[\tau_{B(0,2s_1)}]$ and $c_{(2.10)}(x,r,p,R) \leq K_3^X(s_1)$.
By this and \eqref{eq:c27} we can directly check that in \cite[Lem. 4.5]{bib:BKK} we have
$c_{(4.17)}(x,r,p,q,R) \leq C_{13}(X,s_1)$, i.e., the constant $c_{(2.7)}(x,p,q)$ appearing in the estimates
is equal to $K^X_2(s_1/4, s_1/2, s_1)$. With this, we can now verify that the statement of \cite[Lem. 4.9]{bib:BKK}
stays valid with the kernel $\tilde{\pi}_{\psi}(z)$ replaced by
$$
\tilde{\pi}_{s_1,s_2}(z-x) = \left\{
\begin{array}{lll}
C_{13}(X,s_1) \delta + \ex^0 [\tau_{B(0,2s_1)}] \, \sup_{|w| \geq \frac{s_1}{4}} \nu(w)
\ \ \ \ \ \ \ \ \ \ \ \ \ \mbox{for} & \frac{s_1}{2} \leq |z-x| \leq s_2, &  \vspace{0.2cm} \\
K^X_2(s_1,s_2,\infty) \Big( C_{13}(X,s_1) |B(0,s_1)| + \ex^0 [\tau_{B(0,2s_1)}] \Big ) \, \nu(z-x)
& \ \mbox{for} \ \ |z-x| > s_2,&
\end{array}\right.
$$
in \cite[(4.21)]{bib:BKK}.
To do that, recall the specific choices $R=s_1$, $q=s_1/2$, $p=s_1/4$, $r=s_1/8$, and observe that \cite[(4.22)]{bib:BKK} holds with the constant $C_{13}(X,s_1) \, \delta$. We can now continue similarly
as in the second part of the proof of \cite[Lem.4.9]{bib:BKK}.
For $|w-x| \leq s_1/8$ and $z  \in V^c$ such that
$s_1/2 \leq |z-x| \leq s_2$ we have
$$
\int_{V \cap B(x,s_1/4)^c} G_{\psi}(w,y) \nu(y-z) dy = \int_{V \cap B(x,s_1/4)^c} G_{\psi}(y,w) \nu(y-z) dy \leq C_{13}(X,s_1) \, \delta
$$
and
$$
\int_{B(x,s_1/4)} G_{\psi}(w,y) \nu(y-z) dy \leq \sup_{|v| \geq \frac{s_1}{4}} \nu(v) \, \ex^w [\tau_{B(x,s_1)}]
\leq \sup_{|v| \geq \frac{s_1}{4}} \nu(v) \, \ex^0 [\tau_{B(0,2s_1)}].
$$
(Since we use here the original notation, $V$ now means the set defined in \cite[(4.2)]{bib:BKK}, and $G_{\psi}(w,y)$ is the potential kernel as in \cite[p.492]{bib:BKK}.)
Moreover, for $|z-x| > s_2$, in which case necessarily $z  \in V^c$, we get
\begin{align*}
\int_{V \cap B(x,s_1/4)^c} G_{\psi}(w,y) \nu(y-z) dy & \leq C_{13}(X,s_1) \nu(B(x,s_1)-z) \\ & \leq C_{13}(X,s_1) K^X_2(s_1,s_2,\infty) |B(0,s_1)| \, \nu(x-z)
\end{align*}
and
\begin{align*}
\int_{B(x,s_1/4)} G_{\psi}(w,y) \nu(y,z) dy & \leq K^X_2(s_1,s_2,\infty) \, \ex^w [\tau_{B(x,s_1)}]  \, \nu(x-z) \\
& \leq K^X_2(s_1,s_2,\infty) \, \ex^0 [\tau_{B(0,2s_1)}]  \, \nu(x-z).
\end{align*}
Combining these estimates, we conclude similarly as in \cite[Lem. 4.9]{bib:BKK} that the claimed bound holds with
the kernel $\tilde{\pi}_{\psi}(z)$ replaced by $\tilde{\pi}_{s_1,s_2}(z-x)$. Since $\varrho = \varrho\left(\overline B(x,q),B(x,R))\right) \leq C_3(X,s_1)$, the proof of \eqref{eq:harm_est} can be completed in the same way as in \cite[Th. 3.4]{bib:BKK}.

To complete the proof of the lemma, we follow the argument leading from (a) to (b) in the proof of \cite[Lem. 3.2]{bib:BKK}. By regular
$(X,\eta)$-harmonicity of $f$ we get
\begin{align*}
f(y) & = \ex^y\left[e^{-\eta \tau_{B\left(x,\frac{s_1}{16}\right)}} f(X_{\tau_{B\left(x,\frac{s_1}{16}\right)}});
 X_{\tau_{B\left(x,\frac{s_1}{16}\right)}}
\in B\left(x,\frac{s_1}{8}\right) \right] \\
& \ \ \ \ + \ex^y\left[e^{-\eta \tau_{B\left(x,\frac{s_1}{16}\right)}}
f(X_{\tau_{B\left(x,\frac{s_1}{16}\right)}});
X_{\tau_{B\left(x,\frac{s_1}{16}\right)}} \in B\left(x,\frac{s_1}{8}\right)^c \right] = {\rm I} + {\rm II},
\end{align*}
whenever $|y-x| < s_1/32$. By the Ikeda-Watanabe formula \eqref{eq:IWF} we furthermore have
\begin{align*}
{\rm II} & \leq \ \ex^y \left[\int_0^{\tau_{B\left(x,\frac{s_1}{16}\right)}} e^{-\eta t} dt \right]
\Bigg(\sup_{|w|   >\frac{s_1}{16}} \nu(w) \int_{\frac{s_1}{8} \leq |z-x| \leq s_2} f(z) dz \\
& \qquad \qquad \qquad \qquad
+ K^X_2\left(\frac{s_1}{16}, s_2, \infty \right) \int_{|z-x| > s_2} f(z) \nu(z-x) dz \Bigg)\\
& \leq
\frac{1}{\eta} \Bigg(\sup_{|w|>\frac{s_1}{16}} \nu(w) \int_{\frac{s_1}{8} \leq |z-x| \leq s_2} f(z) dz
+ K^X_2\left(s_1, s_2 , \infty \right) \int_{|z-x| > s_2} f(z) \nu(z-x) dz \Bigg).
\end{align*}
Similarly, by the version of \cite[Lem. 3.1]{bib:BKK}, see also \cite[Ex. 5.9]{bib:BKK}, for the subprocess of
$\pro X$ corresponding to the multiplicative functional $M_t=e^{-\eta t}$ and \eqref{eq:harm_est}, we obtain
\begin{align*}
{\rm I}
& \leq
\ \ex^y \left[e^{-\eta \tau_{B\left(x,\frac{s_1}{16}\right)}};
X_{\tau_{B\left(x,\frac{s_1}{16}\right)}} \in B\left(x,\frac{s_1}{8}\right) \right] \, \sup_{|y-x| <
\frac{s_1}{8}} f(y) \\ & \leq \frac{C_3\left(X, \frac{s_1}{16}\right)}{\eta} \int_{|z-x|    > \frac{s_1}{2}} f(z)
\pi_{s_1,s_2}(z-x) dz.
\end{align*}
By putting together the estimates of I and II above \eqref{eq:harmest} follows.
\end{proof}

\begin{remark}
{\rm
The estimate \eqref{eq:harmest} is somewhat laborious (with no apparent ways to simplify the argument), however, it has the correct structure required by the applications of Lemma \ref{lm:bhi} in the next two subsections. In Lemma \ref{lem:defic1} following below, \eqref{eq:harmest} will be iterated infinitely many times resulting in a self-improving estimate which leads to an upper bound of harmonic functions controlled by $\nu$ alone. To realise this for arbitrarily small $\eta >0$, we need to ensure that both prefactors $h_1, h_2$ in \eqref{eq:harmest} are small enough. This requires to work with a sufficiently large domain of harmonicity (large $s_1 >0$) and introduce the additional control parameters $r_2 \gg r_1$ and $r_3 > r_2$. Another significant difference between \eqref{eq:harmest} and the original bound in \cite{bib:BKK} is that our function $K_2^X$ appearing in the expression of $C_{13}$
depends only on the form of $\nu(y)$ for $s_1/4 \leq |y| \leq s_1 + s_1/4$ but not for $|y| \geq r_2$. The use of this construction will get fully transparent in the proof of our main technical result in Theorem \ref{thm:defic3}, where we deal with L\'evy measures fast decaying at infinity.
}
\end{remark}

\subsection{Upper bound for functions harmonic at infinity}
The next lemma is the first key technical result of this paper and it will be fundamental for our investigations
below. Recall that the functions $K^X_1, K^X_2$ are defined in \eqref{def:K1}-\eqref{def:K2}.
\begin{lemma}
\label{lem:defic1}
Let $\eta>0$ and $\pro X$ be a L\'evy process with L\'evy-Khinchin exponent $\psi$ given by
\eqref{eq:Lchexp}-\eqref{eq:nuinf} such that Assumptions (A1)--(A3) hold. Moreover, suppose
that there exist $r_1 \geq 1$, $r_2 \geq 2r_1$ and $r_3 > r_2$ such that
\begin{align} \label{eq:cond1}
2 \, C^4_5 \, h_1(X, r_1, r_2) \, K^X_1(r_2) +
h_2(X, r_1) \, |B(0,r_2)| \, K^X_2(r_2, r_3, \infty)< \eta.
\end{align}
Then for every bounded function $f \geq 0$ which is $(X,\eta)$-harmonic in $\overline B(0,r)^c$ for some
$r   > 0$, we have
$$
f(x) \leq C_{14} \left\|f\right\|_{\infty} \nu(x), \quad |x| \geq R+1,
$$
with $R=R(X,\eta):= (r+r_1) \vee r_3$ and
$$
C_{14}=C_{14}(X,\eta):= \frac{C_5^2C_6^{\left\lceil R \right\rceil}(1+C_8)\left(h_1(X,r_1, r_2) +
h_2(X, r_1)\frac{1}{\inf_{|y| \leq r_2} \nu(y)}\right)|B(0,R)|}{\eta - h_1(X,r_1, r_2) \, K^X_1(r_2)-
h_2(X, r_1) |B(0,r_2)| K^X_2(r_2, r_3, \infty)}.
$$
\end{lemma}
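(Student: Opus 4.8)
\emph{Proof plan.} The plan is to feed the local estimate of Lemma~\ref{lm:bhi} into an infinite self-improving scheme driven by \eqref{eq:cond1}.

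First I would fix $x$ with $|x| > R$. Since $R \ge r + r_1$ we have $\overline{B(x,r_1)} \subset \overline{B(0,r)}^c$, so $f$ is regular $(X,\eta)$-harmonic in $B(x,r_1)$; applying Lemma~\ref{lm:bhi} with $s_1 = r_1$, $s_2 = r_2$ and reading \eqref{eq:harmest} at $y = x$ yields
\[
f(x) \le \frac{h_1(X,r_1,r_2)}{\eta}\int_{|z-x|>r_2} f(z)\,\nu(z-x)\,dz + \frac{h_2(X,r_1)}{\eta}\int_{r_1/8 < |z-x| \le r_2} f(z)\,dz, \qquad |x| > R .
\]
This is the only input from Lemma~\ref{lm:bhi}; everything else is a bootstrap of this one inequality.

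Now suppose $f(z) \le A\,\nu(z)$ for all $|z| > R$, with $A$ finite, and split each integral at the sphere $|z| = R$. On $\{|z| \le R\}$, where $z-x$ stays bounded away from the origin since $|x| > R$, I bound $f(z) \le \|f\|_\infty$ and, iterating (A1.2) together with $\nu \asymp C_5 g$, get $\nu(z-x) \le C_5^2 C_6^{\lceil R\rceil}\nu(x)$; for the second integral one also notes that its domain intersected with $\{|z|\le R\}$ is empty unless $R < |x| \le R + r_2$, on which range a further use of (A1.2) gives $\nu(x)^{-1} \le C_5^2 C_6^{\lceil R\rceil}\big(\inf_{|y|\le r_2}\nu(y)\big)^{-1}$; a boundary‑shell correction via \eqref{eq:b2} then produces a ``free term'' bounded by $b\,\|f\|_\infty\,\nu(x)$ with $b := \eta^{-1}C_5^2 C_6^{\lceil R\rceil}(1+C_8)|B(0,R)|\big(h_1(X,r_1,r_2) + h_2(X,r_1)/\inf_{|y|\le r_2}\nu(y)\big)$. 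On $\{|z| > R\}$ I use $f(z) \le A\nu(z)$: since $R > r_2$, the definition \eqref{def:K1} of $K^X_1$ gives $\int_{|z-x|>r_2,\,|z|>R}\nu(z)\nu(z-x)\,dz \le K^X_1(r_2)\nu(x)$, and since $|x| > R \ge r_3$ with $|z-x|\le r_2$ the definition \eqref{def:K2} gives $\nu(z)\le K^X_2(r_2,r_3,\infty)\nu(x)$ on the (volume $\le|B(0,r_2)|$) domain of the second integral. Collecting terms,
\[
f(x) \le \theta A\,\nu(x) + b\,\|f\|_\infty\,\nu(x), \qquad |x| > R, \qquad \theta := \frac{h_1(X,r_1,r_2)K^X_1(r_2) + h_2(X,r_1)K^X_2(r_2,r_3,\infty)|B(0,r_2)|}{\eta}.
\]
By \eqref{eq:cond1} and $C_5\ge 1$ we have $\theta<1$, and a direct check gives $b\,\|f\|_\infty/(1-\theta) = C_{14}\|f\|_\infty$. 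Hence, starting from any finite $A_0$ with $f \le A_0\nu$ on $\{|x|>R\}$ and iterating $A_{n+1}=\theta A_n + b\|f\|_\infty$, induction gives $f \le A_n\nu$ on $\{|x|>R\}$ for every $n$, and $A_n\to C_{14}\|f\|_\infty$, whence $f(x)\le C_{14}\|f\|_\infty\,\nu(x)$ for $|x|>R$, in particular for $|x|\ge R+1$.

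The step I expect to be the main obstacle is the \emph{initialization}: producing a finite $A_0$ with $f \le A_0\nu$ on $\{|x|>R\}$. Since $\nu(x)\to 0$, boundedness of $f$ does not give this, and by Theorem~\ref{prop:lowerboundharm}(2) the finiteness of $\sup_{|x|>R} f(x)/\nu(x)$ genuinely fails if (A1.2) or (A1.3) is dropped, so it must be extracted from the hypotheses and from the estimate itself. I would handle it by a truncation: put $M_\rho := \sup_{R<|x|\le\rho} f(x)/\nu(x)$, finite for each $\rho$ because $f$ is bounded and $\nu$ is bounded below on $\{R<|x|\le\rho\}$, then rerun the split above with $\{|z|>R\}$ replaced by $\{R<|z|\le\rho\}\cup\{|z|>\rho\}$, using $f(z)\le M_\rho\nu(z)$ on the first piece and $f(z)\le\|f\|_\infty$ on the second, and exploiting that the contribution of $\{|z|>\rho\}$ to the jump integral at a fixed $x$ tends to $0$ as $\rho\to\infty$ (as $\nu$ is a finite measure on $\{|z|>r_2\}$ and $f$ is bounded). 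Passing to the limit then turns the estimate into a self‑consistent inequality which, once the finiteness of $M_\infty$ is secured, closes exactly as above; verifying that finiteness is the delicate point and is where the full strength of the jump‑paring hypotheses (A1) is used.
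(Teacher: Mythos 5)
Your splitting of the integrals, your identification of the coefficients $\theta$ (the paper's $c_2$) and $b$ (the paper's $c$), and your observation that $b/(1-\theta)$ matches $C_{14}$ are all in line with the paper's proof, as is the initial application of Lemma~\ref{lm:bhi}. The genuine gap is exactly where you flagged it: the initialization. A one-variable contraction $A_{n+1}=\theta A_n + b\|f\|_\infty$ needs a finite $A_0$ with $f\le A_0\nu$ on $\{|x|>R\}$, which you cannot extract from mere boundedness of $f$ since $\nu\to 0$. Your truncation scheme with $M_\rho$ does not close: for $x$ near the truncation radius $\rho$, the leftover term $\|f\|_\infty\int_{|z-x|>r_2,\,|z|>\rho}\nu(z-x)\,dz$ is of order $\nu(B(0,r_2)^c)$ and, after dividing by $\nu(x)$, blows up as $\rho\to\infty$; so $\sup_{R<|x|\le\rho}$ of the error is not small and the self-consistency inequality for $M_\rho$ never absorbs the remainder.

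The paper avoids the initialization problem by running a two-track iteration rather than a contraction on a ratio. From the one-step estimate it first derives the \emph{crude} bound
$$
f(x)\ \le\ \|f\|_\infty\big(c\,\nu(x)+c_1\big),\qquad |x|\ge R+1,
$$
where $c_1=\eta^{-1}\bigl[h_1\,\nu(B(0,r_2)^c)+h_2\,|B(0,r_2)|\bigr]$ is an \emph{additive constant}, not a multiple of $\nu$. Iterating then gives
$$
f(x)\ \le\ c\,\|f\|_\infty\!\sum_{i=1}^{p}c_2^{\,i-1}\,\nu(x)\ +\ \|f\|_\infty\,c_1^{\,p},\qquad |x|\ge R+1,
$$
which needs no a priori domination by $\nu$; the only requirement is $c_1<1$ and $c_2<1$. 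This is precisely where the otherwise mysterious factor $2C_5^4$ in \eqref{eq:cond1} and Lemma~\ref{lem:tail_dom} enter: Lemma~\ref{lem:tail_dom} gives $\nu(B(0,r_2)^c)\le 2C_5^4\,K^X_1(r_2)$, and together with $K^X_2\ge 1$ this forces $c_1<1$ under \eqref{eq:cond1}. Your proposal never invokes Lemma~\ref{lem:tail_dom} and never uses the $2C_5^4$ slack (you only need $\theta<1$, which a weaker hypothesis would already give), which is the tell that something is missing. If you replace your contraction ansatz by the paper's two-component induction starting from $f\le\|f\|_\infty$, the rest of your computation goes through verbatim.
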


\begin{proof}
Fix $\eta, r >0$, $r_1>1$, $r_2 \geq 2r_1$ and $r_3 > r_2$ as in the statement. Denote
$R:= (r+r_1) \vee r_3$. By $(X,\eta)$-harmonicity of the function $f$ in $\overline{B}(0,r)^c$ we have
that $f(y) = \ex^y [e^{-\eta \tau_{B(x,r_1)}} f(X_{\tau_{B(x,r_1)}}]$, $y \in B(x,r_1)$, whenever $|x|
> R$. Thus by Lemma \ref{lm:bhi} and \eqref{eq:b2}, for every $|x| \geq R+1$ we obtain
\begin{eqnarray*}
f(x)
& \leq &
\frac{1}{\eta} \, \left(h_1(X,r_1, r_2) \int_{|z-x| > r_2}f(z) \nu(z-x)dz +
h_2(X, r_1) \int_{\frac{r_1}{8} < |x-z| \leq r_2} f(z)dz \right) \nonumber \\
&=&
\frac{1}{\eta} \, \left(h_1(X,r_1, r_2) \int_{|z-x|    > r_2 \atop |z| \leq R+1 }f(z)
\nu(z-x)dz + h_2(X, r_1) \int_{\frac{r_1}{8} < |x-z| \leq r_2 \atop |z| \leq R+1} f(z)dz \right)
\nonumber \\
& \qquad + &
\frac{1}{\eta} \, \left(h_1(X,r_1, r_2) \int_{|z-x|    > r_2 \atop |z| > R+1 }f(z) \nu(z-x)dz +
h_2(X, r_1) \int_{\frac{r_1}{8} < |x-z| \leq r_2 \atop |z|    > R+1} f(z)dz \right) \nonumber \\
& \leq &
\frac{1+C_8}{\eta} \left(h_1(X,r_1, r_2) + \frac{h_2(X,r_1)}{\inf_{|y|\leq r_2}\nu(y)}\right)
\|f\|_\infty \int_{|z| \leq R} \nu(x-z)dz \nonumber \\
& \qquad + &
\frac{1}{\eta} \, \left(h_1(X,r_1, r_2) \int_{|z-x| > r_2 \atop |z| > R+1}f(z)\nu(z-x)dz +
h_2(X, r_1) \int_{\frac{r_1}{8} < |x-z| \leq r_2 \atop |z|    > R+1} f(z)dz \right).
\end{eqnarray*}
Furthermore, by (A1.1)-(A1.2),
\begin{align} \label{eq:proc1}
f(x) \leq c \left\|f\right\|_{\infty} \nu(x)
+ \frac{1}{\eta} \, \left(h_1(X,r_1, r_2) \int_{|z-x| > r_2 \atop |z| > R+1 }f(z)
\nu(z-x)dz + h_2(X, r_1) \int_{\frac{r_1}{8} < |x-z| \leq r_2 \atop |z|    > R+1} f(z)dz \right),
\end{align}
where
$$
c:= \frac{C_5^2C_6^{\left\lceil R \right\rceil}(1+C_8)}{\eta} \left(h_1(X,r_1, r_2) +
h_2(X, r_1)\frac{1}{\inf_{|y| \leq r_2} \nu(y)}\right)|B(0,R)|.
$$
This gives
\begin{align} \label{eq:proc2}
f(x) & \leq  \|f\|_{\infty} \, (c \nu(x) + c_1), \quad |x| \geq R+1,
\end{align}
with
$$
c_1=\frac{1}{\eta}\left[h_1(X,r_1, r_2) \nu(B(0,r_2)^c) + h_2(X, r_1) |B(0,r_2)|\right].
$$
Also, write
$$
c_2=\frac{1}{\eta} \left(h_1(X,r_1, r_2) \, K^X_1(r_2)+
h_2(X, r_1) |B(0,r_2)| K^X_2(r_2, r_3, \infty)\right).
$$
Assumption \eqref{eq:cond1} and Lemma \ref{lem:tail_dom} imply that $c_1 \vee c_2 < 1$, which will be essential in what follows.

We now show that for every $p \in \N$
\begin{align} \label{eq:eq_ind}
f(x) \leq c \left\|f\right\|_{\infty} \sum_{i=1}^p c_2^{i-1} \nu(x) +
\left\|f\right\|_{\infty} c_1^p , \quad |x| \geq R+1.
\end{align}
Notice that if this holds, then by taking the limit $p \to \infty$ it follows that
$$
f(x) \leq \frac{c}{1-c_2}\left\|f\right\|_{\infty} \nu(x), \quad |x| \geq R+1,
$$
which is the bound stated in the lemma.

To prove \eqref{eq:eq_ind} we make induction on $p \in \N$. First observe that \eqref{eq:proc2} is just \eqref{eq:eq_ind}
for $p=1$. Suppose now that \eqref{eq:eq_ind} is true for $p-1 \in \N$. By \eqref{eq:proc1} and the induction hypothesis
we see for all $|x| \geq R+1$ that
\begin{align*}
f(x) & \leq c \left\|f\right\|_{\infty} \nu(x) + \frac{c\left\|f\right\|_{\infty}}{\eta}
\sum_{i=1}^{p-1} c_2^{i-1} \Bigg(h_1(X,r_1, r_2) \left\|f\right\|_{\infty}
\int_{|z-x|    > r_2 \atop |z| > R+1 }\nu(z) \nu(z-x)dz \\
& \ \ \ \ \ \ \ \ \ \ \ \ \ \ \ \ \ \ \ \ \ \ \ \ \ \ \ \ \ \ \ \ \ \ \ \ \ \ \ \ \ \ \ \ \ \ \ \ \ \ \ \ \ \ \ \
\qquad + h_2(X, r_1) |B(0,r_2)| \sup_{|z-x| \leq r_2} \nu(z) \Bigg) \\
& \quad + \frac{\left\|f\right\|_{\infty} c_1^{p-1}}{\eta}  \, \left(h_1(X,r_1, r_2)
\nu(B(0,r_2)^c) + h_2(X, r_1) |B(0,r_2)| \right).
\end{align*}
Applying \eqref{def:K1} to the integral in the second summand and using the definition of the
constants $c_1, c_2$ gives
\begin{align*}
f(x) & \leq c \left\|f\right\|_{\infty} \nu(x) \\
& \quad
+ c\left\|f\right\|_{\infty}  \sum_{i=1}^{p-1} \frac{c_2^{i-1}}{\eta} \left[h_1(X,r_1, r_2)
K^X_1(r_2) + h_2(X, r_1) |B(0,r_2)| K^X_2(r_2, r_3, \infty) \right] \nu(x) \\
& \quad +
\left\|f\right\|_{\infty} c_1^{p-1}  \, \frac{h_1(X,r_1, r_2) \nu(B(0,r_2)^c)+
h_2(X, r_1) |B(0,r_2)| }{\eta} \\
& \leq
c\left\|f\right\|_{\infty} \sum_{i=1}^{p} c_2^{i-1} \nu(x) + \left\|f\right\|_{\infty} c_1^{p},
\end{align*}
which completes the proof.
\end{proof}

The power of Lemma \ref{lem:defic1} depends on the verifiability of condition (\ref{eq:cond1}). As it will turn out, whether the L\'evy intensity has a slow or quick decay will make a difference, and the latter case is more difficult.

First we apply Lemma \ref{lem:defic1} to intensities which are slowly decaying in the sense of the following assumption:
\begin{align} \label{eq:nu_dbl}
  \begin{array}{c}
	 \mbox{there exists \ $C_{15}> 0$ \ such that for all \ $r \geq 1$ we have}  \\
   \mbox{$\nu(x-y) \leq C_{15} \nu(x)$ \ whenever \ $|y| \leq r$ \ and \ $|x| \geq 2r$.}
  \end{array}
\end{align}
\smallskip
\noindent
This property typically holds for functions polynomially decaying at infinity.
It is straightforward to check that under \eqref{eq:nu_dbl} we
have
\begin{align} \label{eq:bdd_by_tail}
K_1^X(s) \leq C_5^2 C_{15} \nu(B(0,s)^c), \quad \text{for any} \ \ s \geq 1,
\end{align}
and
$$
K_2^X(s_1, s_2, s_3) \leq C_{15}, \quad \text{for every} \ \ s_1 \geq 1, \ s_2 \geq 2s_1 \ \text{and} \ s_2 < s_3 \leq \infty.
$$
This immediately gives that under assumption \eqref{eq:nu_dbl} conditions \eqref{eq:intr_killing}-\eqref{eq:unif_bdd} hold
automatically with $\kappa_1 = 2$ and $\kappa_2=C_{15}$. By this fact, in the following theorem our standard set of assumptions
\eqref{eq:intr_killing}, \eqref{eq:unif_bdd}, \eqref{eq:Green_bdd} simplifies to \eqref{eq:nu_dbl} and \eqref{eq:Green_bdd}.

\begin{theorem}
\label{thm:defic2}
Let $\pro X$ be a L\'evy process with L\'evy-Khinchin exponent $\psi$ given by \eqref{eq:Lchexp}-\eqref{eq:nuinf} such that
Assumptions (A1)--(A3) hold. Moreover, suppose that conditions \eqref{eq:nu_dbl} and \eqref{eq:Green_bdd} hold. Then for every
$\eta   >0$ and every bounded non-negative function $f$ which is $(X,\eta)$-harmonic in $\overline B(0,r)^c$ for some $r > 0$,
there exist $C_{16} =C_{16}(X,\eta)$ and $R=R(X,\eta)$ such that
$$
f(x) \leq C_{16} \left\|f\right\|_{\infty} \nu(x), \quad |x| \geq R.
$$
\end{theorem}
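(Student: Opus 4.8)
The plan is to obtain Theorem \ref{thm:defic2} as a direct consequence of Lemma \ref{lem:defic1}: it suffices to produce radii $r_1\ge 1$, $r_2\ge 2r_1$, $r_3>r_2$ (depending on $X$ and $\eta$) for which the balancing condition \eqref{eq:cond1} holds, and then invoke that lemma with $C_{16}:=C_{14}$ and the radius in the statement taken to be $R+1$, where $R=(r+r_1)\vee r_3$. Under \eqref{eq:nu_dbl} we have at our disposal the two simplifications recorded just above the theorem, namely $K^X_1(s)\le C_5^2C_{15}\,\nu(B(0,s)^c)$ (this is \eqref{eq:bdd_by_tail}) and $K^X_2(s_1,s_2,s_3)\le C_{15}$ for all admissible arguments. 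The subtlety is that the second summand of \eqref{eq:cond1} carries the volume factor $|B(0,r_2)|\asymp r_2^d$, which forbids sending $r_2\to\infty$ on its own; instead I would scale all three radii together, e.g. $r_2=2r_1$ and $r_3=3r_1$, and show that the whole left-hand side of \eqref{eq:cond1} tends to $0$ as $r_1\to\infty$, which then forces \eqref{eq:cond1} for all large $r_1$.

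For the first summand $2C_5^4\,h_1(X,r_1,r_2)\,K^X_1(r_2)$ I would first check that $h_1(X,r_1,r_2)$ stays bounded as $r_1\to\infty$, uniformly in $r_2\ge 2r_1$. By \eqref{def:C9} and the doubling of $\Psi$ we have $C_3(X,r_1/16)\le c\,\Psi(1/r_1)$; by \eqref{eq:gen_est_tau} and the same doubling we have $\ex^0[\tau_{B(0,2r_1)}]\le c/\Psi(1/r_1)$; and by \eqref{eq:Green_bdd}, \eqref{eq:gen_est_tau} and $K^X_2\le C_{15}$ we have $C_{13}(X,r_1)\le c/(\Psi(1/r_1)r_1^d)$, hence $C_{13}(X,r_1)\,|B(0,r_1)|\le c/\Psi(1/r_1)$. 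Feeding these into the definition of $h_1$ and using $K^X_2(r_1,r_2,\infty)\le C_{15}$ gives $h_1(X,r_1,r_2)\le c$. Combined with \eqref{eq:bdd_by_tail} and $r_2=2r_1$, the first summand is at most $c\,\nu(B(0,2r_1)^c)$, which tends to $0$ as $r_1\to\infty$ since $\nu$ is a Lévy measure.

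For the second summand $h_2(X,r_1)\,|B(0,r_2)|\,K^X_2(r_2,r_3,\infty)$ I would again use $K^X_2(r_2,r_3,\infty)\le C_{15}$ and estimate $h_2(X,r_1)$ term by term with the same tools: $C_3(X,r_1/16)C_3(X,r_1)C_{13}(X,r_1)\le c\,\Psi(1/r_1)/r_1^d$, $C_3(X,r_1/16)\,\ex^0[\tau_{B(0,2r_1)}]\,\sup_{|y|\ge r_1/4}\nu(y)\le c\,\sup_{|y|\ge r_1/4}\nu(y)$, and the remaining term is $\sup_{|y|\ge r_1/16}\nu(y)$. By the monotonicity of the profile $g$ from Assumption (A1.1) together with $\nu\asymp C_5g$ one has $\sup_{|y|\ge r}\nu(y)\le c\,\nu(B(0,r/2)^c)/r^d$, so altogether $h_2(X,r_1)\le c\,(\Psi(1/r_1)+\nu(B(0,r_1/32)^c))/r_1^d$. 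Multiplying by $|B(0,r_2)|\asymp(2r_1)^d$ and by $C_{15}$ gives a bound of the order of $\Psi(1/r_1)+\nu(B(0,r_1/32)^c)$, which tends to $0$ (for $\Psi(1/r_1)\to 0$ one may invoke \eqref{eq:PruitH}). This step is the main obstacle: $h_2(X,r_1)$ must decay \emph{strictly faster} than $r_1^{-d}$ in order to absorb the volume growth of $B(0,r_2)$, and this is exactly what \eqref{eq:Green_bdd} secures for the $K^X_3$-part of $C_{13}$, while the monotonicity of $g$ is what converts the pointwise suprema of $\nu$ into tail bounds carrying the extra power $r_1^{-d}$.

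Combining the two estimates, the left-hand side of \eqref{eq:cond1} is at most a constant times $\nu(B(0,2r_1)^c)+\Psi(1/r_1)+\nu(B(0,r_1/32)^c)$, which tends to $0$ as $r_1\to\infty$. Hence one may fix $r_1=r_1(X,\eta)$ (and $r_2=2r_1$, $r_3=3r_1$) for which \eqref{eq:cond1} holds, and Lemma \ref{lem:defic1} then yields $f(x)\le C_{14}(X,\eta)\|f\|_\infty\nu(x)$ for $|x|\ge R+1$ with $R=(r+r_1)\vee r_3$, which is the claimed bound with $C_{16}:=C_{14}$.
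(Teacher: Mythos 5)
Your proof is correct and follows essentially the same strategy as the paper's: scale $r_1$, $r_2$, $r_3$ proportionally, bound $h_1(X,r_1,r_2)$ by a constant using \eqref{def:C9}, \eqref{eq:gen_est_tau}, \eqref{eq:Green_bdd} and $K^X_2\le C_{15}$, so that the first summand of \eqref{eq:cond1} is controlled by $K^X_1(r_2)\le C_5^2C_{15}\nu(B(0,r_2)^c)\to 0$, and show that $r_1^d h_2(X,r_1)\to 0$ so that the second summand (with its $|B(0,r_2)|\asymp r_1^d$ factor) also vanishes; the only cosmetic difference is the choice $r_3=3r_1$ versus the paper's $r_3=4s$, which is immaterial since $K^X_2(r_2,r_3,\infty)\le C_{15}$ in either case. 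Your conversion of $\sup_{|y|\ge r}\nu(y)$ into $c\,\nu(B(0,r/2)^c)/r^d$ via monotonicity of the profile $g$ plays exactly the role of the paper's observation that $g(s)s^d\to 0$, so no gap there.
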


\begin{proof}
Notice that whenever $s \geq 4$, under \eqref{eq:nu_dbl} we have $K^X_2(s/4, s/2, s) \leq C_{15}$, $K^X_2(s, 2s, \infty)
\leq C_{15}$, and $K^X_2(2s, 4s, \infty) \leq C_{15}$. Thus by \eqref{def:C9}, \eqref{eq:gen_est_tau} and \eqref{eq:Green_bdd}
we also have $h_1(X,s,2s) \leq c_1$ for the same $s$. This and \eqref{eq:bdd_by_tail} jointly imply that
$$
h_1(X,s,2s) K^X_1(2s) \leq c_1  K^X_1(2s) \to 0 \quad \text{as} \quad s \to \infty.
$$
Similarly, by \eqref{def:C9}, \eqref{eq:gen_est_tau}, \eqref{eq:Green_bdd} and the fact that $g(s)s^d \to 0$ as $s \to \infty$,
we get
$$
s^d \, h_2(X, s) \, K^X_2(2s, 4s, \infty) \to 0 \quad \text{as} \quad s \to \infty.
$$
To complete the proof, observe that for every $\eta>0$ there exists $s \geq 4$ such that condition \eqref{eq:cond1}
holds with $r_1=s$, $r_2=2r_1=2s$ and $r_3 = 4s > r_2$. The claimed bound follows from Lemma \ref{lem:defic1}.
\end{proof}

\subsection{Upper bound for the Laplace transform of the first hitting time of a ball}
\noindent
A specific harmonic function to which we need to apply the results of the previous subsection in
order to study the decay of eigenfunctions is of the type
\begin{align} \label{eq:modelhf}
f(x) =  \left\{
\begin{array}{lrl}
\ex^x\left[e^{-\eta \tau_{\overline B(0,r)^c}} \right] & \mbox{  for  } & |x| > r,
\vspace{0.2cm} \\
\1_{\{|x| \leq r\}}(x)  & \mbox{  for  } & |x| \leq r.
\end{array}\right.
\end{align}
Here we use the standard convention that $1/\infty = 0$.

Condition \eqref{eq:nu_dbl} fails to hold for L\'evy measures which are lighter than polynomial at infinity.
Although Theorem \ref{thm:defic2} cannot be directly extended to this class of processes, the structure involving the functions $h_1, h_2$ in \eqref{eq:harmest} allows to use Lemma \ref{lem:defic1} also in this case. However, for such L\'evy measures the function $K_2^X(s_1/4,s_1/2,s_1)$ appearing in $C_{13}(X,s_1)$ typically diverges faster than polynomially for large $s$, and $s_1^d \, h_2(X, s_1)$ does not vanish as $s_1 \to \infty$. This problem occurs when, for instance, $\nu(x)$ decays like $e^{-|x|^{\beta}}$, for some $\beta \in (0,1)$. This difficulty persists even when taking a fixed positive number $s_0$ instead of proportionally increasing the radius $s_1/4$ to replace $K_2^X(s_1/4,s_1/2,s_1)$ by $K_2^X(s_0,s_1/2,s_1)$. (Indeed, for L\'evy densities satisfying (A1) it is always true that we can choose a $C=C(s_0)$ such that $K_2^X(s_0,s_1/2,s_1) \leq C$ for large $s_1 > 0$, however, we see that in this case $C_3(X,s_1/16)$ in $h_2(X,s_1)$ must be replaced by a strictly positive constant independent of $s_1$ and again the expression $s_1^d \, h_2(X, s_1)$ cannot vanish as $s_1 \to \infty$.) Therefore, in Theorem \ref{thm:defic3} below giving the upper bound of \eqref{eq:modelhf} for L\'evy densities which are light at infinity we choose the strategy to keep all the radii in \eqref{eq:harmest} proportional to $s_1$. To get this result, we will use an argument based on the domination of $\nu$ by a carefully constructed family of jump intensities. Specifically, for any small $\eta >0$ we will choose a sufficiently large radius of harmonicity $s_1$ and construct a L\'evy process with jump intensity $\nu^{s_1}$ to which we can effectively apply Lemma \ref{lem:defic1} and which has the following properties:
\begin{itemize}
\item[(1)]
$\nu^{s_1}$ dominates $\nu$ inside $B(0,s_1)$ and agrees with $\nu$ outside $B(0,s_2)$ for sufficiently
large $s_2 \gg s_1$ proportional to $s_1$, but the difference $\nu^{s_1}- \nu$ is relatively small with respect to $\eta$.
\item[(2)]
the function \eqref{eq:modelhf} corresponding to $\nu$ is dominated by that of $\nu^{s_1}$ with $\eta/2$ at infinity.
\end{itemize}

To achieve this, we make use of the following comparison scheme.
\begin{proposition}
\label{prop:domination}
Let $(X^{(1)}_t)_{t \geq 0}$ and $(X^{(2)}_t)_{t \geq 0}$ be two L\'evy processes with characteristic exponents
$\psi^{(1)}, \psi^{(2)}$ as in \eqref{eq:Lchexp}, the same diffusion coefficient $A$, and L\'evy measures
$\nu^{(1)}, \nu^{(2)}$ satisfying \eqref{eq:nuinf} and such that $\sigma(dx)=\sigma(x)dx$ with 
$\sigma(x):= \nu^{(2)}(x) - \nu^{(1)}(x)$ for $x \neq 0$, and $\sigma(0):=0$, is a non-negative
finite measure. Denote $|\sigma|:= \sigma(\R^d)$. Then the following hold.
\begin{itemize}
\item[(1)]
For every $t>0$ and almost every $x \in \R^d$, we have
$$
e^{-|\sigma| t} p^{(1)}(t,x) \leq p^{(2)}(t,x) \leq e^{-|\sigma| t} p^{(1)}(t,x) + t
\, \sup_{z \in \R^d} \sigma(z), 
$$
where $p^{(1)}(t,x), \, p^{(2)}(t,x)$ are the transition densities of the processes $(X^{(1)}_t)_{t \geq 0}, (X^{(2)}_t)_{t \geq 0}$,
respectively.
\item[(2)]
For every $\eta > |\sigma|$ and almost every $x \in \R^d$, we have $G^{\eta}_1(x) \leq G^{\eta-|\sigma|}_2(x)$, where $G^{\eta}_{i}(x) = \int_0^{\infty} e^{-\eta t} p^{(i)}(t,x) dt$, $i=1,2$, are the $\eta$-potential kernels of the two processes, respectively.
\item[(3)]
If there exist $C_{17} \geq 1$ and $R > 0$ such that $p^{(1)}(t,x) \leq C_{17} p^{(1)}(t,y)$ for every $|x| \geq |y| \geq R$,
$|x-y| \leq 1$, and $t>0$, then for every $\eta > |\sigma|$ and $r>0$ there exists a constant $C_{18}=C_{18}(X^{(1)}, X^{(2)},\eta,r)$
such that for every $|x| > 3r + R$ we have
\begin{align} \label{eq:comp_Lapl}
\ex^x[e^{-\eta \tau^{(1)}_r}] \leq C_{17}^{2 \left\lceil 2r \right\rceil} C_{18} \ex^{x-x_{2r}}[e^{-(\eta - |\sigma|)
\tau^{(2)}_{r}}],
\end{align}
where $x_r:=(r/|x|) x$ and $\tau^{i}_r:= \inf\left\{t>0: X^{(i)}_t \in \overline{ B}(0,r)
\right\}$, $i=1,2$.
\end{itemize}
\end{proposition}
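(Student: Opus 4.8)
The plan is to treat the three parts in order, each building on the previous. For part (1), I would use a perturbation-series (Duhamel/Dyson) representation comparing the two semigroups. Writing $P^{(i)}_t$ for the convolution semigroup generated by $X^{(i)}$, the generators differ by the bounded convolution operator $Sf = \sigma * f$ (this is bounded on $L^1$ and $L^\infty$ because $\sigma$ is a finite measure with bounded density), so $P^{(2)}_t = P^{(1)}_t + \int_0^t P^{(1)}_{t-s} S P^{(2)}_s\, ds$. Iterating gives $p^{(2)}(t,\cdot) = \sum_{n\ge 0} q_n(t,\cdot)$ with $q_0 = p^{(1)}$ and $q_{n+1}(t,\cdot) = \int_0^t p^{(1)}(t-s,\cdot)*\sigma*q_n(s,\cdot)\,ds$. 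The lower bound $e^{-|\sigma|t}p^{(1)}(t,x) \le p^{(2)}(t,x)$ is cleanest obtained probabilistically: run $X^{(2)}$ as $X^{(1)}$ with extra independent jumps of intensity $\sigma$ arriving at rate $|\sigma|$; conditioning on zero extra jumps up to time $t$ (probability $e^{-|\sigma|t}$) already contributes $e^{-|\sigma|t}p^{(1)}(t,x)$ to the density, and all other terms are non-negative. For the upper bound, separate the $n=0$ term from the rest: $q_0 = p^{(1)} \le p^{(1)}$ trivially, while $\sum_{n\ge 1}q_n(t,x)$ is bounded by the probability of at least one extra jump times the sup of the resulting density; since after an extra jump the remaining density is a probability density convolved with $\sigma/|\sigma|$ times stuff, one gets the crude but sufficient bound $\sum_{n\ge1} q_n(t,x) \le t\sup_z\sigma(z)$ after collapsing the time-integrals (the total extra-jump mass up to time $t$ is $\le |\sigma| t$, but each such term loses a factor and the $\|\sigma\|_1$'s telescope against the $1/n!$'s — one should check the bookkeeping but it is routine). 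Actually the slickest route to the upper bound: $p^{(2)}(t,x) - e^{-|\sigma|t}p^{(1)}(t,x)$ equals the contribution of paths with at least one $\sigma$-jup; the \emph{last} $\sigma$-jump lands the path at a point distributed with density $\le \sup_z \sigma(z)$ (since the convolution of the pre-jump density with $\sigma$ is pointwise $\le \|p\|_1 \sup\sigma = \sup\sigma$), and integrating the time of that last jump over $[0,t]$ gives the factor $t$.

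For part (2), integrate the inequality from part (1) against $e^{-\eta t}$: $G^\eta_1(x) = \int_0^\infty e^{-\eta t}p^{(1)}(t,x)\,dt \le \int_0^\infty e^{-\eta t} e^{|\sigma| t} p^{(2)}(t,x)\,dt = \int_0^\infty e^{-(\eta-|\sigma|)t} p^{(2)}(t,x)\,dt = G^{\eta-|\sigma|}_2(x)$, valid since $\eta > |\sigma|$ guarantees convergence. This is immediate.

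Part (3) is where the real work lies and I expect it to be the main obstacle. The goal is to compare the Laplace transform of the hitting time of the ball $\overline B(0,r)$ for $X^{(1)}$ with that for $X^{(2)}$ \emph{started at a shifted point} $x - x_{2r}$. The strategy: first express $\ex^x[e^{-\eta\tau^{(1)}_r}]$ via a last-exit / Ikeda--Watanabe-type decomposition or, more directly, via the $\eta$-resolvent and the equilibrium measure of the ball — i.e., $\ex^x[e^{-\eta\tau_r^{(1)}}] = \int_{\overline B(0,r)} G^\eta_1(y-x)\,\mu^{\eta}(dy)$ for the $\eta$-equilibrium (capacitary) measure $\mu^\eta$ of $\overline B(0,r)$ relative to $X^{(1)}$; since this measure is supported in $\overline B(0,r)$, for $|x| > 3r+R$ the relevant values $G^\eta_1(y-x)$ have argument of modulus between $|x|-r$ and $|x|+r$, all $\ge R$. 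Now I use the pointwise doubling hypothesis $p^{(1)}(t,x) \le C_{17} p^{(1)}(t,y)$ for $|x|\ge|y|\ge R$, $|x-y|\le 1$: iterating it $\lceil 2r\rceil$ times (chaining along a segment of length $\le 2r$ in unit steps) and integrating in $t$ gives $G^\eta_1(z) \le C_{17}^{\lceil 2r\rceil} G^\eta_1(z')$ whenever $|z|\ge|z'|\ge R$ and $|z-z'|\le 2r$; this lets me replace all arguments $y-x$ (for $y\in\overline B(0,r)$) by the single argument $x - x_{2r}$ up to the factor $C_{17}^{2\lceil 2r\rceil}$ (one chain to align radial distance, accounting for the shift by $2r$, hence the square in the exponent). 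Then bound $G^\eta_1 \le G^{\eta-|\sigma|}_2$ by part (2). Finally, relate $G^{\eta-|\sigma|}_2(x-x_{2r})$ back to $\ex^{x-x_{2r}}[e^{-(\eta-|\sigma|)\tau^{(2)}_r}]$: since $x - x_{2r}$ has modulus $\ge |x|-2r > r + R$, it sits outside $\overline B(0,r)$, and a harmonicity/renewal argument (the $\eta$-potential of a point outside a set is controlled by the hitting-time Laplace transform times the potential restricted to the set, which is bounded below on $\overline B(0,r)$ by a positive constant $c(X^{(2)},\eta,r)$ by Assumption (A3)-type estimates on the Green function of a bounded ball) yields $G^{\eta-|\sigma|}_2(x-x_{2r}) \le c^{-1}\,\ex^{x-x_{2r}}[e^{-(\eta-|\sigma|)\tau^{(2)}_r}] \cdot (\text{finite factor})$, and we absorb everything into $C_{18}=C_{18}(X^{(1)},X^{(2)},\eta,r)$. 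The delicate points are (i) making the equilibrium-measure representation rigorous for the killed process and (ii) the direction of the last inequality — one wants an \emph{upper} bound on $G_2^{\eta-|\sigma|}$ outside the ball by the hitting probability, which follows because after hitting $\overline B(0,r)$ the process must still accumulate $\eta$-potential, and that residual potential is bounded above uniformly (the potential of a fixed ball is a bounded function), so $G_2^{\eta-|\sigma|}(w) \le \ex^w[e^{-(\eta-|\sigma|)\tau_r^{(2)}}]\cdot \sup_{y\in\overline B(0,r)} G_2^{\eta-|\sigma|,\overline B(0,r)^c\text{-reflected}}$-type bound; getting the constants clean here is the part requiring care, but no new ideas beyond standard potential theory for the subprocess killed at rate $\eta-|\sigma|$.
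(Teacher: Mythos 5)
Parts (1) and (2) are fine and essentially match the paper's proof: the paper also uses the compound Poisson decomposition $p^{(2)}(t,\cdot)=p^{(1)}(t,\cdot)*e^{t\sigma}$, reads off the lower bound from the $n=0$ term, and bounds the tail by $p^{(1)}(t,\cdot)*\sigma^{n*}\leq|\sigma|^{n-1}\sup\sigma$; your ``last extra jump'' version of the upper bound is an equivalent repackaging. Part (2) is indeed the one-line integration you describe.

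Part (3) contains a genuine gap at the closing step. After the chaining you arrive at
$\ex^x[e^{-\eta\tau^{(1)}_r}]\leq \mu^{\eta,(1)}_r(\overline B(0,r))\,C_{17}^{2\lceil 2r\rceil}\,G^{\eta-|\sigma|}_2(x-x_{2r})$,
and you then try to dominate the single value $G^{\eta-|\sigma|}_2(x-x_{2r})$ by $c\,\ex^{x-x_{2r}}[e^{-(\eta-|\sigma|)\tau^{(2)}_r}]$, arguing that ``after hitting $\overline B(0,r)$ the residual potential is bounded above uniformly.'' That is false: the residual quantity is $G^{\eta-|\sigma|}_2$ evaluated at the (random) entrance point in $\overline B(0,r)$, and $G^{\eta'}_2(y)$ diverges as $y\to 0$ for a wide class of these processes, so one cannot pull out a supremum. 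Patching this would need a Harnack-type comparison of $G^{\eta'}_2$ at distinct points, which would require a doubling hypothesis on $p^{(2)}$ — but your assumption concerns $p^{(1)}$ only. The paper avoids the issue precisely by \emph{not} collapsing to a single point: it shows
$\sup_{y\in\overline B(0,r)}G^\eta_1(x-y)\leq C_{17}^{2\lceil 2r\rceil}\inf_{y\in\overline B(x_{2r},r)}G^\eta_1(x-y)$,
then feeds the $\inf$ into the capacitory formula for $X^{(2)}$ on the translated ball via
$\inf_y G\cdot\mu^{\eta-|\sigma|,(2)}_{B(x_{2r},r)}(\overline B(x_{2r},r))\leq\int_{\overline B(x_{2r},r)}G^{\eta-|\sigma|}_2(x-y)\,\mu^{\eta-|\sigma|,(2)}_{B(x_{2r},r)}(dy)=\ex^{x-x_{2r}}[e^{-(\eta-|\sigma|)\tau^{(2)}_r}]$,
so the constant $C_{18}$ is simply the ratio $\mu^{\eta,(1)}_r(\overline B(0,r))/\mu^{\eta-|\sigma|,(2)}_r(\overline B(0,r))$ of the two capacitory masses, using translation invariance. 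You had the right ingredients (capacitory representation, chaining, part (2)); the missing idea is to keep the comparison as a $\sup$--$\inf$ pair over balls and reinsert the $\inf$ into the $X^{(2)}$-capacitory integral, rather than trying to bound a single pointwise value of $G^{\eta-|\sigma|}_2$ by the hitting-time Laplace transform.
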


\begin{proof}
First consider (1).
Recall that the compound Poisson measure corresponding to $\sigma$ is given by
$e^{t \sigma} = e^{-t |\sigma|} \sum_{n=0}^{\infty} \frac{t^n \sigma^{n*}}{n!}$, $t >0$. Since $\nu^{(2)} =
\nu^{(1)} + \sigma$, we can write
$$
p^{(2)}(t,x) = p^{(1)}(t,\cdot) * \exp( t \sigma)(x) = p^{(1)}(t,x) e^{- t |\sigma|} + e^{-t|\sigma|}
\sum_{n=1}^{\infty} \frac{t^n p^{(1)}(t,\cdot) * \sigma^{n*}(x)}{n!}, \ \ t>0, \, \text{a.e.} \, x \in \R^d.
$$
From this we see that the first inequality in (1) holds. To show the second, it suffices
to estimate the sum at the right hand side. We have
$$
p^{(1)}(t,\cdot) * \sigma^{n*}(x) =  \int_{\R^d} \sigma^{n*}(x-y) p(t,y) dy \leq \sup_{z \in \R^d} \sigma^{n*}(z)
\leq |\sigma|^{n-1} \sup_{z \in \R^d} \sigma(z),
$$
for every $t>0$, $n \in \N$ and almost every $x \in \R^d$. Hence,
$$
p^{(2)}(t,x) \leq e^{- t |\sigma|}p^{(1)}(t,x)  +  t  \sup_{z \in \R^d} \sigma(z) \, e^{-t|\sigma|}
\sum_{n=1}^{\infty} \frac{t^{n-1} |\sigma|^{n-1}}{n!} \leq e^{- t |\sigma|}p^{(1)}(t,x)  + t  \sup_{z \in \R^d} \sigma(z),
$$
for all $t>0$ and almost every $x \in \R^d$, which proves the claim.
Both inequalities in (1) directly extend to the case when $A$ is non-zero. Indeed, then for every fixed $t>0$ the resulting
transition density is a convolution of the Gaussian measure and the kernel corresponding to the jump part of the process,
for which the required bounds are proven above.

Assertion (2) is a direct consequence of the first inequality in (1), since for every $\eta>0$ and almost every $x \in \R^d$, we have
$G^{\eta}_{i}(x) = \int_0^{\infty} e^{-\eta t} p^{(i)}(t,x) dt$, $i=1,2$.

Now we show (3). Denote by $\mu^{\eta, (i)}_{B(x_0,r)}(dx)$ the $\eta$-capacitory measure of the ball
$B(x_0,r)$, $x_0 \in \R^d$, for the process $(X^{(i)}_t)_{t \geq 0}$ (see e.g. \cite[Sect. 2, Ch. II]{bib:Ber}).
It is known to be a Radon measure with support contained in $\overline{B}(x_0,r)$. By \cite[Th. 7 and pp.51-52]{bib:Ber}, for every $\eta>0$, $x \in \R^d$ and $r>0$ we
have
\begin{align} \label{eq:capacitory}
\ex^x[e^{-\eta \tau^{(i)}_{\overline B(x_0,r)^c}}] = \int_{\overline{B}(x_0,r)} G^{\eta}_i(x-y) \mu^{\eta, (i)}_{B(x_0,r)}(dy),
\quad i=1,2.
\end{align}
When $x_0=0$, we write $\tau_r^{(i)}$ and $\mu^{\eta, (i)}_r(dx)$ for a shorthand. From here, for every $\eta > |\sigma|$, $r>0$
and $|x| > 3r+R$ we obtain
$$
\ex^x[e^{-\eta \tau^{(1)}_r}] \leq \int_{\overline{B}(0,r)} G^{\eta}_1(x-y)
\mu^{\eta, (1)}_r(dy) \leq \sup_{y \in \overline B(0,r)} G^{\eta}_1(x-y) \ \mu^{\eta, (1)}_r (\overline{B}(0,r)).
$$
By the local uniform comparability of the densities $p^{(1)}(t,x)$, for every $|x|>3r+R$ we have
\begin{align*}
\sup_{y \in \overline B(0,r)} G^{\eta}_1(x-y)
& \leq
\int_0^{\infty} e^{-\eta t} \sup_{y \in \overline B(0,r)} p^{(1)}(t,x-y) dt
\leq
C_{17}^{\left\lceil 2r \right\rceil} \int_0^{\infty} e^{-\eta t} p^{(1)}(t,x-x_r) dt \\
& =
C_{17}^{2 \left\lceil 2r \right\rceil} \int_0^{\infty} e^{-\eta t} \inf_{y \in \overline B(x_{2r},r)} p^{(1)}(t,x-y) dt
\leq
C_{17}^{2 \left\lceil 2r \right\rceil} \inf_{y \in \overline B(x_{2r},r)} G^{\eta}_1(x-y).
\end{align*}
Moreover, $\mu^{\eta-|\sigma|, (2)}_r(\overline{B}(0,r)) = \mu^{\eta-|\sigma|, (2)}_{B(x_{2r},r)}
(\overline{B}(x_{2r},r))$. Putting together the estimates above and using (2), we finally obtain
\begin{align*}
\ex^x[ & e^{-\eta \tau^{(1)}_r}]
\leq
\mu^{\eta, (1)}_r(\overline{B}(0,r)) \sup_{y \in \overline B(0,r)} G^{\eta}_1(x-y) \\
& \leq
C_{17}^{2 \left\lceil 2r \right\rceil} \frac{\mu^{\eta, (1)}_r(\overline{B}(0,r))}{\mu^{\eta-|\sigma|, (2)}_r
(\overline{B}(0,r))}\mu^{\eta-|\sigma|, (2)}_{B(x_{2r},r)}(\overline{B}(x_{2r},r))
\inf_{y \in \overline B(x_{2r},r)} G^{\eta}_1(x-y) \\
& \leq
C_{17}^{2 \left\lceil 2r \right\rceil} \frac{\mu^{\eta, (1)}_r(\overline{B}(0,r))}{\mu^{\eta-|\sigma|, (2)}_r
(\overline{B}(0,r))} \int_{\overline B(x_{2r},r)} G^{\eta-|\sigma|}_2(x-y) \mu^{\eta-|\sigma|, (2)}_{B(x_{2r},r)}(dy)
  \\
& \leq
C_{17}^{2 \left\lceil 2r \right\rceil} \frac{\mu^{\eta, (1)}_r(\overline{B}(0,r))}{\mu^{\eta-|\sigma|, (2)}_r
(\overline{B}(0,r))} \ex^x[e^{-(\eta-|\sigma|) \tau^{(2)}_{\overline{B}(x_{2r},r)}}]
=
C_{17}^{2 \left\lceil 2r \right\rceil} \frac{\mu^{\eta, (1)}_r(\overline{B}(0,r))}{\mu^{\eta-|\sigma|, (2)}_r
(\overline{B}(0,r))} \ex^{x-x_{2r}}[e^{-(\eta-|\sigma|) \tau^{(2)}_r}],
\end{align*}
where $\tau^{(2)}_{\overline{B}(x_{2r},r)^c} = \inf\left\{t>0: X^{(2)}_t \in \overline{ B}(x_{2r},r) \right\}$.
\end{proof}

Making use of the above results, we can now derive an upper bound for the function defined in
\eqref{eq:modelhf} for symmetric jump-paring processes with L\'evy measures which are light at infinity in the sense that
\begin{align} \label{eq:fsm}
\int_{\R^d} |x|^2 \nu(dx) < \infty.
\end{align}
Since in this case property \eqref{eq:nu_dbl} in general does not hold, we require instead that
\begin{align} \label{eq:comp_dens}
  \begin{array}{c}
	 \mbox{there exist $C_{17} \geq 1$ and $R   >0$ such that $p(t,x) \leq C_{17} p(t,y)$}  \\
	 \mbox{for every $t>0$ and $|x| \geq |y| \geq R$ satisfying $|x-y| \leq 1$.}
	\end{array}
\end{align}

\vspace{0.1cm}

\noindent
This means that the transition probability densities in some sense inherit the properties (A1.1)--(A1.2) of the
L\'evy kernel, which is a reasonable requirement on the processes with jump intensities described by (A1). Note
also that due to condition \eqref{eq:fsm} we do not impose \eqref{eq:Green_bdd}, and only assume that the general estimate \eqref{eq:suf1} holds. Unlike the pivotal assumptions \eqref{eq:intr_killing}--\eqref{eq:unif_bdd}, both \eqref{eq:comp_dens} and \eqref{eq:suf1} should be seen as only technical assumptions providing a framework to our study. Note that they can be verified efficiently for a substantial subclass of jump-paring L\'evy processes. For instance, if $A \equiv a\Id$ for some $a \geq 0$ and $\nu$ is a non-increasing radial function (i.e., $\pro X$ is an isotropic unimodal L\'evy process), then \eqref{eq:comp_dens} and \eqref{eq:suf1} automatically hold \cite{bib:BGR}. For more general processes with jump intensities satisfying (A1) which are not non-increasing radial functions, they can be verified under a mild regularity assumption on the profile $g$ around zero (see Proposition \ref{prop:bound_by_psi} below).

\begin{theorem}
\label{thm:defic3}
Let $\pro X$ be a L\'evy process with L\'evy-Khinchin exponent $\psi$ given by \eqref{eq:Lchexp}--\eqref{eq:nuinf} such
that Assumptions (A1)--(A3) are satisfied. Moreover, suppose that conditions \eqref{eq:suf1},
\eqref{eq:fsm}--\eqref{eq:comp_dens} and \eqref{eq:intr_killing}--\eqref{eq:unif_bdd} hold. Then for every $\eta>0$ and
$r>0$ there exist $C_{19}=C_{19}(X,\eta,r)$ and $R=R(X,\eta,r) > r$ such that
$$
\ex^x \left[e^{-\eta \tau_{\overline B(0,r)^c} }\right] \leq C_{19} \, \nu(x), \quad |x| \geq R.
$$
\end{theorem}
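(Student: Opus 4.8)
The plan is to reduce the assertion to Lemma \ref{lem:defic1}, applied not to $\pro X$ itself but to an auxiliary L\'evy process, and then to carry the resulting estimate back to $\pro X$ via Proposition \ref{prop:domination}. Since $x \mapsto \ex^x[e^{-\eta \tau_{\overline B(0,r)^c}}]$ is non-increasing in $\eta$, it suffices to treat all sufficiently small $\eta>0$, the constant $C_{19}$ and the threshold $R$ being allowed to depend on $\eta$ and $r$. The function in \eqref{eq:modelhf} is bounded by $1$, non-negative and regular $(X,\eta)$-harmonic in $\overline B(0,r)^c$, so Lemma \ref{lem:defic1} would apply at once once the balancing condition \eqref{eq:cond1} is available; the difficulty is that, in contrast with the slowly decaying regime of Theorem \ref{thm:defic2}, \eqref{eq:cond1} may fail for $\pro X$ when $\nu$ is light, because $K_2^X(s_1/4,s_1/2,s_1)$ — which enters $C_{13}(X,s_1)$, hence both $h_1$ and $h_2$ — typically diverges faster than any power of $s_1$. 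I would circumvent this by running Lemma \ref{lem:defic1} for a process whose L\'evy density agrees with $\nu$ far from the origin but has been ``flattened'' on an annulus at scale $s_1$.

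Fix small $\eta>0$. For a large $s\ge1$, to be fixed at the end, I would take the symmetric L\'evy process $X^{(s)}:=(X^{(s)}_t)_{t\ge0}$ with the same diffusion matrix $A$ and L\'evy density $\nu^{(s)}(z):=\nu(z)\vee\big(C_5\, g(s/4)\big)$ for $s/4\le|z|\le 5s/4$ and $\nu^{(s)}:=\nu$ otherwise; then $\nu^{(s)}\equiv C_5\,g(s/4)$ on the annulus, $\nu^{(s)}\ge\nu$ everywhere, $\nu^{(s)}=\nu$ on $B(0,5s/4)^c$, and $\sigma(dx):=(\nu^{(s)}(x)-\nu(x))\,dx\ge0$ is finite with $|\sigma|\le c\,g(s/4)\,s^d$. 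Since $g(t)t^d\to0$, we may already require $|\sigma|<\eta/2$. One checks that $X^{(s)}$ satisfies (A1)--(A3) (the conditions (A1.2)--(A1.3) hold for $\nu^{(s)}$ with constants that may depend on $s$, which is harmless once $s$ is fixed), that $\psi^{(s)}\ge\psi$ — so $p^{(s)}(t,0)\le p(t,0)$ — and, using the finite-variance assumption \eqref{eq:fsm}, that $\Psi^{(s)}(\rho)\asymp\Psi(\rho)\asymp\rho^2$ for $\rho$ small compared to $1/s$, with constants uniform in large $s$. By Proposition \ref{prop:domination}(1), \eqref{eq:suf1} and \eqref{eq:fsm} the process $X^{(s)}$ also satisfies \eqref{eq:suf0}--\eqref{eq:suf2} with $\Phi=\Psi$ and $\Theta$ of order $g(s/4)$; Lemma \ref{lem:ver_cond4} then bounds $K_3^{X^{(s)}}(s)$ by a constant times $1/(\Psi(1/s)s^d)\asymp s^{2-d}$ (here \eqref{eq:fsm} is used to absorb the $\Theta$-term), while \eqref{def:C9} and \eqref{eq:gen_est_tau} give $C_3(X^{(s)},\cdot)$ of order $\Psi^{(s)}(\cdot)$ and $\ex^0[\tau^{X^{(s)}}_{B(0,2s)}]$ of order $s^2$.

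The gain of the flattening is that $K_2^{X^{(s)}}(s/4,s/2,s)=1$, because $\nu^{(s)}$ is constant on the annulus $\{s/4\le|\cdot|\le5s/4\}$ that this quantity reads off; hence $C_{13}(X^{(s)},s)$ is of order $s^{2-d}$. I would then take $r_1=s$ and $r_2=\kappa_1 s$ with $\kappa_1\ge3$ chosen from \eqref{eq:intr_killing} (the product there is non-increasing in $\kappa_1$, so it may be enlarged), and $r_3=r_3(s)>r_2$ chosen via \eqref{eq:unif_bdd} so large that $K_2^X(\kappa_1 s,r_3,\infty)\le\kappa_2+1$. Because the relevant jumps then avoid the modified annulus, $K_1^{X^{(s)}}(r_2)=K_1^X(r_2)$, $K_2^{X^{(s)}}(s,r_2,\infty)=K_2^X(s,\kappa_1 s,\infty)$ and $K_2^{X^{(s)}}(r_2,r_3,\infty)=K_2^X(\kappa_1 s,r_3,\infty)$. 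Feeding the orders of magnitude above into the definitions of $h_1$ and $h_2$ one sees that the bracketed factor in $h_1(X^{(s)},s,\kappa_1 s)$ stays bounded as $s\to\infty$, so the first summand in \eqref{eq:cond1} for $X^{(s)}$ is at most a constant times $K_2^X(s,\kappa_1 s,\infty)\,K_1^X(\kappa_1 s)$, which tends to $0$ by \eqref{eq:intr_killing}, whereas the second summand is at most a constant times $(s^{-2}+g(s/16))\,s^d\to0$. Thus for $s$ large enough \eqref{eq:cond1} holds for $X^{(s)}$ with killing rate $\eta/2$, hence also with $\eta-|\sigma|>\eta/2$, and Lemma \ref{lem:defic1} applied to $X^{(s)}$, to killing rate $\eta-|\sigma|$, and to the analogue of \eqref{eq:modelhf} for the hitting time of $\overline B(0,r)$ yields $\ex^x[e^{-(\eta-|\sigma|)\tau_{\overline B(0,r)^c}}]\le c\,\nu^{(s)}(x)=c\,\nu(x)$ for the process $X^{(s)}$ and $|x|$ beyond a threshold exceeding $5s/4$.

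It remains to transfer this to $\pro X$. Since $\pro X$ satisfies \eqref{eq:comp_dens}, Proposition \ref{prop:domination}(3), with first process $\pro X$ and second process $X^{(s)}$, combined with the bound just obtained gives $\ex^x[e^{-\eta\tau_{\overline B(0,r)^c}}]\le c\,\nu(x-x_{2r})$ for all large $|x|$, and a final appeal to (A1.1)--(A1.2) absorbs the bounded shift $x_{2r}$, producing $\ex^x[e^{-\eta\tau_{\overline B(0,r)^c}}]\le C_{19}\,\nu(x)$ for $|x|\ge R$, as claimed. I expect the principal obstacle to be the joint bookkeeping of the auxiliary quantities $C_3$, $C_{13}$, $\ex^0[\tau^{X^{(s)}}_{B(0,2s)}]$ and $K_3^{X^{(s)}}$ for the modified process — in particular, verifying that the bracket in $h_1(X^{(s)},s,\kappa_1 s)$ stays bounded, so that the bare condition \eqref{eq:intr_killing}, rather than a strengthening with polynomial corrections, is enough. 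This is precisely where the proportional scaling of all radii with $s$, the choice to flatten only on $\{s/4\le|\cdot|\le5s/4\}$ (so that $C_{13}(X^{(s)},s)$, which reads $\nu^{(s)}$ only there, is controlled while $|\sigma|$ and $K_2^{X^{(s)}}(r_2,r_3,\infty)$ stay small), and the finite-variance hypothesis \eqref{eq:fsm} enter in an essential way.
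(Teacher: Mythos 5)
Your proposal is correct and follows essentially the same strategy as the paper's proof: you flatten $\nu$ on an annulus at scale $s$ to trivialize the troublesome factor $K_2^{X}(s/4,s/2,s)$, apply Lemma \ref{lem:defic1} to the modified process once the balancing condition \eqref{eq:cond1} is secured via \eqref{eq:intr_killing}--\eqref{eq:unif_bdd} and \eqref{eq:fsm}, and transfer the resulting bound back to $\pro X$ through Proposition \ref{prop:domination}(3) and \eqref{eq:comp_dens}. The only deviations are cosmetic (you flatten on $\{s/4\le|\cdot|\le5s/4\}$ so that $K_2^{X^{(s)}}(s/4,s/2,s)=1$ exactly, rather than on $\{s/4\le|\cdot|\le s\}$ with the bound $\le C_5^2$ as in the paper, which forces you to take $\kappa_1\ge3$ instead of $\kappa_1\ge2$), and these are harmless.
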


\begin{proof}
We define a family of L\'evy measures $(\nu^{s})_{s \geq 4}$ by $\nu^{s}(dx)=\nu^{s}(x)dx$, with
\begin{align} \label{def:defnu}
\nu^{s}(x) := \left\{
  \begin{array}{ll}
	  \sup_{s/4 \leq |y| \leq s} \nu(y) & \mbox{  for    } s/4 \leq |x| \leq s, \vspace{0.2cm} \\
    \nu(x)  & \mbox{  otherwise. }
  \end{array}\right.
\end{align}
Let $\pro{X^{s}}$ be a L\'evy process determined by the L\'evy-Khinchin exponent $\psi^{s}$ (defined as in
\eqref{eq:Lchexp}, with L\'evy triplet $(0,A,\nu^{s}))$. We also define the corresponding symmetrization
$\Psi^{s}$ and profile function $H^{s}$ as in \eqref{eq:Lchexpprof} and \eqref{eq:PruitH}, respectively.
Note that $\nu^{s} = \nu + \sigma$, where $\sigma$ is a finite non-negative measure with density
$\sigma(x) = (\sup_{s/4 \leq |y| \leq s} \nu(y) - \nu(x))\1_{\{s/4 \leq |x| \leq s\}}$. Moreover, 
$$\int_{|x|/2 < |y| < |x|} |y|^2\nu(y) dy \geq (4 C_5)^{-1} g(|x|)|x|^2 \, |\left\{y: |x|/2<|y|<|x|\right\}|,$$ 
for $|x| \geq 1$. Thus, by \eqref{eq:fsm}, we have $\nu(x) \leq \nu^{s}(x) \wedge c_1 |x|^{-d-2}$, for all $|x| \geq 1$ and $s \geq 4$.
This and \eqref{eq:PruitH} immediately give
\begin{align} \label{eq:est0}
c_2 r^2 \leq \Psi(r) \leq \Psi^{s}(r) \leq c_3 H^{s} \left(\frac{1}{r}\right)
\leq c_4 (r^2 + s^{-2}), \quad r \in (0,1], \ \ \ s \geq 4,
\end{align}
with constants $c_2, c_3, c_4$ independent of $s$ and $r$. For more clarity, we divide the remainder
of the proof into four steps.
\smallskip

\noindent
\textit{Step 1.} In this step we estimate the function $K^{X^s}_3(r)$, the constants $C_3(X^s,r)$ given by
\eqref{def:C9a}, and the mean exit time from a ball for the process $\pro{X^{s}}$ with L\'evy measure $\nu^{s}$,
$s \geq 4$. As a consequence we also obtain general upper bounds for the corresponding functions $h_1, h_2$.

First notice that by \eqref{def:C9}, \eqref{eq:gen_est_tau} and \eqref{eq:est0}, we have
\begin{align} \label{eq:est1}
\ex^0[\tau^{X^{s}}_{B(0,r)}] \leq c_5 r^2 \quad \text{and}
\quad C_3(X^{s}, r) \leq c_6 (r^{-2} + s^{-2}),
\end{align}
for every $r \geq 1$ and $s \geq 4$, with constants $c_5, c_6$ independent of $s$ and $r$. Moreover,
we can also prove that there exists a constant $c_7$ (also uniform in $s$ and $r$) such that
\begin{align} \label{eq:est2}
K^{X^s}_3(r) \leq c_7 r^{4-d} (r^{-2} + r^d s^{-d-2}), \quad r \geq 8, \ \ \ s \geq 4.
\end{align}
Indeed, by the second inequality in Proposition \ref{prop:domination}(1), the transition densities
$p^{s}$ of $\pro{X^{s}}$ satisfy $p^s(t,x) \leq p(t,x) + t \sup_{z \in \R^d} \sigma(z)$, $x \in \R^d$,
$t>0$. Under assumption \eqref{eq:suf1}, this implies that $\sup_{|x| \geq r} p^s(t,x) \leq c_8 t
(\Psi(1/r)r^{-d} + s^{-d-2})$, for every $t > 0$, $r \geq 1$ and $s \geq 4$, with the constant $c_8$
independent of $s, r, t$. Moreover, by \cite[Lem. 5(a)]{bib:KS14}, (A2) and \eqref{eq:est0}, we obtain for $t>t_b$
\begin{align*}
\int_{\R^d} e^{-t \psi^s(\xi)} d\xi & \leq e^{-(t-t_b) \inf_{|z| \geq 1} \psi(z)} \int_{|\xi| \geq 1}
e^{- t_b \psi(\xi)} d\xi + \int_{|\xi| < 1} e^{-c_9 t |\xi|^2} d\xi \leq \frac{c_{10}}{t^{d/2}} \leq
c_{11} \left(\Psi^{-1}\left(\frac{1}{t}\right)\right)^d.
\end{align*}
Therefore, \eqref{eq:est2} follows from Lemma \ref{lem:ver_cond4} applied to $\pro {X^s}$, with $\Phi=
\Psi$ and $\Theta=s^{-d-2}$.

Next we take $\pro{X^{s}}$ with L\'evy measure $\nu^{s}$, $s \geq 4$, and consider the functions $h_1, h_2$. Using the bounds
\eqref{eq:est1}-\eqref{eq:est2}, for every $r_1 \geq 8$, $r_2 \geq 2r_1$ and $s \geq 4$ we
obtain
\begin{align} \label{eq:est4}
h_1(X^{s},r_1, r_2) \leq K^{X^s}_2(r_1, r_2, \infty)
\Bigg[c_{12} \left(1+\left(\frac{r_1}{s}\right)^2 \right)
\Bigg(1+\left(\frac{r_1}{s}\right)^{d+2}   +
\left(K^{X^{s}}_2\left(\frac{r_1}{4},\frac{r_1}{2},r_1\right)\right)^2 \Bigg) + 1 \Bigg]
\end{align}
and
\begin{eqnarray} \label{eq:est5}
r_1^d \, h_2(X^{s}, r_1)
& \leq &
\frac{c_{13}}{r_1^2} \left(1+\left(\frac{r_1}{s}\right)^2 \right) \Bigg[\left(1+\left(\frac{r_1}{s}\right)^2 \right)
\Bigg(1+\left(\frac{r_1}{s}\right)^{d+2}   +
\left(K^{X^{s}}_2\left(\frac{r_1}{4},\frac{r_1}{2},r_1\right)\right)^2 \Bigg) \nonumber \\
&& \qquad +
\left(1 \vee \left(\frac{r_1}{s} \right) \right)^{d+2} \Bigg] + c_{14} r_1^d \left(\frac{1}{r_1^{d+2}} \vee
\frac{1}{s^{d+2}} \right),
\end{eqnarray}
where the constants $c_{12}, c_{13} ,c_{14}$ do not depend on $s$, $r_1$ and $r_2$. The function $K^{X^{s}}_2$
appearing in the above bounds still does depend on the specific form of $\nu^{s}$, i.e., on $s$.
\smallskip

\noindent
\textit{Step 2.}
Now we choose a specific $s$ to obtain a suitable form of the function $K^{X^{s}}_2$ using the general upper bounds for the functions $h_1$, $h_2$ corresponding to $\pro{X^{s}}$ with L\'evy measure $\nu^{s}$ in the previous step. Note that while in the previous step the constants $c_{12}, c_{13}, c_{14}$ are independent of $s$, and $r_1, r_2$ are such that $s \geq 4$, $r_1 \geq 8$ and $r_2 \geq 2r_1$, from now on we assume that $s = r_1$ in \eqref{def:defnu}.

By the definition of $\nu^{r_1}$ we see that $\nu^{r_1}(x) = \nu(x)$ for $|x|  > r_1$ and $\nu^{r_1}(x) =
\sup_{r_1/4 \leq |z| \leq r_1}\nu(z)$ for $r_1/4 \leq |x| \leq r_1$. With this, we obtain for $r_1 \geq 8$ and $r_2 \geq 2r_1$ that
$$
K^{X^{r_1}}_1(r_2) = K^{X}_1(r_2), \quad K^{X^{r_1}}_2(r_1, r_2, \infty) = K^X_2(r_1, r_2, \infty) \quad \mbox{and} \quad K^{X^{r_1}}_2
\left(\frac{r_1}{4},\frac{r_1}{2},r_1\right) \leq C_5^2.
$$
The latter bound is a consequence of the fact that for $|y| \leq r_1/4$ and $r_1/2 \leq |x| < r_1$ we have
$$
\nu^{r_1}(x-y) = \nu^{r_1}(x), \quad \frac{r_1}{4} \leq |x-y| \leq r_1
$$
and
$$
\nu^{r_1}(x-y) = \nu(x-y) \leq C_5 g(|x-y|) \leq C_5 g(|x|) \leq C_5^2 \nu(x) \leq C_5^2 \nu^{r_1}(x), \quad r_1 < |x-y| \leq \frac{5r_1}{4}.
$$
In particular, by \eqref{eq:est4}-\eqref{eq:est5} and the discussion of the function $K^{X^{r_1}}_2$ above, we arrive at
$$
h_1(X^{r_1}, r_1, r_2) \leq c_{15} K^X_2(r_1,r_2, \infty), \quad r_1 \geq 8, \ \ \ r_2 \geq 2r_1,
$$
and
$$
r_1^d h_2(X^{r_1}, r_1) \leq \frac{c_{16}}{r_1^2}, \quad r_1 \geq 8.
$$
Note also that for any $r_2 \geq 2r_1$ and $r_3  > r_2 + r_1$ we have $K^{X^{r_1}}_2(r_2, r_3, \infty) =
K^X_2(r_2, r_3, \infty)$.

\smallskip

\noindent
\textit{Step 3.} Let now $\kappa_1 \geq 2$ and $\kappa_2 < \infty$ be the parameters given by
\eqref{eq:intr_killing}-\eqref{eq:unif_bdd}. From the construction made in the previous two steps we obtain that for every
$\eta    > 0$ there is $r_0 =r_0(\eta)$ such that for every $r_1 \geq r_0$ there exists a L\'evy process $\pro{X^{r_1}}$ with
L\'evy measure $\nu^{r_1}$  given by \eqref{def:defnu}, $r_2 = \kappa_1 r_1 \geq 2r_1$ and $r_3 > r_2+r_1$ for which
$$
2d \, C_5^4 \,h_1(X^{r_1},r_1, r_2) \, K^{X^{r_1}}_1(r_2) + h_2(X^{r_1}, r_1)
|B(0, r_2)| K^{X^{r_1}}_2(r_2, r_3, \infty) < \eta.
$$
We thus proved that for given $\eta   >0$ there exists $r_0=r_0(\eta)$ such that for every L\'evy process
$\pro{X^{r_1}}$ with $r_1 \geq r_0$ the assumption \eqref{eq:cond1} of Lemma \ref{lem:defic1} is satisfied
with $r_1$, $r_2$ and $r_3$. Hence for every $\eta>0$ there exists
$r_0=r_0(\eta)$ such that for all $r_1 \geq r_0$ and $R_1 > 0$ there exists a constant
$c_{17}$ for which
\begin{align} \label{eq:concl3}
\ex^x \left[e^{-\eta \tau^{X^{r_1}}_{\overline B(0,R_1)^c}} \right] \leq c_{17} \nu^{r_1}(x), \quad |x| \geq R_2.
\end{align}
where $\tau^{X^{r_1}}_{\overline B(0,R_1)^c} = \inf\left\{t>0: X^{r_1}_t \in \overline{ B}(0,R_1) \right\}$
and $R_2 = (r_1+R_1) \vee r_3$.

\smallskip 

\noindent
\textit{Step 4.} We now complete the proof of the claimed bound for the initial L\'evy process with L\'evy measure
$\nu$. Let $\eta >0$ and $r_0=r_0(\eta/2)$ be such that for every $r_1 \geq r_0$ the estimate \eqref{eq:concl3}
holds for the process $\pro{X^{r_1}}$ with L\'evy measure $\nu^{r_1}$ and $\eta/2$ instead of $\eta$. Choose $r_1=
r_1(\eta) \geq r_0$ such that $|B(0,r_1)| \sup_{r_1/4 \leq |z| \leq r_1}\nu(z) \leq c_{18} r_1^{-2} \leq \eta/2$
and recall that we have $\nu(x) \leq \nu^{r_1}(x)$, $x \in \R^d \backslash \left\{0\right\}$. 
This means that the assumptions of Proposition \ref{prop:domination}
are satisfied with $\nu^{(1)} = \nu$, $\nu^{(2)} = \nu^{r_1}$ and the corresponding $\sigma$, and
$|\sigma| \leq \eta/2$. Thus by \eqref{eq:comp_Lapl} and \eqref{eq:concl3} we finally obtain
$$
\ex^x \left[e^{-\eta \tau^{X^{r_1}}_{\overline B(0,R_1)^c}} \right] \leq c_{19} \ex^{x-x_{2R_1}}
\left[e^{-(\eta/2)\tau^{X^{r_1}}_{\overline B(0,R_1)^c}} \right] \leq c_{20} \nu^{r_1}(x-x_{2R_1}),
\quad |x| \geq (R_2 \vee R) + 3R_1,
$$
where $R$ comes from \eqref{eq:comp_dens}. The conclusion follows now from the fact that
$\nu^{r_1}(x) = \nu(x)$ for $|x| > r_1$ and (A1.1)-(A1.2).
\end{proof}

Recall that if $\pro X$ is an isotropic unimodal L\'evy process, then both \eqref{eq:comp_dens} and
\eqref{eq:suf1} hold. We conclude this section by a general sufficient condition on the profile $g$
around zero, which allows to
extend this property to jump intensities as in (A1).

\begin{proposition} \label{prop:bound_by_psi}
Let $\pro X$ be a L\'evy process determined by the L\'evy-Khinchin exponent $\psi$ as in \eqref{eq:Lchexp}--\eqref{eq:nuinf}
with L\'evy triplet $(0,A,\nu)$, such that $A = a \Id$ for some $a \geq 0$ and $\nu$ obeys (A1). Moreover, suppose that
\eqref{eq:fsm} is satisfied and there exist $\gamma_1, \gamma_2 \in (0,2)$ and $C_{20}, C_{21}$ such that
\begin{align} \label{eq:weakscaling}
C_{20} \lambda^{-d-\gamma_1} g(r) \leq g(\lambda r) \leq C_{21} \lambda^{-d-\gamma_2} g(r), \quad \lambda \in (0,1], \ \ \
r \in(0, 1].
\end{align}
Then conditions \eqref{eq:comp_dens} and \eqref{eq:suf1} are satisfied. In particular, $\pro X$ is a symmetric jump-paring
L\'evy process, i.e., both of the remaining Assumptions (A2)--(A3) in Definition \ref{jumpparing} hold.
\end{proposition}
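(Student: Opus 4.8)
The plan is to reduce the Proposition to the two kernel estimates \eqref{eq:comp_dens} and \eqref{eq:suf1}, and then to obtain these from the sharp short-time heat-kernel bounds available for L\'evy processes whose jump intensities are of type (A1) and satisfy a weak scaling condition. As a first step I would record that \eqref{eq:fsm} and \eqref{eq:weakscaling} together force a global two-sided weak scaling of $\Psi$: by \eqref{eq:fsm} one has $H(r)\asymp r^{-2}$ for $r\geq 1$ directly from the formula for $H$ in \eqref{eq:PruitH}, since $\int_{|y|<1}|y|^{2}\nu(dy)\leq r^{2}H(r)\leq \|A\|+\int|y|^{2}\nu(dy)$, while for $r\in(0,1]$ a change of variables in $H(r)=\|A\|r^{-2}+\int(1\wedge|y|^{2}r^{-2})\nu(dy)$ together with the two inequalities in \eqref{eq:weakscaling} and with $\gamma_{1},\gamma_{2}\in(0,2)$ (so that the resulting radial integrals converge) yields $c_{1}r^{-\gamma_{1}}\leq H(r)\leq c_{2}r^{-\gamma_{2}}+c_{3}$, hence the corresponding power control for $\Psi$ via \eqref{eq:PruitH}. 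The lower bound in \eqref{eq:weakscaling}, used through $\psi(\xi)\geq c\int_{|z|\leq 1/|\xi|}|\xi\cdot z|^{2}\nu(dz)$ and the radial symmetry of $g$, moreover gives $\psi(\xi)\geq c|\xi|^{\gamma_{1}}$ for $|\xi|\geq 1$, so $e^{-t\psi}\in L^{1}(\R^{d})$ for every $t>0$; thus $p(t,0)<\infty$ for all $t>0$ and (A2) holds. Granting \eqref{eq:suf1}, it delivers $\sup_{|x|\geq r}p(t,x)\leq C(r)t$ for every $r\geq 1$, and an analogous bound for $r\in(0,1)$ follows the same way (indeed more easily, since for $|x|\leq 1$ one is inside the scaling regime); combined with (A2) and a standard estimate as in \eqref{eq:est3}, this gives (A3). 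Hence the whole statement reduces to \eqref{eq:comp_dens} and \eqref{eq:suf1}.

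For the kernel estimates themselves, the idea is to exploit that $\nu(x)\asymp C_{5}g(|x|)$ with $g$ non-increasing, so that $X$ is comparable --- both at the level of the jump kernel and of the characteristic exponent --- to the isotropic unimodal L\'evy process $Y$ with triplet $(0,A,C_{5}g(|\cdot|))$ (a genuine infinite L\'evy measure, the $|x|^{2}$-integrability near the origin being exactly where $\gamma_{2}<2$ enters). For $Y$, which is unimodal and enjoys the global weak scaling of $\Psi^{Y}\asymp\Psi$ just established, the bounds \eqref{eq:comp_dens} and \eqref{eq:suf1} are classical --- this is the prototypical situation recalled in the text before the Proposition, cf. \cite{bib:BGR} --- and, more generally, the sharp short-time estimates for convolution semigroups with jump intensities satisfying (A1) under weak scaling, in the spirit of \cite{bib:KS14}, apply to $X$ itself and yield both \eqref{eq:comp_dens} and \eqref{eq:suf1}. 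With these in hand, (A1)--(A3) all hold, and the final claim --- that $\pro X$ is a symmetric jump-paring L\'evy process --- follows at once.

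The step I expect to be the real work is transferring the off-diagonal upper bounds from $Y$ to $X$ so that the estimate stays governed by $\nu(x)$ rather than by $\nu(cx)$ with $c\in(0,1)$, even for very light $\nu$. Note that the comparison scheme of Proposition \ref{prop:domination} does not apply directly, because $\nu$ and $C_{5}g(|\cdot|)$ disagree near the origin, where both are infinite, so their difference is not a finite measure. Instead I would split $X=X^{\flat}+X^{\sharp}$ into the part carrying jumps of size $\leq 1$ together with the Gaussian component, and the compound-Poisson part $X^{\sharp}$ with intensity $\nu_{1}=\nu\1_{B(0,1)^{c}}$ of total mass $\Lambda$. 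Comparability of $X^{\flat}$ with a truncated unimodal stable-like process (again via \eqref{eq:weakscaling}) gives the sharp near-diagonal bound $\|p^{X^{\flat}}(t,\cdot)\|_{\infty}\leq c(\Psi^{-1}(1/t))^{d}$ and a super-exponential off-diagonal tail; for $X^{\sharp}$, the jump-paring property (A1.3), through the iterated bound $\nu_{1}^{n*}\leq C^{n-1}\nu_{1}$ recorded after \eqref{eq:b2}, gives $p^{X^{\sharp}}(t,\cdot)\leq e^{-\Lambda t}\delta_{0}+q_{t}$ with $q_{t}$ a density controlled by $t\,\nu_{1}$ (for $|z|\geq 1$ and bounded $t$) and satisfying $\|q_{t}\|_{\infty}\leq \|\nu_{1}\|_{\infty}(1-e^{-\Lambda t})\leq c\,t$. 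Assembling $p^{X}=p^{X^{\flat}}\ast p^{X^{\sharp}}$ so that the resulting bound stays at $\nu(x)$ is the crux: the key observation is that (A1.3) forces $g$ to decay at most exponentially (else a pair of long jumps would be cheaper than a single one), so the super-exponential tail of $p^{X^{\flat}}$ is absorbed into a constant multiple of $g$ outside the unit ball; this, the near-diagonal doubling from (A1.2)/\eqref{eq:weakscaling}, and one more use of (A1.3) close the convolution estimate and give \eqref{eq:suf1} for bounded $t$ (the case of large $t$ being immediate from $p(t,x)\leq p(t,0)=\int e^{-t\psi}\,d\xi$ and $(\Psi^{-1}(1/t))^{d}/t\to 0$), after which \eqref{eq:comp_dens} for $X$ follows from that of $X^{\flat}$, the $t$-independence of the atom, and (A1.1)--(A1.2).
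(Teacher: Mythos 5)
Your overall architecture is reasonable and partly tracks the paper: the deduction of weak scaling for $H$ and $\Psi$ from \eqref{eq:fsm} and \eqref{eq:weakscaling}, the observation that (A2) follows from a polynomial lower bound on $\psi$, and the reduction of (A3) to \eqref{eq:suf1} via \eqref{eq:est3} are all fine and in the spirit of what the paper does. Your treatment of the compound-Poisson part $X^{\sharp}$ through the iterated bound $\nu_{1}^{n*}\leq C^{n-1}\nu_{1}$ is also one of the essential ingredients. The paper, however, does not re-derive these kernel bounds by a bare-hands decomposition: it verifies the hypotheses of \cite[Th.~1]{bib:KS14} (using \cite[Lem.~5(b)]{bib:KS14} and (A1.3)) and invokes the sharp two-sided small-time estimate \eqref{eq:basic_estimate}, together with the companion estimate $p_{\nu}(t,x)\leq c(h(t)^{-d}\wedge t\,g(|x|))$ from \cite[Prop.~1]{bib:KS14}. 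This makes \eqref{eq:comp_dens} for $t\leq t_0$ an immediate consequence of the radial monotonicity of $h(t)^{-d}\wedge t\,g(|\cdot|)$ and of $p_a$, and \eqref{eq:suf1} for $t\leq t_0$ a one-line deduction from $g(r)r^d\leq c\Psi_\nu(1/r)$. Rederiving these from scratch, as you propose, is substantially more work and risks getting stuck exactly where you flag it: closing the convolution $p^{X^\flat}*q_t$ so that the resulting bound is pointwise comparable to the original heat kernel, not just to a dilated $g(c|x|)$, requires the two-sided sharpness that \cite{bib:KS14} already provides.

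There is one genuine gap. You dispose of the large-$t$ regime by ``$p(t,x)\leq p(t,0)$ and $(\Psi^{-1}(1/t))^d/t\to 0$,'' but this does not give \eqref{eq:suf1}. That condition must hold with a \emph{uniform} constant for \emph{all} $t>0$ and \emph{all} $r\geq 1$. Fix any $t_1>t_0$: then $p(t_1,0)$ is a fixed positive number, while the right-hand side of \eqref{eq:suf1}, namely $C_9\,t_1\Psi(1/r)/r^d\asymp t_1\,r^{-d-2}$, tends to $0$ as $r\to\infty$. So the on-diagonal bound is hopeless in the regime $r\gg\sqrt{t}$ and one genuinely needs off-diagonal decay for $t>t_0$. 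The paper gets this from \cite[Th.~1]{bib:KS13}, which furnishes an $x$-dependent estimate of the form
\begin{align*}
p_{\nu}(t,x)\leq c\left(\frac{t\,\Psi_{\nu}(1/|x|)}{|x|^{d}}+t^{-d/2}
e^{-c'\frac{|x|}{\sqrt{t}}\log\left(1+c'\frac{|x|}{\sqrt{t}}\right)}\right),\quad t>t_0,
\end{align*}
and then shows the exponential tail is dominated by $t/|x|^{d+2}$ in both regimes $|x|\lessgtr\sqrt{t}$. You would need an analogous large-time, off-diagonal argument; without it the case $t>t_0$, $r\gg\sqrt{t}$ is simply missing. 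A minor additional caveat: your claim that (A1.3) forces $g$ to decay at most exponentially is morally right and you use it correctly to absorb the super-exponential tail of $p^{X^\flat}$, but it should be stated with the polynomial correction (cf.~the sharp threshold $\delta>(d+1)/2$ in Proposition~\ref{prop:sharp}) and is not needed once one cites \cite{bib:KS14} directly.
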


\begin{proof}
Let $\psi_{\nu}(\xi) = \int_{\R^d \backslash \left\{0\right\}}(1-\cos(\xi \cdot z)) \nu(z) dz$, $\xi \in \R^d$. Denote the
corresponding symmetrization of the characteristic exponent and the transition densities by $\Psi_{\nu}$ and $p_{\nu}$,
respectively. Clearly, $\psi(\xi) = a|\xi|^2 + \psi_{\nu}(\xi)$. Also, we denote by $p_a(t,x,y)=p_a(t,y-x)=(4 \pi at)^{-d/2}
\exp(-|y-x|^2/(4at))$ the transition densities of the diffusion part of $\pro X$, whenever $a>0$. It follows from
\eqref{eq:PruitH} that $c_1 \lambda^{\gamma_1} \Psi_{\nu}(r) \leq \Psi_{\nu}(\lambda r) \leq c_2 \lambda^{\gamma_2}
\Psi_{\nu}(r)$, $\lambda, r \geq 1$. By this, \cite[Lem. 5 (b)]{bib:KS14} and (A1.3), we see that both conditions (1.1) (a)
and (b) in \cite[Th. 1]{bib:KS14} are satisfied. Thus there exist $c_3, \theta, t_0 >0$ such that for every $t \in (0,t_0]$
\begin{align} \label{eq:basic_estimate}
p_{\nu}(t,x) \asymp c_3 \left(h(t)^{-d} \1_{\left\{|x| \leq \theta h(t) \right\}} + t g(|x|) \1_{\left\{|x| \geq \theta h(t)
\right\}} \right), \quad \text{with} \quad h(t):= \frac{1}{\Psi^{-1}_{\nu}\left(\frac{1}{t}\right)}.
\end{align}
In fact, by \cite[Prop. 1]{bib:KS14} we may also assume that $p_{\nu}(t,x) \leq c_3 (h(t)^{-d} \wedge t g(|x|))$,
$t \in (0,t_0]$, $x \in \R^d$.

We are now in the position to prove \eqref{eq:comp_dens}. We need to consider only the case $a>0$; for $a=0$ the proof is
similar and simpler. Let first $t \in (0,t_0]$ and $|x| \geq |y|$. By \eqref{eq:basic_estimate}, we have
\begin{align*}
p(t,x) = p_{\nu}(t,\cdot) * p_a(t,\cdot)(x) \leq c_3 \int_{\R^d}(h(t)^{-d} \wedge t g(|x-z|))p_a(t,z)dz = c_3
\int_{\R^d}k(t,x-z) p_a(t,z)dz,
\end{align*}
where $k(t,x):= h(t)^{-d} \wedge t g(|x|)$. Observe that for every fixed $t \in (0,t_0]$ both $k(t,\cdot)$ and
$p_a(t, \cdot)$ are non-increasing radial functions, and the convolution of such functions preserves this property.
Thus $\int_{\R^d}k(t,x-z) p_a(t,z)dz \leq \int_{\R^d}k(t,y-z) p_a(t,z)dz$ and hence, $p(t,x) \leq c_3^2 p(t,y)$,
for $|x| \geq |y|$ and $t \in (0,t_0]$. Let now $t > t_0$ and $|x| \geq |y| \geq 1$, $|x-y| \leq h(t_0)/2$. By
\eqref{eq:basic_estimate} we have
\begin{align*}
p(t &,x)  = \int_{\R^d} \int_{\R^d} p_{\nu}(t_0,x-z-w) p_{\nu}(t-t_0,w) dw \, p_a(t,z)dz \\
       & \leq c_3 \int_{\R^d} \int_{\R^d}(h(t_0)^{-d} \wedge t_0 g(|x-z-w|)) p_{\nu}(t-t_0,w) \, p_a(t,z) \, dw dz \\
       & \leq c_3 \left(\int \int_{|y-z-w| \leq \theta h(t_0)} h(t_0)^{-d}
			 +\int \int_{|y-z-w| \geq \theta h(t_0)} t_0 g(|x-z-w|) \right) p_{\nu}(t-t_0,w) p_a(t,z) \, dw dz.
\end{align*}
Notice that by (A1.1)--(A1.2) there exists a constant $c_4=c_4(t_0) \geq 1$ for which $g(|x-z-w|) \leq c_4
g(|y-z-w|)$ on the set $|y-z-w| \geq \theta h(t_0)$ and thus the sum of the above two integrals is bounded
by $c_3^2 c_4 p(t,y)$. Condition \eqref{eq:comp_dens} follows immediately.

We now prove \eqref{eq:suf1} for the density $p_{\nu}$ (i.e., when $a=0$). We already proved that $p_{\nu}(t,x)
\leq c_3 t g(|x|)$, $x \in \R^d$, $t \in (0,t_0]$. Since $g(r) r^d \leq c_5 \Psi_{\nu}(1/r)$, $r>0$, condition
\eqref{eq:suf1} follows from this for $t \in (0,t_0]$. Therefore it suffices to consider only the case $t > t_0$. By
\cite[Lem. 5 (a)]{bib:KS14}, \eqref{eq:fsm}, \eqref{eq:weakscaling} and \eqref{eq:PruitH}, we have $\psi_{\nu}(\xi) \geq
c_6 \Psi_{\nu}(|\xi|) \geq c_7 (|\xi|^2 \wedge |\xi|^{\gamma_1})$, $\xi \in \R^d$. With this, for $t>t_0$ we obtain
\begin{align} \label{eq:aux_est_1}
\int_{\R^d} e^{-t \psi_{\nu}(\xi)} |\xi| d\xi & = \int_{|\xi| \geq 1} e^{-t \psi_{\nu}(\xi)} |\xi| d\xi + \int_{|\xi| < 1}
e^{-t \psi_{\nu}(\xi)} |\xi| d\xi \nonumber \\       & \leq e^{-c_6(t-t_0) \Psi_{\nu}(1)} \int_{|\xi| \geq 1} e^{-c_7 t_0 |\xi|^{\gamma_1}}
|\xi| d\xi + \int_{|\xi| < 1} e^{-c_7 t |\xi|^2} |\xi| d\xi \\
& \leq c_8 \left(e^{-c_6t \Psi_{\nu}(1)} + t^{-(d+1)/2} \int_{\R^d} e^{-c_7 |\xi|^2} |\xi| d\xi  \right) \leq c_9 t^{-(d+1)/2}. \nonumber
\end{align}
Since $\Psi_{\nu}^{-1}(1/t) \asymp 1/\sqrt{t}$, $t    > t_0$, we see that the assumption (3) of \cite[Th. 1]{bib:KS13} is
satisfied for $T = (t_0, \infty)$. Moreover, by the monotonicity and the doubling property of $\Psi_{\nu}$, we can also
directly check that the remaining assumptions (1)--(2) of this theorem hold for $f(r):= \Psi_{\nu}(1/r)r^{-d}$ and
$\gamma = d$. Hence we get
$$
p_{\nu}(t,x) \leq c_{10} \left(\frac{t \, \Psi_{\nu}(1/|x|)}{|x|^{d}} + t^{-d/2} e^{-c_{11}\frac{|x|}{\sqrt{t}}
\log \left(1+c_{11}\frac{|x|}{\sqrt{t}}\right)}\right), \quad x \in \R^d \backslash \left\{0\right\}, \ \ \ t > t_0.
$$
It suffices to estimate the exponential term only. If $|x| \geq \sqrt{t}$, then
$$t
^{-d/2} e^{-c_{11}\frac{|x|}{\sqrt{t}} \log \left(1+c_{11}\frac{|x|}{\sqrt{t}}\right)}
\leq c_{12} t^{-d/2} (\sqrt{t}/|x|)^{d+2} = c_{12} t/|x|^{d+2},
$$
for a constant $c_{12} >0$. On the other hand, for $|x| \leq \sqrt{t}$ we similarly have
$$
t^{-d/2} e^{-c_{11}\frac{|x|}{\sqrt{t}} \log \left(1+c_{11}\frac{|x|}{\sqrt{t}}\right)}
\leq t^{-d/2} \leq t^{-d/2}(\sqrt{t}/|x|)^{d+2} =  t/|x|^{d+2},
$$
and thus finally obtain
$$
p_{\nu}(t,x) \leq c_{13} t \left(\frac{\Psi_{\nu}(1/|x|)}{|x|^{d}} + \frac{(1/|x|^2)}{|x|^{d}}\right), \quad x \in \R^d
\backslash \left\{0\right\}, \ \ \ t > t_0.
$$
Since by \eqref{eq:PruitH} we have $\Psi_{\nu}(r) \geq (c_7/c_6) r^2$, $r \in (0, 1]$, this implies \eqref{eq:suf1} for
$p_{\nu}$ when $t > t_0$. We thus completed the proof of \eqref{eq:suf1} for $p_{\nu}$ (i.e., when $a = 0$). Suppose now
that $a>0$. We have for every $r > 0$, $|x| \geq r$ and $t>0$, that
\begin{align*}
p(t,x) = \left(\int_{|y-x|\geq r/2} + \int_{|y-x| < r/2} \right) p_{\nu}(t,x-y) p_a(t,y) dy
\leq c_{14} t \left(\frac{\Psi_{\nu}(1/r)}{r^{d}} + \frac{(1/r^2)}{r^{d}}\right) + t^{-d/2} e^{-\frac{r^2}{16at}}.
\end{align*}
Similarly as above, we can show that $t^{-d/2} e^{-r^2/(16at)} \leq c_{15} t /r^{d+2}$. Since $\Psi(r) \asymp \Psi_{\nu}(r)
\asymp r^2$, $ r \in (0,1]$, we conclude that \eqref{eq:suf1} holds for $p$ as well.

Assumption (A2) follows since the integral $\int_{|\xi| \geq 1} e^{-c_7 t_0 |\xi|^{\gamma_1}} |\xi| d\xi$ is convergent, while
(A3) holds by the bound $\sup_{|x| \geq r} p(t,x) < c_{16} t$, $r   >0$, with $c_{16}=c_{16}(r)$, and the general estimate as
in \eqref{eq:est3}.
\end{proof}
\noindent
Note that in fact assumption \eqref{eq:suf1} can be directly extended to the case when $\xi \cdot A \xi \geq C |\xi|^2$,
$\xi \in \R^d$, for some $C>0$ (i.e., under the uniform ellipticity condition). The same should be true for \eqref{eq:comp_dens}.
We also conjecture that the above proposition holds in a greater generality and the requirement \eqref{eq:weakscaling} can be
relaxed.

\smallskip
\section{Spatial decay of eigenfunctions}
\label{sec:eig_decay}

\subsection{Decaying potentials and basic properties of eigenfunctions}
\noindent
Now we turn to discussing the spatial decay properties of eigenfunctions of non-local Schr\"odinger operators presented in
the Introduction. Except for the last subsection, in this part we consider \emph{decaying potentials} in the following sense:

\medskip

\begin{itemize}
\item[\textbf{(A4)}]
Let $V \in \cK_{\pm}^X$ be such that $V(x) \to 0$ as $|x| \to \infty$.
\end{itemize}

\medskip

The number $\lambda \in \R$ is an eigenvalue of the non-local Schr\"odinger operator $H$ if there exists an
eigenfunction $\varphi \in L^2(\R^d)$ such that
\begin{align} \label{eq:eig}
H \varphi = \lambda \varphi, \quad \text{i.e.} \quad T_t \varphi = e^{-\lambda t} \varphi,
\quad \text{for every} \ \ t>0.
\end{align}
The problem of existence and other properties of negative eigenvalues for non-local Schr\"odinger operators
has been widely studied \cite{bib:W74, bib:W75, bib:CMS90, bib:FLS, bib:HL, bib:LS}.

For the reader's convenience we now recall a standard sufficient condition for the existence of negative bound
states (eigenfunctions for negative eigenvalues) for non-local Schr\"odinger operators. The following proposition
is a direct consequence of a basic result due to Weder \cite[Ths 3.6-3.7]{bib:W75} and standard facts on self-adjoint
operators \cite[Prop. 10.4 and 12.8]{bib:Schm}, therefore we omit its proof. Recall
that $L$ and $\cE$ are defined by \eqref{def:gen}-\eqref{def:qform}, and that $L^p + L^{\infty}_{\varepsilon}(\R^d)$,
$p \geq 1$, denotes the space of real-valued functions $V$ on $\R^d$ such that for every $\varepsilon >0$ there exist
$V_{1,\varepsilon} \in L^p(\R^d)$ and $V_{2,\varepsilon} \in L^{\infty}(\R^d)$ with $\left\|V_{2,\varepsilon}\right\|_
{\infty} < \varepsilon$ for which $V = V_{1,\varepsilon} + V_{2,\varepsilon}$.

\begin{proposition} \label{prop:weder}
Let $H_0=-L$ be the pseudo-differential operator with symbol $\psi$ given by \eqref{eq:Lchexp}, and $V = V_+ - V_{-}$ be a Borel
function on $\R^d$ satisfying the following properties:
\begin{itemize}
\item[(1)]
$V_{\pm}\in L^p + L^{\infty}_{\varepsilon}(\R^d)$, for some $p \in [1,\infty)$
\item[(2)]
$V_{-}$ is relatively bounded with respect to $H_0$ with relative bound strictly less than $1$ in quadratic form sense,
i.e., there exist constants $C_{22} \in (0,1)$ and $C_{23} \geq 0$ such that
\begin{align} \label{eq:rfbdd}
\|V_{-}^{1/2} f\|_2^2 \leq C_{22} \cE(f,f) + C_{23} \|f\|_2^2, \quad f \in D(\cE).
\end{align}
\end{itemize}
Then the non-local Schr\"odinger operator $H =H_0+V$ can be defined as a self-adjoint operator in form sense.
Moreover, we have $\Spec H = \Spec_{\rm ess} H \cup \Spec_{\rm d}H$, where $\Spec_{\rm ess} H = \Spec_{\rm ess} H_0 =
[0 ,\infty)$, $\Spec_{\rm d} H \subset (-\infty, 0)$, and $\Spec_{\rm d} H$ consists of isolated eigenvalues of finite
multiplicity whenever it is non-empty. In particular, if there exists $f \in D(\cE)$ such that
\begin{align} \label{eq:suf_ex}
\cE(f,f) + \int_{\R^d} V(x)f^2(x)dx <0,
\end{align}
then $\Spec_{\rm d} H \neq \emptyset$.
\end{proposition}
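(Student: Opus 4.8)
The plan is to assemble the statement from three standard ingredients: (i) the KLMN theorem to make sense of $H=H_0+V$ as a self-adjoint, lower-bounded form sum; (ii) Weyl's theorem on the stability of the essential spectrum under relatively form-compact perturbations, together with an explicit computation of $\Spec H_0$; and (iii) the min-max principle to detect a negative discrete eigenvalue when \eqref{eq:suf_ex} holds. The detailed technical work underpinning (i) and (ii) in the present non-local setting is precisely what is furnished by \cite[Ths 3.6--3.7]{bib:W75}, combined with the abstract facts in \cite[Prop. 10.4 and 12.8]{bib:Schm}, which is why we only sketch the argument here.

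First I would fix the form. Condition (2) says exactly that $V_-$ is $H_0$-form bounded with relative bound $C_{22}<1$, while $V_+\geq 0$ defines a (possibly unbounded) nonnegative form on $D(\cE)$; the KLMN theorem then produces a unique self-adjoint operator $H$ with form domain $D(\cE)$ whose quadratic form is $f\mapsto \cE(f,f)+\int_{\R^d} V f^2$, and the estimate $H\geq H_0-V_-\geq (1-C_{22})H_0-C_{23}\geq -C_{23}$ shows $H$ is bounded below. Next I would identify $\Spec_{\rm ess}H_0=\Spec H_0=[0,\infty)$: under the Fourier transform $H_0$ is multiplication by $\psi$, which is continuous with $\psi(0)=0$; by \eqref{eq:nuinf} (so $\nu$ is an infinite measure) one has $\Psi(r)\to\infty$, hence $\psi$ is unbounded above, and since $\R^d$ is connected the range of $\psi$ is the connected set $[0,\infty)$. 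As $[0,\infty)$ has no isolated points, the whole spectrum is essential. Then one uses condition (1): writing $V_\pm=V_{1,\varepsilon}+V_{2,\varepsilon}$ with $V_{1,\varepsilon}\in L^p$ and $\|V_{2,\varepsilon}\|_\infty<\varepsilon$, the $L^p$-pieces are $H_0$-form compact and the $L^\infty_\varepsilon$-pieces are form small, so $V$ is a relatively form-compact perturbation of $H_0$; consequently $(H+1)^{-1}-(H_0+1)^{-1}$ is compact and Weyl's theorem gives $\Spec_{\rm ess}H=\Spec_{\rm ess}H_0=[0,\infty)$.

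Combining $\Spec_{\rm ess}H=[0,\infty)$ with the lower bound on $H$, the spectrum of $H$ decomposes as $\Spec H=[0,\infty)\cup\Spec_{\rm d}H$ with $\Spec_{\rm d}H=\Spec H\cap(-\infty,0)$ consisting of isolated eigenvalues of finite multiplicity (this is the content of the cited abstract facts). Finally, if there exists $f\in D(\cE)$ with $\cE(f,f)+\int_{\R^d}V f^2<0$, the min-max principle yields $\inf\Spec H\leq\big(\cE(f,f)+\int_{\R^d}Vf^2\big)/\|f\|_2^2<0=\inf\Spec_{\rm ess}H$, so the bottom of $\Spec H$ is strictly below the essential spectrum and is therefore a discrete eigenvalue; hence $\Spec_{\rm d}H\neq\emptyset$.

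I expect the main obstacle to be step (ii), specifically the verification that an $L^p+L^\infty_\varepsilon$ potential is $H_0$-form compact for a general symmetric Lévy generator $H_0$: this requires quantitative control of $(H_0+1)^{-1/2}$ (or of the transition densities, using the strong Feller property granted by \eqref{eq:nuinf}) to run a compactness argument of Rellich--Kondrachov type, and it is precisely at this point that one leans on the results of \cite{bib:W75}. The remaining steps are routine applications of KLMN, Weyl's theorem and the variational principle.
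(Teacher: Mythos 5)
Your proposal is correct and follows essentially the same route the paper indicates: the paper omits the proof precisely because the statement is, as you recognize, a direct application of the KLMN theorem, Weyl's essential-spectrum stability under relatively form-compact perturbations (with the nontrivial verification of form-compactness for $L^p+L^\infty_\varepsilon$ potentials delegated to Weder, Ths.\ 3.6--3.7 of \cite{bib:W75}), the spectral-theoretic facts of \cite[Prop.\ 10.4 and 12.8]{bib:Schm}, and the min-max principle. The only small imprecision is your claim that the form domain is exactly $D(\cE)$: in general it is $D(\cE)\cap D(V_+^{1/2})$ unless $V_+$ is itself $H_0$-form bounded, but this does not affect the structure of the argument or the conclusion.
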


\begin{remark}
{\rm
\hspace{100cm}
\begin{itemize}
\item[(1)]
It is known that \eqref{eq:Katoclass} is equivalent to the property that
$\lim_{\eta \to \infty} \left\|(-L+\eta \Id)^{-1}|V|\right\|_{\infty} = 0$ (see e.g. \cite[Prop. 3.4]{bib:GrzSz}). Moreover,
a standard argument based on Stein's interpolation theorem (see e.g. \cite[Prop. 3.35]{bib:LHB}) gives that the latter
property implies \eqref{eq:rfbdd} with arbitrarily small $C_{22}$ (i.e., infinitesimal form boundedness of $V$ with
respect to $H_0=-L$). In particular, for any decaying potential both of assumptions (1) and (2) in Proposition
\ref{prop:weder} hold.
\item[(2)]
For decaying potentials the number of negative eigenvalues is bounded from above in many cases of interest, and bounds
are given by variants of the Cwikel-Lieb-Thirring inequality; for the class of non-local Schr\"odinger operators
with Bernstein functions of the Laplacian see \cite{bib:HL}. Whenever \eqref{eq:suf_ex} holds and the number of
negative eigenvalues is finite, a unique ground state exists.
\item[(3)]
Let $0 \leq V_0 \in \cK^X$, $V_0 \neq 0$, and $V_{a,b}(x):= - a V_0(bx)$, with $a, b    >0$. Notice that for any non-zero
$f \in D(\cE)$ there are appropriate choices of $a, b$ for which $\cE(f,f) + \int_{\R^d} V_{a,b}(x)f^2(x)dx <0$.
In particular, if $V_0$ is a non-increasing radial function and \eqref{eq:suf_ex} holds for $V_{a_0,b_0}$ with
some $a_0, b_0$, then this extends to all $a > a_0$ and $b \in (0,b_0)$. Moreover, let $f_r$ and $\mu_r$ be a
ground state and ground state eigenvalue, respectively, for an isotropic unimodal L\'evy process with diffusion matrix
$\left\|A\right\| \Id$ and radially non-increasing L\'evy density $g(|x|)$, killed on leaving the ball $B(0,r)$, $r>0$
(recall that $g$ is determined by (A1)). Denote the corresponding quadratic form  by $(\cE_0, D(\cE_0))$. Clearly,
$\supp f_r = B(0,r)$, $\left\|f_r\right\|_2=1$, and it is seen that $f_r$ is a non-increasing radial function
\cite[Def.1.3, Cor.2.3]{bib:AL}, see also \cite[Lem.3.1]{bib:Kal}. Since $\cE(f_r,f_r) \leq C_5 \cE_0(f_r,f_r) =
C_5 \mu_r$, we have
$$
\cE(f_r,f_r) + \int_{\R^d} V_{a,b}(x)f_r^2(x)dx
\leq
C_5 \mu_r - a \int_{B(0,r)} V_0(bx) f_r^2(x) dx
\leq
C_5 \mu_r - a V_0(br),
$$
for $r>0$. In many cases good approximations of $\mu_r$ are known (see, e.g., \cite{bib:CS0, bib:KKM}).
With this, for given $C_5$, $a$ and $b$, we can settle if the right hand side is
negative and even estimate its distance from zero. On the other hand, when the underlying L\'evy process is
recurrent and $V \not \equiv 0$ is a non-positive bounded potential with compact support, then a negative bound state
exists \cite[Th. V.1]{bib:CMS90}.
\end{itemize}
}
\end{remark}

\begin{example}
{\rm The following decaying potentials are some possible choices and applications.
\begin{itemize}
\item[(1)]
\emph{Potential wells:} Let $V(x)= - v(x)$ with a compactly supported, non-negative bounded Borel function $v \not \equiv 0$.
Specifically, we can choose $V(x)=-a \1_{B(0,1)}(bx)$, for $a, b>0$.

\vspace{0.1cm}
\item[(2)]
\emph{Coulomb-type potentials:} Let $g$ in Assumption (A1) be such that $g(r)=r^{-d-\alpha}$, $r \in (0,1]$, for some $\alpha
\in (0,2)$, and let $V(x)=-(a_1|x|^{-\beta_1} \wedge a_2|x|^{-\beta_2})$, with $\beta_1 \in (0,\alpha \wedge d]$, $\beta_2 \in
[\beta_1, \infty)$ and $a_1, a_2 >0$. Then $V \in \cK^X$ can be directly checked by using \eqref{eq:Kato_new} and
\cite[Th. 2]{bib:KS14}.

\vspace{0.1cm}
\item[(3)]
\emph{Yukawa-type potentials:}
Let $g$ in Assumption (A1) be as in (2) above and let $V(x)=-(a_1|x|^{-\beta_1} \wedge a_2|x|^{-\beta_2}e^{-b|x|})$,
with $\beta_1 \in (0,\alpha \wedge d]$, $\beta_2 \in [\beta_1, \infty)$
and $a_1, a_2, b >0$.

\vspace{0.1cm}
\item[(4)]
\emph{P\"oschl-Teller potential:} This is the case of $V(x)= -a/\cosh^2 (b|x|)$ with $a, b > 0$.

\vspace{0.1cm}
\item[(5)]
\emph{Morse potential:} This is the case of $V(x)= a((1-e^{-b(|x|-r_0)})^2-1)$ with $a, b, r_0 > 0$.
\end{itemize}
}
\end{example}

\medskip
\noindent
We will everywhere below assume that every eigenfunction $\varphi$ is normalized so that $\|\varphi\|_{2} = 1$.
Moreover, by the assumption $V \in \cK_{\pm}^X$ we have $T_t(L^2(\R^d)) \subset L^{\infty}(\R^d)$ and $T_t(L^{\infty}(\R^d))
\subset C_{\rm b}(\R^d)$ for every $t>t_{\rm b}$, compare Lemma \ref{lm:semprop}. Therefore $\varphi = e^{\lambda t} T_t\varphi
\in C_{\rm b}(\R^d)$, in particular, it makes sense to study pointwise estimates of $\varphi$.

When $\lambda_0:= \inf \Spec H$ is an isolated eigenvalue (i.e., a ground state exists), by standard arguments based on Lemma
\ref{lm:semprop} (4) and \cite[Th. XIII.43]{bib:RS} it follows that it is unique and the corresponding eigenfunction $\varphi_0$
has a strictly positive version, which will be our choice throughout. It is known that whenever a ground state at eigenvalue
$\lambda_0 \neq 0$ exists, $\varphi_0$ is the only non-negative eigenfunction of $H$ corresponding to a non-zero eigenvalue.

Below we will often use the following resolvent representation of eigenfunctions. Let $\lambda$
and $\varphi$ be such that
$T_t \varphi(x) = e^{-\lambda t} \varphi(x)$ for every $x \in \R^d$ and $t>0$. Choose $\theta
\in \R$ such that $\theta + \lambda >0$. By integrating on both sides of the equality
$$
e^{-(\theta+\lambda)t} \varphi(x) = \ex^x\left[e^{-\int_0^t(\theta+V(X_s))ds} \varphi(X_t)\right],
\quad t>0, \; x \in \R^d,
$$
we obtain
\begin{align}\label{eq:eig}
\varphi(x) = (\theta+\lambda)
\int_0^{\infty}\ex^x\left[ e^{-\int_0^t(\theta+V(X_s))ds} \varphi(X_t) \right]dt, \quad x \in \R^d.
\end{align}
Notice that by combining this with \eqref{eq:pot1} applied to $f=\varphi$ for an arbitrary open set
$D \subset \R^d$ and $x \in D$, we readily obtain
\begin{align} \label{eq:eig1}
\varphi(x)  = (\theta+\lambda) \ex^x\left[ \int_0^{\tau_D} e^{-\int_0^t(\theta+V(X_s))ds} \varphi(X_t)
dt\right] +
\ex^x\left[\tau_D < \infty; e^{-\int_0^{\tau_D}(\theta+V(X_s))ds} \varphi(X_{\tau_D}) \right]
\end{align}
which will be a key formula in what follows.

We close this subsection by showing that positive eigenfunctions (in particular, ground states) are bounded from
below by $\nu$, no matter what the absolute value of $\lambda$ is. This fact provides a general reference point
and justifies the question how far is the decay of eigenfunctions from the L\'evy intensity of the given process. We
also show that if (A1.2) or (A1.3) fails to hold, then such eigenfunctions cannot be bounded from above by $\nu$,
even when the absolute value of $\lambda$ is large. In the next subsections we will study upper bounds of the
eigenfunctions.

\begin{theorem}[\textbf{Lower bound and necessary condition for upper bound}]
\label{prop:lowerbound}
Let $\pro X$ be a symmetric L\'evy process with L\'evy-Khinchin exponent satisfying \eqref{eq:Lchexp}-\eqref{eq:nuinf},
and let assumption (A4) hold. Suppose that $\varphi \in L^2(\R^d)$ is a positive eigenfunction at eigenvalue $\lambda
\in \R$. Then the following hold.
\begin{itemize}
\item[(1)]
If (A1.1)-(A1.2) are satisfied, then for every $\delta  >0$ there exists $r=r(V,\delta) \geq 1$ such that
$$
\varphi(x) \geq K\, \nu(x), \quad |x|   > r + 1,
$$
where $K:= \frac{1-e^{-(|\lambda|+\delta)}}{C_5^2 C_6^{\left\lceil r\right\rceil + 1} (|\lambda|+\delta)} \, \pr^0(\tau_{B(0,1)} > 1) \int_{B(0,r)}
\varphi(z)dz$.

\medskip
\item[(2)]
Let (A1.1) be satisfied and consider the following two disjoint cases.
\begin{itemize}
\item[(i)]
(A1.2) holds and (A1.3) does not hold.
\item[(ii)]
(A1.2) does not hold (and hence also (A1.3) does not hold).
\end{itemize}
Then in either of cases (i) and (ii) we have
$$
\limsup_{|x| \to \infty} \frac{\varphi(x)}{\nu(x)}= \infty.
$$
\end{itemize}
\end{theorem}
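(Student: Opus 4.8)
The plan is to reduce both parts to Theorem~\ref{prop:lowerboundharm} by using the resolvent representation \eqref{eq:eig1} to recast the eigenvalue equation as harmonicity at infinity for the process killed at a rate slightly above $|\lambda|$, the slack being absorbed by the tail decay of $V$.

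\emph{Part (1).} Fix $\delta>0$. Since $V(x)\to0$, choose $r=r(V,\delta)\geq1$ so large that $\sup_{|y|>r}|V(y)|\leq\delta/2$, and put $\theta:=|\lambda|+\delta/2$, $\eta:=|\lambda|+\delta$. A one-line case check ($\lambda\geq0$ versus $\lambda<0$) gives $\theta+\lambda>0$, so \eqref{eq:eig1} applies, while for any path staying in $D:=\overline B(0,r)^c$ one has $\theta+V(X_s)\leq\eta$. Applying \eqref{eq:eig1} with this $D$, discarding the nonnegative first term, and using that $X_{\tau_D}\in\overline B(0,r)$ on $\{\tau_D<\infty\}$ yields, for $|x|>r$,
$$
\varphi(x)\ \geq\ \ex^x\left[\tau_D<\infty;\ e^{-\eta\tau_D}g(X_{\tau_D})\right]\ =:\ f(x),\qquad g:=\varphi\,\1_{\overline B(0,r)}.
$$
By the strong Markov property $f$ is non-negative and regular $(X,\eta)$-harmonic in $D$, hence $(X,\eta)$-harmonic at infinity, and $f=\varphi$ on $\overline B(0,r)$; since $\varphi>0$ we have $\int_{B(0,r)}f\,dz=\int_{B(0,r)}\varphi\,dz>0$. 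Now Theorem~\ref{prop:lowerboundharm}(1) applied to $f$ with this $\eta$ and $r$ gives $f(x)\geq K\nu(x)$ for $|x|>r+1$ with precisely the constant $K$ displayed in the statement, and $\varphi\geq f$ on $D$ closes part (1).

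\emph{Part (2).} I would rerun the proof of Theorem~\ref{prop:lowerboundharm}(2) with $\varphi$ in place of $f$. Harmonicity is used there only through the one-sided inequality $h(x)\geq\ex^x[e^{-\eta\tau_{B(x,\rho)}}h(X_{\tau_{B(x,\rho)}})]$ for large $|x|$ and small $\rho$, and for $\varphi$ this again follows from \eqref{eq:eig1} with $D=B(x,\rho)$, $\rho<|x|-r$ (same $\theta,\eta$), on dropping the nonnegative first term. The positivity hypotheses needed in that proof hold automatically here: $\varphi\in C_{\rm b}(\R^d)$ and $\varphi>0$, so $\inf_{|y|\leq(r\vee2)+1}\varphi(y)>0$; and in case (i) the bound $\varphi(y)\geq c_1g(|y|)$ for all $|y|\geq1$ follows from part (1) (which gives $\varphi(x)\geq K\nu(x)\geq C_5^{-1}Kg(|x|)$ for $|x|>r+1$) together with $g(|y|)\leq g(1)$ and strict positivity of $\varphi$ on the compact shell $1\leq|y|\leq r+1$. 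With these replacements, case (ii) uses $r_n\to\infty$ with $g(r_n-1)\geq ng(r_n)$ and $x_n=(r_n,0,\dots,0)$, bounding $\varphi(x_n)$ below through \eqref{eq:IWF} by the contribution of paths that remain in $B(x_n,1)$ up to time $1$ and then jump into a fixed ball near the origin, giving $\varphi(x_n)\geq cn\,g(r_n)\geq c'n\,\nu(x_n)$; case (i) uses $s_n\to\infty$ realising \eqref{eq:aux_seq}, $x_n=(s_n,0,\dots,0)$, and a single-jump \eqref{eq:IWF} estimate on $B(x_n,1/2)$ combined with $\varphi\geq c_1g$ to obtain $\varphi(x_n)\geq cn\,g(|x_n|)\geq c'n\,\nu(x_n)$. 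Either way $\limsup_{|x|\to\infty}\varphi(x)/\nu(x)=\infty$.

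I do not expect a real obstacle once the reduction is set up. The step needing most care is the bookkeeping that fixes the effective killing rate at exactly $|\lambda|+\delta$ — choosing $\theta$ with $\theta+\lambda>0$ and $\theta+\sup_{|y|>r}V(y)\leq|\lambda|+\delta$ simultaneously — together with checking that $f$ in part (1) is genuinely $(X,\eta)$-harmonic at infinity with strictly positive mass on $B(0,r)$, so that Theorem~\ref{prop:lowerboundharm} applies with no change of constants.
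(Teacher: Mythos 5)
Your proof is correct and follows essentially the same route as the paper: reduce via \eqref{eq:eig1} (dropping the nonnegative first term, with $\theta=|\lambda|+\delta/2$ and killing rate $\eta=|\lambda|+\delta$) to harmonicity at infinity, then invoke Theorem~\ref{prop:lowerboundharm}. The only cosmetic difference is that in part (2) the paper applies Theorem~\ref{prop:lowerboundharm}(2) directly to the auxiliary function $f$ (which $\varphi$ dominates), whereas you rerun that theorem's argument using the one-sided inequality for $\varphi$ itself; the two are interchangeable and require the same verifications.
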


\begin{proof}
Take any $\delta>0$. Note that by assumption (A4) there exists $r \geq 1$ such that $\sup_{|y| \geq r} |V(y)| \leq
\delta/2$. Also, let $\theta = |\lambda| + \delta/2$. With this, an application of \eqref{eq:eig1} to $D=\overline B(0,r)^c$
gives for every $|x| > r$
$$
\varphi(x) \geq \ex^x\left[\tau_{\overline B(0,r)^c} <
\infty; e^{- (\theta+\delta/2) \tau_{\overline B(0,r)^c}} \varphi(X_{\tau_{\overline B(0,r)^c}}) \right].
$$
Both assertions (1) and (2) follow now by direct application of Theorem \ref{prop:lowerboundharm} to the function $f(y)=\varphi(y)$,
$|y| \leq r$, $f(y)=\ex^{y}\left[\tau_{\overline B(0,r)^c} < \infty; e^{- (|\lambda| + \delta) \tau_{\overline B(0,r)^c}}
\varphi(X_{\tau_{\overline B(0,r)^c}}) \right]$, $|y|  > r$, noticing that it is regular $(X,|\lambda| + \delta)$-harmonic in
$\overline B(0,r)^c$.
\end{proof}

\subsection{Upper bound: cases of sufficiently low-lying and arbitrary negative eigenvalues}

In this subsection we state our main results on the upper bounds of eigenfunctions in their most general form. They
are consequences of the estimates for harmonic functions obtained in Section 3 and will be illustrated and discussed
in detail in the next subsection, where we analyze specific classes and examples of processes of interest.

The first result says that whenever an eigenvalue is sufficiently low-lying below zero with respect to the
given jump-paring L\'evy process, the corresponding eigenfunction is dominated by $\nu$ at infinity.

\begin{theorem} [\textbf{Low-lying negative eigenvalues}]
\label{thm:defic6}
Let $\pro X$ be a L\'evy process with L\'evy-Khinchin exponent $\psi$ given by \eqref{eq:Lchexp}-\eqref{eq:nuinf}
and let Assumptions (A1)--(A4) hold. Denote
\begin{align} \label{eq:eta0}
\eta_0(X):= 2 \, C_5^4 \, h_1(X, 1, 2) \, K_1^X(2)  + h_2(X, 1) \, |B(0,2)| \, K_3^X(2, 3, \infty).
\end{align}
Suppose that $\varphi \in L^2(\R^d)$ is an eigenfunction for an eigenvalue $\lambda \in (-\infty, -\eta_0)$.
Then there exist $C_{25} = C_{25}(X,\lambda)$ and $R=R(X,\lambda) >0$ such that
$$
|\varphi(x)| \leq C_{25} \left\|\varphi\right\|_{\infty} \nu(x), \quad |x| \geq R.
$$
\end{theorem}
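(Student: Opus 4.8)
The plan is to reduce the eigenvalue equation on the exterior of a large ball to a purely harmonic-at-infinity estimate for the process killed at a constant rate close to $|\lambda|$, and then to invoke Lemma~\ref{lem:defic1} with the radii $r_1=1$, $r_2=2$, $r_3=3$. The crucial observation is that for this choice the left-hand side of the balancing condition \eqref{eq:cond1} is exactly $\eta_0(X)$ of \eqref{eq:eta0}, so \eqref{eq:cond1} holds for every killing rate $\eta>\eta_0(X)$.

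First I would fix $\delta>0$ small enough that $\eta:=|\lambda|-\delta/2>\eta_0(X)$ (possible since $\lambda<-\eta_0$), use (A4) to choose $r\ge1$ with $\sup_{|y|\ge r}|V(y)|\le\delta/2$, and set $D=\overline B(0,r)^c$. For $\epsilon>0$ apply the resolvent identity \eqref{eq:eig1} with this $D$ and $\theta=|\lambda|+\epsilon$ (so $\theta+\lambda=\epsilon>0$): for $|x|>r$,
\begin{align*}
\varphi(x)=\epsilon\,\ex^x\Big[\int_0^{\tau_D}e^{-\int_0^t(\theta+V(X_s))ds}\varphi(X_t)\,dt\Big]+\ex^x\Big[\tau_D<\infty;\,e^{-\int_0^{\tau_D}(\theta+V(X_s))ds}\varphi(X_{\tau_D})\Big].
\end{align*}
Since $X_s\in D$ for $s<\tau_D$, on those times $\theta+V(X_s)\ge|\lambda|+\epsilon-\delta/2\ge\eta>0$; hence the first term is bounded in modulus by $\epsilon\|\varphi\|_\infty\int_0^\infty e^{-\eta t}dt=\epsilon\|\varphi\|_\infty/\eta\to0$ as $\epsilon\downarrow0$, while the second converges by dominated convergence (its integrand is dominated by $\|\varphi\|_\infty e^{-\eta\tau_D}\le\|\varphi\|_\infty$, uniformly in $\epsilon\ge0$). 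Letting $\epsilon\downarrow0$ therefore yields the key representation
\begin{align*}
\varphi(x)=\ex^x\Big[\tau_D<\infty;\,e^{-|\lambda|\tau_D-\int_0^{\tau_D}V(X_s)ds}\varphi(X_{\tau_D})\Big],\qquad |x|>r.
\end{align*}

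Next, using once more $|V|\le\delta/2$ on $D$ together with $|\varphi|\le\|\varphi\|_\infty$, I would bound $|\varphi(x)|\le\|\varphi\|_\infty\,\ex^x[\tau_D<\infty;\,e^{-\eta\tau_D}]=\|\varphi\|_\infty\,f(x)$ for $|x|>r$, where $f$ is precisely the function \eqref{eq:modelhf} associated with $\overline B(0,r)^c$ and the rate $\eta$. This $f$ is non-negative, bounded by $1$, satisfies $\int_{B(0,r)}f\,dz=|B(0,r)|>0$, and is regular $(X,\eta)$-harmonic in $\overline B(0,r)^c$ (immediate from the strong Markov property, noting $f\equiv1$ on $\overline B(0,r)$). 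I would then apply Lemma~\ref{lem:defic1} to $f$ with $r_1=1$, $r_2=2$, $r_3=3$: by the observation above, condition \eqref{eq:cond1} reads $\eta_0(X)<\eta$, which holds by the choice of $\delta$, so $f(x)\le C_{14}(X,\eta)\,\nu(x)$ for $|x|\ge R$, with $R=((r+1)\vee3)+1$. Combining the two bounds gives $|\varphi(x)|\le C_{14}(X,\eta)\,\|\varphi\|_\infty\,\nu(x)$ for $|x|\ge R$, and one takes $C_{25}:=C_{14}(X,\eta)$ and $R$ as constructed.

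The one step requiring care is the passage $\theta\downarrow-\lambda$ above: keeping $\theta+\lambda$ strictly positive would leave a leftover term of order $\|\varphi\|_\infty$ carrying no spatial decay, so it is essential to send $\theta+\lambda$ to $0$, and this is legitimate exactly because on $D$ the effective potential $|\lambda|+V$ stays bounded below by $\eta>0$ — that lower bound is what makes both the uniform estimate of the Green term and the dominated convergence go through. Once the representation with $\theta+\lambda=0$ is available and one recognizes $\eta_0(X)$ as the value of the left side of \eqref{eq:cond1} at $(r_1,r_2,r_3)=(1,2,3)$, the remainder is a direct application of Lemma~\ref{lem:defic1}; everything else is routine bookkeeping.
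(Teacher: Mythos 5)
Your proposal is correct and follows essentially the same route as the paper's own proof: truncate $V$ outside a large ball using (A4), apply the resolvent identity \eqref{eq:eig1} with $\theta \downarrow |\lambda|$, reduce to the Laplace transform \eqref{eq:modelhf} of the exit time (which is regular $(X,\eta)$-harmonic), and invoke Lemma~\ref{lem:defic1} with $(r_1,r_2,r_3)=(1,2,3)$ after recognizing $\eta_0(X)$ as the left-hand side of \eqref{eq:cond1} for that choice. The only cosmetic difference is that you first pass to an exact representation of $\varphi$ in the limit $\theta\downarrow|\lambda|$ and then estimate, whereas the paper estimates $|\varphi|$ at each fixed $\theta>|\lambda|$ and lets $\theta\to|\lambda|$ at the end; both are valid and yield the identical bound.
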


\begin{proof}
Choose $\theta > |\lambda|$. Let $\varepsilon >0$ be small enough such that $|\lambda|-\varepsilon > \eta_0$, and $r > 0$ be large enough such that
$\sup_{|y| \geq r}|V(y)| \leq \varepsilon$. Denote $\tau_r := \tau_{\overline B(0,r)^c}$. By \eqref{eq:eig1} applied to
$D = \overline B(0,r)^c$, for every $|x| > r$ we have
\begin{align*}
|\varphi(x)|
& \leq
(\theta-|\lambda|) \ex^x\left[ \int_0^{\tau_r} e^{-(\theta-\varepsilon)t} |\varphi(X_t)| dt\right] +
\ex^x\left[\tau_r < \infty; e^{-\int_0^{\tau_r}(\theta+V(X_s))ds} |\varphi(X_{\tau_r})| \right] \\
& \leq
\left\|\varphi\right\|_{\infty} \left(\frac{\theta-|\lambda|}{\theta-\varepsilon} +
\ex^x\left[e^{-(\theta-\varepsilon) \tau_r} \right] \right).
\end{align*}
Thus by taking the limit $\theta \to |\lambda|$ we obtain
$$
|\varphi(x)| \leq \left\|\varphi\right\|_{\infty}\ex^x\left[e^{-(|\lambda|-\varepsilon) \tau_r}
\right], \quad |x|    > r.
$$
To complete the proof it remains to apply Lemma \ref{lem:defic1} to the function $f$ as in \eqref{eq:modelhf}
which is $(X,\eta)$-harmonic in $\overline B(0,r)^c$ with $\eta=|\lambda|-\varepsilon$. Indeed,
by \eqref{eq:eta0} its assumptions are now satisfied with $r_1=1$, $r_2 = 2$ and $r_3 = 3$.
\end{proof}
\noindent
Note that in fact in the above theorem it suffices to assume that (A3) holds for $R \leq 1$ only.

The next result gives sufficient conditions according to a slow or fast decay of the L\'evy
intensity under which the eigenfunctions corresponding to arbitrary eigenvalues $\lambda < 0$
are bounded above at infinity by $\nu$. Recall the discussion of the specific conditions below as
done before Theorems \ref{thm:defic2}-\ref{thm:defic3}.

\begin{theorem} [\textbf{Arbitrary negative eigenvalues}]
\label{thm:defic4}
Let $\pro X$ be a L\'evy process with L\'evy-Khinchin exponent $\psi$ given by \eqref{eq:Lchexp}-\eqref{eq:nuinf}
and let Assumptions (A1)--(A4) hold. Suppose that $\varphi \in L^2(\R^d)$ is an eigenfunction at the eigenvalue
$\lambda < 0$. Consider the following cases.
\begin{itemize}
\item[(1)]
\texttt{L\'evy intensities with slow decay at infinity}: conditions \eqref{eq:nu_dbl} and
\eqref{eq:Green_bdd} hold.
\item[(2)]
\texttt{L\'evy intensities with fast decay at infinity}: conditions \eqref{eq:fsm}-\eqref{eq:comp_dens},
\eqref{eq:suf1} and \eqref{eq:intr_killing}-\eqref{eq:unif_bdd} hold.
\end{itemize}
In both cases (1) and (2) above there exist $C_{24}= C_{24}(X,\lambda)$ and $R=R(X,\lambda) >0$ such that
$$
|\varphi(x)| \leq C_{24} \left\|\varphi\right\|_{\infty} \nu(x), \quad |x| \geq R.
$$
\end{theorem}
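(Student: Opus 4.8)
The plan is to follow the scheme of the proof of Theorem~\ref{thm:defic6}, the only change being that in place of the balancing condition \eqref{eq:cond1} of Lemma~\ref{lem:defic1} one now invokes the ready-made upper bounds for functions harmonic at infinity: Theorem~\ref{thm:defic2} in case~(1) and Theorem~\ref{thm:defic3} in case~(2). In this way the pointwise bound on the eigenfunction is reduced, via the resolvent representation \eqref{eq:eig1}, to an upper bound on the Laplace transform of the first hitting time of a ball by the free process $\pro X$.

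First I would fix $\theta > |\lambda|$ and an arbitrary $\varepsilon \in (0,|\lambda|)$, and use assumption (A4) to choose $r = r(V,\varepsilon) \geq 1$ with $\sup_{|y| \geq r}|V(y)| \leq \varepsilon$. Writing $\tau_r := \tau_{\overline B(0,r)^c}$ and applying \eqref{eq:eig1} with $D = \overline B(0,r)^c$, one notes that on the event $\{s < \tau_r\}$ one has $\theta + V(X_s) \geq \theta - \varepsilon$, so the first term on the right-hand side is at most $(\theta - |\lambda|)\|\varphi\|_\infty \int_0^\infty e^{-(\theta-\varepsilon)t}\,dt = \frac{\theta-|\lambda|}{\theta-\varepsilon}\|\varphi\|_\infty$, while the boundary term is at most $\|\varphi\|_\infty\,\ex^x[e^{-(\theta-\varepsilon)\tau_r}]$. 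Letting $\theta \downarrow |\lambda|$ then gives $|\varphi(x)| \leq \|\varphi\|_\infty\,\ex^x[e^{-(|\lambda|-\varepsilon)\tau_r}]$ for all $|x| > r$.

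Next I would observe that, extended by $\1_{B(0,r)}$ on $B(0,r)$, the right-hand side above is precisely the bounded function $f$ of \eqref{eq:modelhf} with $\eta := |\lambda|-\varepsilon > 0$, which is $(X,\eta)$-harmonic in $\overline B(0,r)^c$. In case~(1) conditions \eqref{eq:nu_dbl} and \eqref{eq:Green_bdd} are in force, so Theorem~\ref{thm:defic2} provides $C = C(X,\eta)$ and $R = R(X,\eta)$ with $f(x) \leq C\nu(x)$ for $|x| \geq R$. In case~(2) conditions \eqref{eq:fsm}--\eqref{eq:comp_dens}, \eqref{eq:suf1} and \eqref{eq:intr_killing}--\eqref{eq:unif_bdd} are in force, so Theorem~\ref{thm:defic3}, applied with this $\eta$ and this $r$, provides $C = C(X,\eta,r)$ and $R = R(X,\eta,r) > r$ with $\ex^x[e^{-\eta\tau_r}] \leq C\nu(x)$ for $|x| \geq R$. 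Combining either bound with the estimate of the previous paragraph yields $|\varphi(x)| \leq C_{24}\|\varphi\|_\infty\,\nu(x)$ for $|x| \geq R$, with $C_{24}$ and $R$ depending only on $X$ and $\lambda$ once $\varepsilon$ and $r$ are fixed as functions of $\lambda$ (and $V$).

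There is no real obstacle at this stage, since all the difficulty has been absorbed into Theorems~\ref{thm:defic2} and \ref{thm:defic3} (the latter being the delicate one, resting on the domination scheme of Proposition~\ref{prop:domination}). The only points requiring a little care are that $\varepsilon$ may be chosen freely in $(0,|\lambda|)$, so that only the positivity of $\eta = |\lambda|-\varepsilon$ is used — which is why, unlike in Theorem~\ref{thm:defic6}, no low-lying condition on $\lambda$ is needed here — and that $r$ must be taken large enough to serve simultaneously as the scale beyond which $|V| \leq \varepsilon$ and as an admissible radius of harmonicity in Theorems~\ref{thm:defic2}--\ref{thm:defic3}; since $r$ may always be enlarged, this causes no difficulty.
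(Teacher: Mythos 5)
Your argument coincides with the paper's own: after reducing $|\varphi|$ via the resolvent representation \eqref{eq:eig1} to the Laplace transform $\ex^x[e^{-(|\lambda|-\varepsilon)\tau_r}]$ exactly as in Theorem \ref{thm:defic6}, you invoke Theorem \ref{thm:defic2} in case (1) (applied to the $(X,\eta)$-harmonic extension \eqref{eq:modelhf}) and Theorem \ref{thm:defic3} in case (2). This is precisely the paper's proof, presented with a bit more explicit detail.
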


\begin{proof}
Let $\lambda$ be as in the assumption. Take $\varepsilon
>0$ small enough so that $|\lambda|-\varepsilon > 0$, and $r > 0$ large enough such that
$\sup_{|y| \geq r}|V(y)| \leq \varepsilon$. Denote $\tau_r := \tau_{\overline B(0,r)^c}$.
By the same argument as in the proof of Theorem \ref{thm:defic6} above we get
\begin{align*}
|\varphi(x)| \leq \left\|\varphi\right\|_{\infty} \ex^x\left[e^{-(|\lambda|-\varepsilon) \tau_r} \right], \quad |x| > r.
\end{align*}
The claimed bound now follows for cases (1) and (2) by an application of Theorems \ref{thm:defic2} and \ref{thm:defic3}
with $\eta=|\lambda|-\varepsilon$, respectively.
\end{proof}
\noindent
Recall that whenever $\pro X$ is an isotropic unimodal jump-paring L\'evy process in $\R^d$, $d \geq 3$, conditions
\eqref{eq:Green_bdd}, \eqref{eq:suf1}, \eqref{eq:comp_dens} automatically hold, and the assumptions above dividing
into specific subclasses reduce to \eqref{eq:nu_dbl} in case (1), and to \eqref{eq:fsm} and the basic conditions
\eqref{eq:intr_killing}-\eqref{eq:unif_bdd} in case (2). As it will be seen below, assumptions
\eqref{eq:intr_killing}-\eqref{eq:unif_bdd} are essential and sharp in the sense that for a large class of
processes they actually provide a necessary and sufficient condition for the ground state to be comparable to the
L\'evy density $\nu$ at infinity (see discussion preceding Proposition \ref{prop:sharp}).

\subsection{Specific cases}
\label{subsec:conseq}
Throughout this subsection we assume that $\lambda < 0$. As it follows from the results above, the behaviour of the
eigenfunctions at infinity significantly depends on the relative position of the corresponding eigenvalue from the
edge of the continuous spectrum and on the decay properties of the L\'evy intensity. To illustrate this in some explicit
detail, consider the specific choice
\begin{align} \label{eq:psi_ex}
\psi(\xi) = a |\xi|^2+ \int (1-\cos(\xi \cdot z)) \nu(z) dz, \quad \xi \in \R^d,
\end{align}
of the characteristic exponent (\ref{eq:Lchexp}), where $a \geq 0$, $\nu(dx)=\nu(x)dx$ is such that
$\nu(\R^d \backslash \left\{0\right\}) = \infty$, $\nu(x)=\nu(-x)$, and there is a non-increasing function
$g:(0,\infty) \to (0,\infty)$ such that
\begin{align}  \label{eq:nu_g}
C_{26} g(|x|) \leq \nu(x) \leq C_{27} g(|x|), \quad x \in \R^d \backslash \left\{0\right\},
\end{align}
with constants $C_{26} \in (0,1]$ and $C_{27} \in [1, \infty)$. Moreover, whenever $C_{26}=C_{27}=1$
(i.e., $\nu$ is a non-increasing radial function), we impose the regularity condition
\begin{align}
\label{eq:g_low}
g(r) \geq \frac{C_{28}}{r^d}, \quad r \in (0,1],
\end{align}
on the small jumps with a constant $C_{28}$. On the other hand, when $C_{26} < 1$ or $C_{27}   >1$, we assume
that \eqref{eq:weakscaling} holds. In the proofs below we will often use the fact that \eqref{eq:g_low} and \eqref{eq:weakscaling}
imply $\liminf_{|\xi| \to \infty} \psi(\xi)/\log |\xi| > 0$, which guarantees that assumption (A2) holds. 
Indeed, observe that by \cite[Lem. 5 (a)]{bib:KS14}, \eqref{eq:PruitH} and \eqref{eq:nu_g},
$$
\psi(\xi) \asymp \Psi(|\xi|) \geq C_1 H(1/|\xi|) \geq C_1 a|\xi|^2 + C_1 C_{26}\int_{1/|\xi| < |y|< 1} g(|y|)dy, \quad |\xi| > 1.
$$
From this we see that $\psi(\xi) \geq C(a |\xi|^2 + \log|\xi|)$ and $\psi(\xi) \geq C(a |\xi|^2 + |\xi|^{\gamma_1})$, $|\xi| > 1$, with some $C>0$ under \eqref{eq:g_low} and \eqref{eq:weakscaling}, respectively. In particular, $p(t,0) = \int_{\R^d} e^{-t \psi(\xi)} d \xi < \infty$, for sufficiently large $t >0$ and (A2) holds. Clearly, when $a>0$, the same is true even without \eqref{eq:g_low} and \eqref{eq:weakscaling}.

\begin{corollary} [\textbf{Polynomially decaying L\'evy intensities}]
\label{thm:polynomial}
Let $\pro X$ be a L\'evy process with L\'evy-Khinchin exponent $\psi$ given by \eqref{eq:psi_ex}--\eqref{eq:nu_g}.
Also, if \eqref{eq:nu_g} holds with $C_{26}=C_{27}=1$, then assume \eqref{eq:g_low}, if it holds with other values
of constants, then assume \eqref{eq:weakscaling}. Let the profile $g$ satisfy
\begin{align}
\label{eq:polynomial}
g(r) = r^{-d-\delta}, \quad r \geq 1, \ \ \text{with} \ \ \delta >0.
\end{align}
If Assumption (A4) holds and $\varphi \in L^2(\R^d)$ is an eigenfunction corresponding to the eigenvalue $\lambda < 0$,
then there exist $C_{29}=C_{29}(X,\lambda)$ and $R=R(X,\lambda) \geq 1$ such that
$$
|\varphi(x)| \leq C_{29} \left\|\varphi\right\|_{\infty} |x|^{-d-\delta}, \quad |x| \geq R.
$$
Moreover, if $\varphi = \varphi_0$ is a ground state, then $\varphi_0(x) \asymp |x|^{-d-\delta}$, $|x| \geq R$.
\end{corollary}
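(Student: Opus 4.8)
The plan is to deduce Corollary \ref{thm:polynomial} from the general upper bound of Theorem \ref{thm:defic4}(1) (the slow-decay case) together with the lower bound of Theorem \ref{prop:lowerbound}(1). The only substantive work is to verify that a symmetric L\'evy process whose intensity has profile $g(r)=r^{-d-\delta}$ for $r \geq 1$ fits the standing hypotheses of those theorems, namely (A1)--(A4), \eqref{eq:nu_dbl} and \eqref{eq:Green_bdd}; once this is done, both inequalities in the corollary follow by substituting $\nu(x)\asymp|x|^{-d-\delta}$ from \eqref{eq:nu_g} and \eqref{eq:polynomial}.

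First I would check the jump-paring structure of Definition \ref{jumpparing}. Monotonicity (A1.1) and the doubling bound (A1.2) are immediate from the explicit polynomial form of $g$ on $[1,\infty)$, its positivity and monotonicity on $(0,1]$, and its comparability to $\nu$ via \eqref{eq:nu_g}. For the jump-paring property (A1.3) one uses the classical convolution estimate for polynomially decaying functions: split $\{|x-y|>1,\,|y|>1\}$ into $\{|y|\leq |x|/2\}$ and $\{|y|>|x|/2\}$; on the first region $g(|x-y|)\leq g(|x|/2)\leq 2^{d+\delta}g(|x|)$ and the leftover $\int_{|y|>1}g(|y|)\,dy$ is finite because $\delta>0$, while the second region is symmetric, and for bounded $|x|$ the bound is trivial since $g(|x|)$ is bounded below there. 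Assumption (A2) holds by the argument recorded just before the statement: \eqref{eq:g_low} in the radial case and \eqref{eq:weakscaling} otherwise force $\liminf_{|\xi|\to\infty}\psi(\xi)/\log|\xi|>0$, hence $e^{-t_{\rm b}\psi}\in L^1(\R^d)$ for large $t_{\rm b}$ (and trivially if $a>0$); and (A3) follows from the rough bound $\sup_{|x|\geq r}p(t,x)\leq C(r)\,t$, $t>0$, combined with the general Green-function estimate \eqref{eq:est3}.

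Next, condition \eqref{eq:nu_dbl} is elementary here: if $r\geq 1$, $|y|\leq r$ and $|x|\geq 2r$, then $|x-y|\geq |x|/2\geq 1$, so $g(|x-y|)=|x-y|^{-d-\delta}\leq 2^{d+\delta}|x|^{-d-\delta}=2^{d+\delta}g(|x|)$, and \eqref{eq:nu_g} upgrades this to $\nu(x-y)\leq (C_{27}/C_{26})\,2^{d+\delta}\,\nu(x)$. Condition \eqref{eq:Green_bdd} is where the real effort lies. I would obtain it from Lemma \ref{lem:ver_cond4} applied with $\Phi=\Psi$ and $\Theta=0$, which reduces it to the heat-kernel bounds \eqref{eq:suf1} and \eqref{eq:suf2}. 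In the isotropic unimodal case ($C_{26}=C_{27}=1$, with \eqref{eq:g_low}) these are supplied by the sharp estimates of \cite{bib:BGR}; in the general case one adapts the proof of Proposition \ref{prop:bound_by_psi}, observing that the only place the finite-second-moment hypothesis \eqref{eq:fsm} is used there is to secure $\psi_{\nu}(\xi)\gtrsim|\xi|^2\wedge|\xi|^{\gamma_1}$ near the origin, which here is furnished instead by \eqref{eq:weakscaling} together with the polynomial tails, so that the estimates of \cite{bib:KS14} apply as stated. This verification of \eqref{eq:Green_bdd}, together with the small-time/large-time heat-kernel bookkeeping it requires, is the step I expect to be the main obstacle.

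Once (A1)--(A4), \eqref{eq:nu_dbl} and \eqref{eq:Green_bdd} are in place, Theorem \ref{thm:defic4}(1) gives $|\varphi(x)|\leq C_{24}\|\varphi\|_\infty\,\nu(x)\leq C_{24}C_{27}\|\varphi\|_\infty\,|x|^{-d-\delta}$ for $|x|\geq R$, which is the asserted upper bound with $C_{29}=C_{24}C_{27}$. When $\varphi=\varphi_0$ is the (strictly positive) ground state, its eigenvalue satisfies $\lambda_0<0$ by Proposition \ref{prop:weder}, so Theorem \ref{prop:lowerbound}(1) applies and yields $\varphi_0(x)\geq K\,\nu(x)\geq K\,C_{26}\,|x|^{-d-\delta}$ for $|x|>r+1$ with $K>0$; combining this with the upper bound gives $\varphi_0(x)\asymp|x|^{-d-\delta}$ for large $|x|$, which finishes the proof.
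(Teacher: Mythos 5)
Your proposal follows essentially the same route as the paper: verify (A1)--(A4), \eqref{eq:nu_dbl} and \eqref{eq:Green_bdd} for the polynomial profile, then apply Theorem \ref{thm:defic4}(1) for the upper bound and Theorem \ref{prop:lowerbound}(1) for the lower bound when $\varphi$ is the ground state. The paper is a bit more explicit about the form of $\Psi$ near zero (it separates $\delta<2$, $\delta=2$, $\delta>2$ and notes that for $\delta\in(0,2]$ one must retrace, not merely cite, Proposition \ref{prop:bound_by_psi} since \eqref{eq:fsm} fails), but this is the same argument you sketch, so there is no substantive discrepancy.
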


\begin{proof}
First observe that (A1.1) is assumed by default, and the remaining conditions (A1.2)-(A1.3) easily follow from
\eqref{eq:polynomial}. Assumption (A2) is a consequence of \eqref{eq:g_low} or \eqref{eq:weakscaling}, and
\eqref{eq:Green_bdd} follows from \eqref{eq:suf2} and \eqref{eq:suf1} as in Lemma \ref{lem:ver_cond4}. Indeed, by
assuming \eqref{eq:polynomial} and by directly checking that
$$
\Psi(r) \asymp \left\{
\begin{array}{lrl}
r^{\delta}   & \mbox{  for  } & 0 < \delta < 2,
\vspace{0.2cm} \\
r^2 \log(1/r)   & \mbox{  for  } & \delta = 2,
\vspace{0.2cm} \\
r^2   & \mbox{  for  } & \delta  > 2,
\end{array} \right. \quad r \in (0,1],
$$
condition \eqref{eq:suf2} with $\Phi=\Psi$ can be established similarly as in \eqref{eq:aux_est_1}.
Moreover, whenever $C_{26}=C_{27}=1$, \eqref{eq:suf1} holds automatically for all $r >0$.
Otherwise, due to \eqref{eq:weakscaling} and \eqref{eq:polynomial},
the same holds by Proposition \ref{prop:bound_by_psi}. Specifically,
when $\delta >2$, \eqref{eq:suf1} follows directly from the statement of this result, and when $\delta \in (0,2]$,
this can be obtained by following through the argument in the proof. Assumption (A3) holds
as well. Secondly, notice that \eqref{eq:nu_dbl} immediately follows from \eqref{eq:polynomial}. The proof of the upper
bound can be completed by an application of Theorem \ref{thm:defic4} in case (1). The lower bound on $\varphi_0$ follows
directly by Theorem \ref{prop:lowerbound}.
\end{proof}

\begin{remark} \label{rem:poly}
{\rm
The assumptions of the above theorem cover the cases when
$$
g(r) = r^{-d-\gamma} \1_{\left\{r \in (0,1]\right\}} + r^{-d-\delta} \1_{\left\{r \geq 1\right\}}, \quad \mbox{with}
\quad \gamma \in [0, 2) \ \ \text{and} \ \ \delta > 0,
$$
and $\nu(x)=g(|x|)$ for $\gamma \in [0, 2)$, or more generally, $\nu(x) \asymp g(|x|)$ for $\gamma \in (0, 2)$. For $a=0$,
the first class includes some important cases of subordinate Brownian motion and isotropic unimodal L\'evy processes like
the isotropic $\alpha$-stable process ($\gamma=\delta=\alpha \in (0,2)$), their mixtures of stability indices $\alpha_1,
...,\alpha_n$ ($\gamma= \min_i \alpha_i$, $\delta = \max_i \alpha_i$), geometric $\alpha$-stable process ($\gamma=0$,
$\delta = \alpha \in (0,2)$), layered $\alpha$-stable processes ($\gamma = \alpha \in (0,2)$, $\delta > 2$), and many
others. All of the above examples can also be considered with a non-zero Brownian component ($a>0$), which covers a wide
class of jump-diffusions. The second case, where $\nu$ is not a strictly radial function, allows a variety of
polynomial-type processes like symmetric stable-like, strictly stable, and other L\'evy processes with jump intensities
comparable to the cases of isotropic processes as above.
}
\end{remark}

Next we consider the case when the L\'evy intensity is lighter than polynomial but heavier than exponential at infinity.

\begin{corollary} [\textbf{Sub-exponentially decaying L\'evy intensities}]
\label{thm:subexp}
Let $\pro X$ be a L\'evy process with L\'evy-Khinchin exponent $\psi$ given by \eqref{eq:psi_ex}--\eqref{eq:nu_g}.
Moreover, if \eqref{eq:nu_g} holds with $C_{26}=C_{27}=1$, then assume \eqref{eq:g_low}, if it holds with other values
of constants, then assume \eqref{eq:weakscaling}. Also, let the profile $g$ satisfy
\begin{align} \label{eq:subexp}
g(r) = e^{-c r^{\beta}} r^{-\delta}, \quad r \geq 1, \quad \text{with} \ \ c>0, \ \beta \in (0,1) \ \ \text{and}
\ \ \delta \geq 0.
\end{align}
If Assumption (A4) holds and $\varphi \in L^2(\R^d)$ is an eigenfunction at the eigenvalue $\lambda < 0$, then there exist
$C_{30}=C_{30}(X,\lambda)$ and $R=R(X,\lambda) \geq 1$ such that
$$
|\varphi(x)| \leq C_{30} \left\|\varphi\right\|_{\infty} e^{-c|x|^{\beta}}|x|^{-\delta}, \quad |x| \geq R.
$$
Moreover, if $\varphi = \varphi_0$ is a ground state, then $\varphi_0(x) \asymp e^{-c|x|^{\beta}}|x|^{-\delta}$, $|x| \geq R$.
\end{corollary}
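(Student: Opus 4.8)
The plan is to run the argument of Corollary \ref{thm:polynomial}, but now landing in case (2) of Theorem \ref{thm:defic4}: the profile \eqref{eq:subexp} is light at infinity and \eqref{eq:nu_dbl} fails (since $g(r)/g(2r)=2^\delta e^{c(2^\beta-1)r^\beta}\to\infty$), so the slow-decay branch is unavailable. First I would clear the routine hypotheses. Condition (A1.1) is part of \eqref{eq:nu_g}; (A1.2) holds because $g(r)/g(r+1)=e^{c((r+1)^\beta-r^\beta)}\bigl((r+1)/r\bigr)^\delta$ stays bounded as $(r+1)^\beta-r^\beta\to0$ (this is where $\beta<1$ first enters); and (A1.3) is the standard sub-exponential convolution bound for profiles $e^{-cr^\beta}r^{-\delta}$ with $\beta\in(0,1)$, obtainable by the same regional splitting used below (or quoted from \cite{bib:KL14,bib:KS14}). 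Assumption (A2) follows from \eqref{eq:g_low} or \eqref{eq:weakscaling} as recorded just before the statement, (A3) from the rough bound $\sup_{|x|\geq r}p(t,x)\leq C(r)\,t$ through \eqref{eq:est3}, and \eqref{eq:fsm} is immediate. Finally \eqref{eq:comp_dens} and \eqref{eq:suf1} hold automatically in the isotropic unimodal case $C_{26}=C_{27}=1$ (\cite{bib:BGR}), and otherwise follow from Proposition \ref{prop:bound_by_psi}, whose hypotheses \eqref{eq:fsm} and \eqref{eq:weakscaling} are in force.

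The substantive step is the verification of \eqref{eq:intr_killing}--\eqref{eq:unif_bdd}. For \eqref{eq:unif_bdd}, monotonicity of $g$ gives $K^X_2(s_1,s,\infty)\leq C\,\sup_{|x|\geq s}g(|x|-s_1)/g(|x|)$, and since $\beta<1$ we have $g(|x|-s_1)/g(|x|)=e^{c(|x|^\beta-(|x|-s_1)^\beta)}\bigl(|x|/(|x|-s_1)\bigr)^\delta\to1$ as $|x|\to\infty$; hence $\limsup_{s\to\infty}K^X_2(s_1,s,\infty)$ is bounded uniformly in $s_1$, giving \eqref{eq:unif_bdd}. For \eqref{eq:intr_killing} the key is the two-sided bound $K^X_1(s)\asymp\nu\bigl(B(0,s)^c\bigr)$, the lower bound being Lemma \ref{lem:tail_dom}. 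For the matching upper bound I would estimate, for $|x|\geq s$, the integral defining $K^X_1$ in \eqref{def:K1} by splitting $\{|x-y|>s,\,|y|>s\}$ into: the region where $|y|$ is moderate, where $|x-y|\asymp|x|$ so $g(|x-y|)\asymp g(|x|)$ and the contribution is at most a constant times $g(|x|)\int_{|y|>s}g(|y|)\,dy=g(|x|)\,\nu(B(0,s)^c)$; the symmetric region where $|x-y|$ is moderate; and the ``middle'' region where both $|y|$ and $|x-y|$ exceed a fixed fraction of $|x|$, on which the sub-additivity $|x-y|^\beta+|y|^\beta\geq|x|^\beta$ combined with a Laplace-type estimate (exploiting that both lengths are $\geq s$) keeps the contribution $\leq C\,g(|x|)\,\nu(B(0,s)^c)$ as well.

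Granting this, take $\kappa_1=2$: using $K^X_2(s,2s,\infty)\leq C\,g(s)/g(2s)=C'\,e^{c(2^\beta-1)s^\beta}$ and $K^X_1(2s)\leq C\,\nu(B(0,2s)^c)$, which decays like $e^{-c(2s)^\beta}$ up to a polynomial factor, the exponents cancel:
\begin{equation*}
K^X_1(2s)\,K^X_2(s,2s,\infty)\leq C\,\nu\bigl(B(0,2s)^c\bigr)\,\frac{g(s)}{g(2s)}\asymp P(s)\,e^{-c(2s)^\beta}\,e^{c((2s)^\beta-s^\beta)}=P(s)\,e^{-cs^\beta}\to0,
\end{equation*}
where $P$ is a polynomial, so \eqref{eq:intr_killing} holds (with $\kappa_1=2$). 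The exact cancellation of the $(2s)^\beta$ terms is precisely why the sharp exponential order $e^{-cs^\beta}$ in the bound for $K^X_1$ is needed: a slower rate would lose against the exponential growth of $K^X_2(s,2s,\infty)$.

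With all hypotheses in place, Theorem \ref{thm:defic4}(2) (whose proof reduces the bound to Theorem \ref{thm:defic3}) applied with $\eta=|\lambda|-\varepsilon$ for small $\varepsilon>0$ gives $|\varphi(x)|\leq C\,\|\varphi\|_\infty\,\nu(x)$ for $|x|\geq R$, and $\nu(x)\asymp g(|x|)=e^{-c|x|^\beta}|x|^{-\delta}$ on $\{|x|\geq1\}$ yields the claimed upper bound. When $\varphi=\varphi_0$ is the ground state it is strictly positive, so Theorem \ref{prop:lowerbound}(1) gives $\varphi_0(x)\geq K\,\nu(x)\geq K'\,e^{-c|x|^\beta}|x|^{-\delta}$ for large $|x|$, and combining the two bounds gives $\varphi_0(x)\asymp e^{-c|x|^\beta}|x|^{-\delta}$. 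The main obstacle is the sub-exponential convolution estimate producing $K^X_1(s)\asymp\nu(B(0,s)^c)$ with the correct exponential order; everything else is a routine invocation of the machinery of Sections 2--3.
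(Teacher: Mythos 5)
Your overall architecture is right and coincides with the paper's: clear the routine hypotheses (A1)--(A3), (A2), \eqref{eq:fsm}, \eqref{eq:comp_dens}, \eqref{eq:suf1} either from \cite{bib:BGR} or from Proposition \ref{prop:bound_by_psi}, verify \eqref{eq:intr_killing}--\eqref{eq:unif_bdd}, invoke Theorem \ref{thm:defic4}(2) for the upper bound and Theorem \ref{prop:lowerbound} for the lower bound on the ground state. Your verification of \eqref{eq:unif_bdd} is also fine: the supremum $g(s-s_1)/g(s)\to 1$ as $s\to\infty$ precisely because $\beta<1$.

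The gap is in the verification of \eqref{eq:intr_killing}, specifically the claimed two-sided bound $K^X_1(s)\asymp\nu(B(0,s)^c)$. The upper half of that bound is false. Recall $\nu(B(0,s)^c)\asymp s^{d-\delta-\beta}e^{-cs^\beta}$ by a Laplace estimate. Now take $|x|$ just above $2s$ and $y$ near the intersection of the two constraint spheres, so that $|y|\approx|x-y|\approx s$; the integrand there is $\approx g(s)^2$ and the local volume contribution is of order a constant, so
\begin{equation*}
K^X_1(s)\;\gtrsim\;\frac{g(s)^2}{g(2s)}\;\asymp\;s^{-\delta}e^{-c(2-2^\beta)s^\beta}.
\end{equation*}
Since $0<2-2^\beta<1$ for every $\beta\in(0,1)$, this is much larger (exponentially, by a factor $e^{c(2^\beta-1)s^\beta}$) than $\nu(B(0,s)^c)$. (A similar test with $|x|=3s$, $|y|=s$ on axis gives the rate $e^{c(3^\beta-2^\beta-1)s^\beta}$, again far above $e^{-cs^\beta}$.) The error in your regional-splitting sketch is in the ``$|y|$ moderate'' region: when $|y|\asymp s$ and $|x|\asymp 2s$ or $3s$, the ratio $g(|x-y|)/g(|x|)$ is not $\asymp 1$ but grows like $e^{c(|x|^\beta-|x-y|^\beta)}$, which at that scale is exponentially large in $s^\beta$.

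The paper therefore proves only the one-sided estimate
\begin{equation*}
K^X_1(s)\;\leq\;c_1\,s^{d-\delta-\beta}e^{-c(1-\beta)s^\beta},
\end{equation*}
obtained from the pointwise sub-additivity inequality $|y|^\beta+|x-y|^\beta\geq|x|^\beta+(1-\beta)(|y|\wedge|x-y|)^\beta$, together with $K^X_2(s_1,s_2,\infty)\leq c_2\,e^{c\beta s_1/(s_2-s_1)^{1-\beta}}$. With that weaker bound, your choice $\kappa_1=2$ does not produce the cancellation: the product $K^X_1(2s)K^X_2(s,2s,\infty)$ has exponent $cs^\beta\bigl[-(1-\beta)2^\beta+\beta\bigr]$, which tends to $-\infty$ only if $(1-\beta)2^\beta>\beta$, and that fails for $\beta$ close to $1$ (e.g. $\beta=0.9$ gives $1.87<9$). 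The paper takes $\kappa_1>2\vee(\beta/(1-\beta))^{1/\beta}$ precisely to compensate. So the conclusion of the corollary is correct, but your route to \eqref{eq:intr_killing} rests on a bound on $K^X_1$ that is not available; you need the weaker (correct) upper bound and a larger $\kappa_1$, exactly as the paper does.
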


\begin{proof}
The upper bound follows from Theorem \ref{thm:defic4} in case (2). Assumptions (A1.1)-(A1.2) hold by default, (A1.3) by
\cite[Prop. 2]{bib:KS14}, and (A2) follows from $\liminf_{|\xi| \to \infty} \psi(\xi)/\log |\xi|    > 0$, which is the
case under assumptions \eqref{eq:g_low} or \eqref{eq:weakscaling}. Also, \eqref{eq:fsm} is immediate and implies
\eqref{eq:suf2}. With this, whenever $C_{26}=C_{27}=1$,  assumptions \eqref{eq:comp_dens}, \eqref{eq:suf1} and (A3)
are satisfied automatically, otherwise they follow from Proposition \ref{prop:bound_by_psi}. Thus it suffices to verify the
remaining conditions \eqref{eq:intr_killing}-\eqref{eq:unif_bdd}.

First we show that there exist constants $c_1, c_2$ such that for every $s_2 > 2s_1 \geq 2$
$$
K^X_1(s_2) \leq c_1 s_2^{d-\delta-\beta} e^{-c(1-\beta){s_2}^{\beta}} \quad \text{and} \quad K^X_2(s_1, s_2, \infty) \leq
c_2 e^{\frac{c \beta s_1}{(s_2-s_1)^{1-\beta}}}.
$$
For the first bound, observe that for every $x,y \in \R^d$, $y \neq 0$, $x \neq y$, we have by Lagrange's theorem
\begin{align*}
|y|^{\beta} + |x-y|^{\beta} & = (|y| \vee |x-y|)^{\beta} + (|y| \wedge |x-y|)^{\beta} \\
& \geq (|y| + |x-y|)^{\beta} - \frac{\beta(|y| \wedge |x-y|)}{(|y| \vee |x-y|)^{1-\beta}}
+ (1-\beta)(|y| \wedge |x-y|)^{\beta} + \beta(|y| \wedge |x-y|)^{\beta} \\
& \geq |x|^{\beta} - \frac{\beta(|y| \wedge |x-y|)}{(|y| \vee |x-y|)^{1-\beta}}
+ (1-\beta)(|y| \wedge |x-y|)^{\beta} + \frac{\beta(|y| \wedge |x-y|)}{(|y| \wedge |x-y|)^{1-\beta}} \\
& \geq |x|^{\beta} + (1-\beta)(|y| \wedge |x-y|)^{\beta}.
\end{align*}
Thus, by \eqref{eq:subexp} and \eqref{def:K1}, we obtain
$$
K^X_1(s_2) \leq c_3 \int_{|y|>s_2} e^{-c(1-\beta)|y|^{\beta}} |y|^{-\delta} dy,
$$
which gives the required bound. A similar argument gives $|x|^{\beta}-|x-y|^{\beta} \leq \beta s_1/(s_2-s_1)^{1-\beta}$ for all $|x| \geq s_2$ and $|y| \leq s_1$, and thus
$$
g(|x-y|) = g(|x|) \frac{g(|x-y|)}{g(|x|)} \leq e^{\frac{c \beta s_1}{(s_2-s_1)^{1-\beta}}}
\left(1+\frac{s_1}{s_2-s_1} \right)^{\delta} g(|x|)   , \quad |x| \geq s_2, \ \ \ |y| \leq s_1,
$$
which yields the claimed bound for $K^X_2$. In particular, for $\kappa_1 \geq 2$ we have
$$
K^X_1(\kappa_1 s_1) K^X_2(s_1, \kappa_1 s_1, \infty) \leq c_1 c_2 (\kappa_1 s_1)^{d-\delta-\beta}e^{-c \left((1-\beta) \kappa_1^{\beta} - \frac{\beta }
{(\kappa_1-1)^{1-\beta}} \right) {s_1}^{\beta}}, \quad s_1 \geq 1, \ \ \kappa_1 \geq 2,
$$
which implies \eqref{eq:intr_killing} for every $\kappa_1 > 2 \vee (\beta/(1-\beta))^{1/\beta}$. Condition
\eqref{eq:unif_bdd} with $\kappa_2 \geq c_2$ easily follows from the upper bound of $K^X_2$ established above. The lower
bound on $\varphi_0$ is a direct consequence of Theorem \ref{prop:lowerbound}.
\end{proof}

\begin{remark}
\label{rem:inter}
{\rm
With some lengthy but straightforward computations we can show that the decay of $\nu(x)$ at infinity interpolating between
the polynomial and sub-exponential cases such as $|x|^{-\log|x|}$ leads to similar bounds of eigenfunctions in terms of
$\nu(x)$ as in Corollaries \ref{thm:polynomial}-\ref{thm:subexp}. The same applies for $\nu(x) \asymp e^{-|x|/\log|x|}$, $|x|
\to \infty$. Note that this decay rate is even closer to the strictly exponential case than in Corollary \ref{thm:subexp}.
The details are left to the reader.
}
\end{remark}

\subsection{Phase transition in the decay rates and strongly tempered L\'evy intensities}
\label{subsec:phase}

As seen in the previous subsection, the fall-off of the ground state is determined by the L\'evy intensity $\nu$ as long as this
decays strictly sub-exponentially. We now describe a qualitative change (``phase transition") in the decay rate behaviour. We show
that if $|\lambda|$ is small with respect to the spectral edge of the free process and $\nu$ decreases exponentially, then the decay
rate of $\varphi_0$ is slower than the decay rate of $\nu$, with essential contribution of $\lambda$. However, when $|\lambda|$ is large enough, the
fall-off is again dominated by $\nu$ as long as the basic jump-paring condition (A1.3) holds (recall that this implies (A1.2)). In
particular, this partly includes the exponential case. On the other hand, when (A1.2) or (A1.3) fails, then as proved in Theorem
\ref{prop:lowerbound}, the fall-off of $\varphi_0$ is slower than the decay of $\nu$, no matter how large $|\lambda_0|$ is. This class includes not only super-exponentially decaying
L\'evy intensities, but also cases of exponentially decaying L\'evy intensities for which the jump paring condition does not hold, so
the exponential case falls on the dividing line in a peculiar way.

First we consider the exponential case.
\begin{corollary}  [\textbf{Exponentially decaying L\'evy intensities}]
\label{prop:exp}
\noindent
Let $\pro X$ be a L\'evy process with L\'evy-Khinchin exponent $\psi$ given by \eqref{eq:psi_ex}--\eqref{eq:nu_g} with $a=0$
and let Assumption (A4) hold.
Moreover, let \eqref{eq:weakscaling} be satisfied and assume
\begin{align} \label{eq:exp}
g(r) = e^{-c r} r^{-\delta}, \quad r \geq 1, \quad \text{with} \ \ c>0 \ \ \text{and} \ \ \delta \geq 0.
\end{align}
Then we have the following:
\begin{itemize}
\item[(1)]
If a ground state at eigenvalue $\lambda_0 < 0$ exists, then there exist
$C_{31} >0$ and $R>0$ such that
$$
\varphi_0(x) \geq C_{31} \, e^{-c |x|}|x|^{-\delta},
\quad |x| \geq R.
$$
\item[(2)]
If $\delta > (d+1)/2$ (i.e. (A1.3) holds), then for every eigenfunction $\varphi$ corresponding to the eigenvalue
$\lambda \in (-\infty, -\eta_0)$ there exists a constant $C_{32}>0$ and $R>0$ such that
$$
|\varphi(x)| \leq C_{32} \left\|\varphi\right\|_{\infty} e^{-c|x|} |x|^{-\delta}, \quad |x| \geq R.
$$
In particular, if $\delta > (d+1)/2$ and  $\lambda_0 \in (-\infty, -\eta_0)$, then $\varphi_0(x) \asymp e^{-c|x|}
|x|^{-\delta}$, $|x| \geq R$.
\item[(3)]
If \eqref{eq:weakscaling} holds with $\gamma_1=\gamma_2$ and a ground state at eigenvalue $\lambda_0 < 0$ exists, then there is a constant
$C_{33}    > c$ such that for every $\varepsilon >0$ there exist $C_{34} >0$ and $R>0$ for which
$$
\varphi_0(x) \geq C_{34} \left(e^{- C_{33} \sqrt{|\lambda_0|+\varepsilon} |x|} \ \vee \ e^{-c |x|}|x|^{-\delta} \right),
\quad |x| \geq R.
$$
\end{itemize}
\end{corollary}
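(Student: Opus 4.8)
The plan is to dispose of parts (1) and (2) quickly from results already available and to concentrate on part (3).

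\emph{Parts (1) and (2).} For part (1) I would first check the hypotheses of Theorem \ref{prop:lowerbound}: (A1.1) and (A4) are assumed, (A1.2) follows from \eqref{eq:exp} since $g(r)/g(r+1)=e^{c}(1+1/r)^{\delta}\le e^{c}2^{\delta}$ for $r\ge 1$, and (A2) holds under \eqref{eq:weakscaling} as recorded before Corollary \ref{thm:polynomial}. Since the ground state $\varphi_0$ is a strictly positive eigenfunction, Theorem \ref{prop:lowerbound}(1) gives $\varphi_0(x)\ge K\nu(x)$ for $|x|>r+1$ with $K>0$; together with $\nu(x)\asymp g(|x|)=e^{-c|x|}|x|^{-\delta}$ on $\{|x|\ge 1\}$ this is part (1) (with $C_{31}=K\,C_{26}$). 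For part (2) the key point is that $\delta>(d+1)/2$ is exactly the condition under which the convolution inequality (A1.3) holds for $g(r)=e^{-cr}r^{-\delta}$; I would cite \cite[Prop. 2]{bib:KS14} for this (it can also be seen directly: a computation of $\int g(|x-y|)g(|y|)\,dy$ over a tubular neighbourhood of the segment $[0,x]$, where the exponential factor is $e^{-c|x|}$ and the transverse Gaussian integration together with the longitudinal integral produces precisely the factor $|x|^{-\delta}$ when $\delta>(d+1)/2$, shows the convolution is $\asymp g(|x|)$). The remaining conditions (A2)--(A3), \eqref{eq:comp_dens}, \eqref{eq:suf1} hold by Proposition \ref{prop:bound_by_psi} (with \eqref{eq:fsm} automatic for exponential tails), so Theorem \ref{thm:defic6} applies and, using $\nu\asymp g$, yields $|\varphi(x)|\le C_{32}\|\varphi\|_{\infty}e^{-c|x|}|x|^{-\delta}$ for $\lambda<-\eta_0$; if moreover $\lambda_0<-\eta_0$, pairing this with part (1) and $\|\varphi_0\|_{\infty}<\infty$ gives $\varphi_0(x)\asymp e^{-c|x|}|x|^{-\delta}$.

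\emph{Part (3), reduction.} The second entry of the maximum is part (1), so only the bound $\varphi_0(x)\ge C_{34}\,e^{-C_{33}\sqrt{|\lambda_0|+\varepsilon}\,|x|}$ has to be proved. Fix $\varepsilon>0$, set $\eta:=|\lambda_0|+\varepsilon$, take $\theta:=|\lambda_0|+\varepsilon/2$ and $r$ so large that $\sup_{|y|\ge r}|V(y)|\le\varepsilon/2$, and apply \eqref{eq:eig1} to $f=\varphi_0$ and $D=\overline B(0,r)^{c}$. Dropping the nonnegative first term, using that $X_t\in D$ for $t<\tau_D$ so that $e^{-\int_0^{\tau_D}(\theta+V(X_s))ds}\ge e^{-\eta\tau_D}$, and that $X_{\tau_D}\in\overline B(0,r)$ where $\varphi_0\ge c_r:=\inf_{\overline B(0,r)}\varphi_0>0$ (continuity and strict positivity of $\varphi_0$), I obtain $\varphi_0(x)\ge c_r\,\ex^x\big[e^{-\eta\tau_D}\big]$, with $\tau_D$ the hitting time of $\overline B(0,r)$. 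For any $n\in\N$, $\ex^x[e^{-\eta\tau_D}]\ge e^{-\eta t_{\rm b}n}\,\pr^x(\tau_D\le t_{\rm b}n)\ge e^{-\eta t_{\rm b}n}\,\pr^x(X_{t_{\rm b}n}\in B(0,r))$, so everything reduces to a lower bound on the law of the process at a large time and an argument of order $|x|$. Writing $X_{t_{\rm b}n}=x+\sum_{k=1}^{n}\xi_k$ with $\xi_k$ i.i.d.\ copies of $X_{t_{\rm b}}$ — which has a bounded density by (A2), mean zero by symmetry, positive definite covariance by \eqref{eq:fsm} and $\nu>0$, and finite exponential moments because $\nu$ is exponentially light — a classical local large-deviation estimate (Cram\'er tilting combined with a local limit theorem for the tilted walk) gives $\pr^x(X_{t_{\rm b}n}\in B(0,r))\ge c_1 n^{-d/2}e^{-nI(-x/n)}$, uniformly while $|x|/n$ stays in a fixed compact neighbourhood of the origin, with $I$ the rate function and $I(w)\le C|w|^2$ for small $|w|$.

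\emph{Part (3), optimisation.} Combining, $\varphi_0(x)\ge c_2\,n^{-d/2}e^{-\eta t_{\rm b}n-C|x|^2/n}$; minimising the exponent over integers $n$ of order $|x|$ or larger (the real optimum $n^{\ast}\asymp|x|\sqrt{C/(\eta t_{\rm b})}$ gives $|x|/n^{\ast}=\sqrt{\eta t_{\rm b}/C}$, which lies in the admissible compact neighbourhood provided $\eta\le\eta_{\ast}$ for a suitable $\eta_{\ast}=\eta_{\ast}(X)>0$) yields $\varphi_0(x)\ge c_3(\eta)\,|x|^{-d/2}e^{-2\sqrt{C t_{\rm b}}\,\sqrt{\eta}\,|x|}$ for $|x|\ge R$. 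Choosing $C_{33}$ to be any fixed constant larger than $2\sqrt{C t_{\rm b}}$, than $c/\sqrt{\eta_{\ast}}$, and than $c$ absorbs the polynomial prefactor and gives the claim when $\eta\le\eta_{\ast}$; when $\eta>\eta_{\ast}$ one has $C_{33}\sqrt{\eta}>c$, so $e^{-C_{33}\sqrt{\eta}|x|}$ decays strictly faster than $e^{-c|x|}|x|^{-\delta}$, and $\varphi_0(x)\ge C_{34}e^{-C_{33}\sqrt{\eta}|x|}$ then follows from part (1) for all $|x|$ large (the threshold being allowed to depend on $X,\lambda_0,\varepsilon$). Since $\eta=|\lambda_0|+\varepsilon$, this is part (3). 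The hypothesis that \eqref{eq:weakscaling} holds with $\gamma_1=\gamma_2$ is what places the process in the regime that is $\gamma$-stable-like at small scales and, through \eqref{eq:fsm}, Brownian-like at large scales, so that the Gaussian-type local estimate is available in the form used. The main obstacle is precisely this last ingredient: the lower bound $\pr^x(X_{t_{\rm b}n}\in B(0,r))\ge c_1 n^{-d/2}e^{-C|x|^2/n}$ in the moderate-deviation range where $n$ has order $|x|$, which requires the tilted process to be well defined (the exponential moments of $X_{t_{\rm b}}$, hence the exponential tails of $\nu$) and a local limit theorem for it with error control uniform over the relevant tilts — equivalently, the corresponding sharp global heat-kernel lower bound for this class of processes. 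Everything else (verification of (A1)--(A4), \eqref{eq:comp_dens}, \eqref{eq:suf1} for the profile \eqref{eq:exp}, and the manipulations of \eqref{eq:eig1}) is routine given the machinery already established.
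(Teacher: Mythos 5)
Your treatment of parts (1) and (2) coincides with the paper's: (1) is Theorem \ref{prop:lowerbound}(1) applied to the strictly positive ground state together with $\nu \asymp g$ on $\{|x|\geq 1\}$, and (2) is Theorem \ref{thm:defic6}, whose assumptions hold for the profile \eqref{eq:exp} once one observes (with \cite[Prop. 2]{bib:KS14}) that the convolution condition (A1.3) is equivalent to $\delta>(d+1)/2$. Those verifications are routine and correct.

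For part (3) you and the paper start identically, obtaining $\varphi_0(x)\ge c_r\,\ex^x\big[e^{-\eta\tau_{\overline B(0,r)^c}}\big]$ with $\eta=|\lambda_0|+\varepsilon$, but you then diverge. The paper bounds the Laplace transform from below via the $\eta$-capacitory measure identity \eqref{eq:capacitory}, i.e.\ $\ex^x[e^{-\eta\tau_{\overline B(0,r)^c}}]\ge\mu^{\eta}_{B(0,r)}(\overline B(0,r))\inf_{|y|\le r}G^{\eta}(x-y)$, and then controls the $\eta$-potential kernel $G^{\eta}$ by citing the sharp large-time heat kernel lower bound $p(t,x)\gtrsim t^{-d/2}e^{-c_2(|x|\wedge|x|^2/t)}$ of \cite[Th. 1.2, (1.14)]{bib:CKK}; the hypothesis $\gamma_1=\gamma_2$ is precisely what is needed to invoke that bound, as the remark after the Corollary notes. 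You replace both steps: the elementary inequality $\ex^x[e^{-\eta\tau}]\ge e^{-\eta t_{\rm b}n}\pr^x(X_{t_{\rm b}n}\in B(0,r))$ instead of the capacitory measure formula, and a from-scratch moderate-deviation lower bound on $\pr^x(X_{t_{\rm b}n}\in B(0,r))$ via Cram\'er tilting and a local limit theorem for the tilted walk instead of citing \cite{bib:CKK}. Your reduction inequality and the optimisation over $n$ are correct, and in the end give the same form $e^{-C_{33}\sqrt{\eta}|x|}$ with $C_{33}>c$. The elementary reduction is in fact a small simplification of the paper's argument (no capacitory measures needed).

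The genuine gap is the one you flag yourself: the uniform local limit theorem for the tilted random walk, with $n^{-d/2}$ prefactor and error control over tilts ranging in a compact subset of the natural parameter domain, is a substantial unproved ingredient. It is equivalent to the large-time heat kernel lower bound that \cite{bib:CKK} already provides for this class of processes, and establishing it from scratch is not simpler than the existing argument. As written your proof therefore has a real hole at the crucial step. The clean way to close it is to combine your elementary reduction $\ex^x[e^{-\eta\tau}]\ge e^{-\eta t_{\rm b}n}\pr^x(X_{t_{\rm b}n}\in B(0,r))$ with the CKK heat kernel lower bound directly (so $\pr^x(X_{t_{\rm b}n}\in B(0,r))\ge |B(0,r)|\inf_{|y|\le r}p(t_{\rm b}n,x-y)$) rather than attempting to reprove that estimate probabilistically.
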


\begin{proof}
The assumptions (A1.1)-(A1.2), (A2) and (A3) follow by similar arguments as above. Assumption (A1.3) holds if and
only if $\delta > (d+1)/2$ \cite[Prop. 2]{bib:KS14}. The lower bound in terms of $e^{-c|x|} |x|^{-\delta}$ in
(1)-(3) is a direct consequence of Theorem \ref{prop:lowerbound}. Similarly, the upper bound in (2) follows from
Theorem \ref{thm:defic6}. It suffices to show that if \eqref{eq:weakscaling} holds with $\gamma_1=\gamma_2$, then $\varphi_0(x) \geq C_{34} e^{- C_{33} \sqrt{|\lambda_0|
+\varepsilon} |x|}$, for sufficiently large $|x|$. Notice that we only need to consider the case $|\lambda_0| +
\varepsilon \in (0,1]$. By (1.14) in \cite[Th. 1.2]{bib:CKK}, for every $\eta \in (0,1]$ and $|x| \geq 1$ we have
$$
G^{\eta}(x) \geq \int_1^{\infty} e^{-\eta t} p(t,x) dt \geq c_1  \int_1^{\infty} e^{-\eta t} e^{-c_2 \left(|x|
\wedge \frac{|x|^2}{t} \right) }t^{-d/2} dt \geq c_1  e^{-c_2 \sqrt{\eta} |x|} \int_{|x|/\sqrt{\eta}}^{\infty}
 e^{-\eta t} t^{-d/2} dt,
$$
with some $c_1 >0$ and $c_2 \geq c$. Moreover,  $\int_{|x|/\sqrt{\eta}}^{\infty} e^{-\eta t} t^{-d/2} dt
\asymp e^{-\sqrt{\eta}|x|} |x|^{-d/2}$.
By the same argument as in the proof of Theorem \ref{thm:defic6} (with the converse inequalities) and using
\eqref{eq:capacitory}, for every $\varepsilon >0$ satisfying $|\lambda_0| + \varepsilon \in (0,1]$ there exist
$r>0$ and $c_3=c_3(X,\eta,r)$ such that
\begin{align*}
\varphi_0(x) & \geq \inf_{|z| \leq r} \varphi_0(z) \ex^x[e^{-(|\lambda_0| + \varepsilon) \tau_{\overline B(0,r)^c}}] \\
& \geq \inf_{|z| \leq r} \varphi_0(z) \mu^{|\lambda_0| + \varepsilon}_{B(0,r)}(\overline B(0,r)) \inf_{|y| \leq r}
G^{|\lambda_0| + \varepsilon}(x-y) \geq c_3 e^{-(1+c_2) \sqrt{|\lambda_0| + \varepsilon} |x|} |x|^{-d/2}, \quad |x| > r+1.
\end{align*}
\end{proof}
\noindent
Note that the condition $\gamma_1=\gamma_2$ in part (3) of the corollary is a technical assumption required only for an application of the large-time estimates of transition densities in \cite{bib:CKK}.

The above result covers a large family of L\'evy processes with jump intensities exponentially localized at infinity.
Important examples to this class are relativistic $\alpha$-stable like processes ($c=m^{1/\alpha}$, $\delta=(d+1+\alpha)/2$,
$\alpha \in (0,2)$) for which (A1.3) holds, and other processes with L\'evy measures with tails for which (A1.3) does not
hold, such as variance gamma like (that is, geometric $2$-stable like) processes ($c=1$, $\delta=(d+1)/2$), and some Lamperti-type
transformed stable processes ($c>0$, $\delta=d-1$).

The next result, taken together with Corollaries \ref{thm:polynomial}, \ref{thm:subexp} and \ref{prop:exp}, shows that
for a large subclass of jump-paring L\'evy processes assumptions \eqref{eq:intr_killing}-\eqref{eq:unif_bdd} give in fact a
necessary and sufficient condition for the result in Theorem \ref{thm:defic4}. In particular, for L\'evy intensities $\nu$
such that $\nu(x) \asymp e^{-c|x|^{\beta}} |x|^{-\delta}$ for $|x| \to
\infty$, with $c>0$, $\beta \in (0,1]$, $\delta \geq 0$,
conditions \eqref{eq:intr_killing}-\eqref{eq:unif_bdd} hold
if and only if $\beta \in (0,1)$, which is equivalent to the property that $\varphi_0 \asymp \nu$. Recall that when the
process is outside the jump-paring class (i.e., $\beta=1$ and $\delta \in [0,(d+1)/2]$ or $\beta>1$ in the case above), $\varphi_0$ decays slower than $\nu$.

\begin{proposition} \label{prop:sharp}
Let $\pro X$ be a L\'evy process with L\'evy-Khinchin exponent $\psi$ given by \eqref{eq:psi_ex}--\eqref{eq:nu_g}
such that \eqref{eq:weakscaling} and \eqref{eq:exp} hold. Whenever $\delta > (d+1)/2$, conditions
\eqref{eq:fsm}-\eqref{eq:comp_dens} and \eqref{eq:suf1} hold. However, there exist constants $C_{35}, C_{36}$ such
that for every $s_2 > 2s_1 \geq 2$ we have
$$
K^X_1(s_2) \geq C_{35} {s_2}^{(d+1)/2 - \delta} \quad \text{and} \quad K^X_2(s_1, s_2, \infty) \geq C_{36} e^{c s_1}.
$$
In particular, \eqref{eq:intr_killing}-\eqref{eq:unif_bdd} fail to hold for any $\kappa_1 \geq 2$ and $\kappa_2  < \infty$.
\end{proposition}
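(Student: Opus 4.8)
The plan is to settle the three assertions in turn; only the lower bound on $K^X_1$ requires any real work.

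For the first assertion, I would argue as follows. Under $\delta>(d+1)/2$ the jump-paring property (A1.3) holds by \cite[Prop.~2]{bib:KS14}, hence so does (A1.2), and since (A1.1) is already part of \eqref{eq:nu_g}, the L\'evy density obeys (A1). Condition \eqref{eq:fsm} is immediate: near the origin it follows from the estimate $g(r)\leq c_1 r^{-d-\gamma_2}$ on $(0,1]$ contained in \eqref{eq:weakscaling} together with $\gamma_2<2$, and away from the origin from the exponential decay in \eqref{eq:exp}. Since $A=a\,\Id$ by \eqref{eq:psi_ex}, Proposition~\ref{prop:bound_by_psi} applies and delivers \eqref{eq:comp_dens} and \eqref{eq:suf1}.

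The lower bound on $K^X_2$ I would obtain by testing the defining infimum on one carefully chosen pair. Fix $s_2>2s_1\geq 2$, take $x=(s_2,0,\dots,0)$ and $y=(s_1,0,\dots,0)$, so that $|y|=s_1$, $|x|=s_2$ and $|x-y|=s_2-s_1\geq 1$; then \eqref{eq:nu_g} and \eqref{eq:exp} give
$$
\frac{\nu(x-y)}{\nu(x)}\ \geq\ \frac{C_{26}}{C_{27}}\,\frac{g(s_2-s_1)}{g(s_2)}\ =\ \frac{C_{26}}{C_{27}}\,e^{c s_1}\left(\frac{s_2}{s_2-s_1}\right)^{\delta}\ \geq\ \frac{C_{26}}{C_{27}}\,e^{c s_1},
$$
so that $K^X_2(s_1,s_2,\infty)\geq C_{36}\,e^{c s_1}$ with $C_{36}=C_{26}/C_{27}$.

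The heart of the statement, and the step I expect to be delicate, is the lower bound on $K^X_1$. The mechanism is that for an exponentially decaying $\nu$ the double-jump integral in \eqref{def:K1} is \emph{not} controlled by $\nu(x)$: a whole transverse tube around the segment from $0$ to $x$ contributes on the common exponential scale $e^{-c|x|}$, and this tube carries a polynomial volume factor $\asymp|x|^{(d+1)/2}$ that overrides the single-jump normalisation. Concretely, for fixed $s_2\geq 2$ I would take $x=(4s_2,0,\dots,0)$ and bound the integral from below by its restriction to the slab
$$
\mathcal{R}=\left\{\,y=(y_1,y')\in\R\times\R^{d-1}:\ |y_1-2s_2|\leq \tfrac{s_2}{2},\ |y'|\leq\sqrt{s_2}\,\right\}.
$$
On $\mathcal{R}$ one checks directly that $|y|>s_2$ and $|x-y|>s_2$, that $|y|\vee|x-y|\leq 3s_2$, and — using $\sqrt{1+u}\leq 1+u/2$ — that $|y|+|x-y|\leq 4s_2+1$; hence by \eqref{eq:nu_g} and \eqref{eq:exp},
$$
\nu(x-y)\,\nu(y)\ \geq\ c_2\,s_2^{-2\delta}\,e^{-4c s_2}\ \ \text{on}\ \mathcal{R},\qquad\text{while}\qquad\nu(x)\ \leq\ c_3\,s_2^{-\delta}\,e^{-4c s_2}.
$$
Since $|\mathcal{R}|\asymp s_2\cdot s_2^{(d-1)/2}=s_2^{(d+1)/2}$, integrating the first estimate over $\mathcal{R}$ and dividing by $\nu(x)$ yields $K^X_1(s_2)\geq C_{35}\,s_2^{(d+1)/2-\delta}$. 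The only bookkeeping here is verifying the three inclusions/inequalities describing the good behaviour on $\mathcal{R}$ together with the volume scaling; everything else is transparent. Finally, the two lower bounds give the last assertion at once: for any fixed $\kappa_1\geq 2$ we have $K^X_1(\kappa_1 s)\,K^X_2(s,\kappa_1 s,\infty)\geq C_{35}C_{36}\,(\kappa_1 s)^{(d+1)/2-\delta}\,e^{c s}$, which tends to $\infty$ as $s\to\infty$, so \eqref{eq:intr_killing} cannot hold; and for any fixed $s_1\geq 1$, $\limsup_{s\to\infty}K^X_2(s_1,s,\infty)\geq C_{36}e^{c s_1}$, which is unbounded in $s_1$, so no finite $\kappa_2$ can satisfy \eqref{eq:unif_bdd}.
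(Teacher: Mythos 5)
Your proposal is correct, and it differs from the paper's proof in one genuine way: for the lower bound on $K^X_1$ the paper simply invokes the estimates of \cite[Prop.~2, p.~22]{bib:KS14}, whereas you give a self-contained geometric argument by restricting the double-jump integral to a parabolic slab around the midpoint of the segment from $0$ to $x=(4s_2,0,\dots,0)$, of length $\asymp s_2$ along the jump axis and transverse radius $\asymp\sqrt{s_2}$. This is the right tube to pick: on it the total flight $|y|+|x-y|$ stays within $O(1)$ of $|x|$, so the exponential factors cancel against $\nu(x)$ and the polynomial volume $\asymp s_2^{(d+1)/2}$ survives, which is exactly the source of the exponent $(d+1)/2-\delta$. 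Your computation of the slab inclusions, the exponent cancellation via $\sqrt{1+u}\le 1+u/2$, and the volume scaling are all correct. What this buys is transparency: it makes visible the mechanism that the paper's citation hides, namely that the $(d+1)/2$ comes from the transverse Gaussian-type width of the tube where two exponentially decaying intensities can simultaneously be within a constant of their common value $e^{-c|x|}$.

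The remaining parts track the paper closely: the $K^X_2$ lower bound is the same test-pair computation along the first coordinate axis; the first assertion (that $\delta>(d+1)/2$ gives (A1.3), \eqref{eq:fsm}, and then \eqref{eq:comp_dens}, \eqref{eq:suf1} via Proposition~\ref{prop:bound_by_psi}) is exactly what the paper does; and the final deduction that \eqref{eq:intr_killing}--\eqref{eq:unif_bdd} fail is the obvious one, which the paper leaves implicit. One small bookkeeping point: when you plug $s_2=\kappa_1 s$, $s_1=s$ into the displayed lower bounds for the case $\kappa_1=2$ you are at the boundary $s_2=2s_1$ rather than strictly inside $s_2>2s_1$; this is harmless since $K^X_2(\cdot,s_2,\infty)$ is non-decreasing in the first argument, so you can replace $s_1=s$ by $s_1=s-1$ at the cost of a constant, but it is worth noting if you want every inequality to be literal.
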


\begin{proof}
Note that \eqref{eq:exp} yields \eqref{eq:fsm}, while \eqref{eq:comp_dens}, \eqref{eq:suf1} are consequences of Proposition
\ref{prop:bound_by_psi}. The first bound on $K^X_1$ follows by the estimates in \cite[Prop. 2, p. 22]{bib:KS14}. To
prove the second bound, let $x_{s_2}=(s_2,0,...,0)$, $y_{s_1}=(s_1,0,...,0)$ and observe that
$$
g(|x_{s_2}-y_{s_1}|) = g(|x_{s_2}|) \frac{g(|x_{s_2}-y_{s_1}|)}{g(|x_{s_2}|)} \geq  e^{c s_1} \left(\frac{s_2}{s_2-s_1}
\right)^{\delta} g(|x_{s_2}|) \geq e^{c s_1} g(|x_{s_2}|),
$$
which gives
$$
K^X_2(s_1,s_2,\infty) \geq \frac{\nu(x_{s_2}-y_{s_1})}{\nu(x_{s_2})}\geq c_1 e^{c s_1}, \quad s_2  > 2s_1 \geq 2.
$$
In particular, it is directly seen that none of the conditions \eqref{eq:intr_killing} and \eqref{eq:unif_bdd} can hold
for any $\kappa_1 \geq 2$ and $\kappa_2  < \infty$.
\end{proof}

As seen from Theorem \ref{thm:defic6} and Corollary \ref{prop:exp}, a sufficiently low-lying ground state eigenvalue
$\lambda_0<0$ secures good decay properties of the corresponding eigenfunction. An important application of this
type of spectral information is in mathematical physics in studies of
the stability of quantum systems with nuclear repulsion. For the case of one-electron quasi-relativistic atoms with Coulomb interaction see \cite{bib:DL}.

By using the following simple and intuitively clear criterion, we can easily settle when is $\lambda_0<0$ small enough
in comparison with the magnitude of the potential. Denote by $\mu_r  >0$ the principal eigenvalue of the semigroup
of the process $(X_t)_{t \geq 0}$ killed on exiting the ball $B(0,r)$, $r>0$.

\begin{proposition}
\label{smallev}
Let $\pro X$ be a L\'evy process with L\'evy-Khinchin exponent $\psi$ given by \eqref{eq:Lchexp}-\eqref{eq:nuinf}
and let Assumption (A4) hold. Suppose that a ground state at the eigenvalue $\lambda_0 < 0$ exists. Then for every $r >0$ we have
$$
\lambda_0  \leq \sup_{|y| \leq 2r} V_+(y) - \inf_{|y| \leq 2r} V_-(y) + \mu_r.
$$
\end{proposition}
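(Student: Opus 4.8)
The plan is to bound $\lambda_0=\inf\Spec H$ from above by a Rayleigh-type estimate: test the Feynman--Kac semigroup $T_t=e^{-tH}$ against the principal eigenfunction of the process \emph{confined} to $B(0,r)$. Write $\Lambda_\rho:=\sup_{|y|\le\rho}V_+(y)-\inf_{|y|\le\rho}V_-(y)$; since $\sup_{|y|\le\rho}V_+$ is non-decreasing in $\rho$ and $\inf_{|y|\le\rho}V_-$ is non-increasing, $\Lambda_\rho$ is non-decreasing, so it suffices to prove $\lambda_0\le\Lambda_r+\mu_r$ (if $\Lambda_r=\infty$ there is nothing to show). The starting observation is that, $H$ being self-adjoint and bounded below with $\inf\Spec H=\lambda_0$, functional calculus gives $\|e^{-tH}\|=e^{-t\lambda_0}$, hence $\langle f,T_tf\rangle\le e^{-t\lambda_0}\|f\|_2^2$ for every $f\in L^2(\R^d)$ and $t>0$.

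For the test function I would take $\psi_r\ge 0$ with $\|\psi_r\|_2=1$, the principal eigenfunction of the semigroup $P_t^{B(0,r)}g(x)=\ex^x[\tau_{B(0,r)}>t;\,g(X_t)]$ of $\pro X$ killed on exiting $B(0,r)$ (the object $\mu_r$ in the statement is exactly its principal eigenvalue; existence and non-negativity follow from compactness of the killed semigroup, a fact already used elsewhere in the paper). Thus $\psi_r$ vanishes outside $B(0,r)$ and $P_t^{B(0,r)}\psi_r=e^{-t\mu_r}\psi_r$. The heart of the argument is a pathwise Feynman--Kac lower bound: for $x\in B(0,r)$,
\[
T_t\psi_r(x)=\ex^x\!\Big[e^{-\int_0^t V(X_s)\,ds}\,\psi_r(X_t)\Big]\ \ge\ \ex^x\!\Big[\tau_{B(0,r)}>t;\ e^{-\int_0^t V(X_s)\,ds}\,\psi_r(X_t)\Big],
\]
and on the event $\{\tau_{B(0,r)}>t\}$ the path lies in $B(0,r)$ up to time $t$, so $\int_0^t V(X_s)\,ds=\int_0^t(V_+-V_-)(X_s)\,ds\le t\,\Lambda_r$. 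Hence the right-hand side is at least $e^{-t\Lambda_r}P_t^{B(0,r)}\psi_r(x)=e^{-t(\Lambda_r+\mu_r)}\psi_r(x)$; integrating against $\psi_r$ and using $\|\psi_r\|_2=1$ yields $\langle\psi_r,T_t\psi_r\rangle\ge e^{-t(\Lambda_r+\mu_r)}$. Comparing with the spectral bound gives $e^{-t(\Lambda_r+\mu_r)}\le e^{-t\lambda_0}$ for all $t>0$, and taking logarithms and letting $t$ drop out yields $\lambda_0\le\Lambda_r+\mu_r\le\Lambda_{2r}+\mu_r$, as claimed.

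I do not expect a genuine obstacle. The only point needing a word of justification is the existence and basic properties of $\psi_r$, which is standard. The conceptual crux is the restriction to $\{\tau_{B(0,r)}>t\}$ in the Feynman--Kac estimate: only there is the crude bound $\int_0^t V(X_s)\,ds\le t\Lambda_r$ available, which is precisely why one tests against the $B(0,r)$-eigenfunction rather than against $\varphi_0$ itself, and the factor $2$ in $B(0,2r)$ is merely harmless slack coming from the monotonicity of $\Lambda_\rho$. Equivalently, one could run the argument in variational form: $\lambda_0\le\cE(\psi_r,\psi_r)+\int_{\R^d}V\psi_r^2=\mu_r+\int_{B(0,r)}V\psi_r^2\le\mu_r+\Lambda_r$, using that $\cE(\psi_r,\psi_r)=\mu_r$ (the Dirichlet form of the part process agrees with $\cE$ on functions supported in $B(0,r)$) and that any $X$-Kato class $V$ is infinitesimally form bounded with respect to $-L$, so $\psi_r$ lies in the form domain of $H$.
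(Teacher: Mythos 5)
Your proof is correct, and it takes a cleaner route than the paper's. Both arguments rest on the same spectral fact $\|T_t\|_{2,2}=e^{-\lambda_0 t}$ and the same pathwise bound $\int_0^t V(X_s)\,ds\le t\Lambda$ for paths confined to a ball, but the paper tests $T_t$ against the normalized indicator $\sqrt{c_1}\,\1_{B(0,2r)}$ and restricts to the event $\{\tau_{B(x,r)}>t\}$ centred at the starting point $x\in B(0,r)$. This forces the paper to invoke the survival-probability estimate $\pr^0(\tau_{B(0,r)}>t)\ge c_2\,e^{-\mu_r t}$ (the $\lambda_r$ in the published proof is a typo for $\mu_r$) and to dispose of the remaining $r$-dependent constant by dividing by $t$ and letting $t\to\infty$. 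By testing against the principal eigenfunction $\psi_r$ of the killed semigroup instead, you make the computation exact — the eigenvalue relation $P^{B(0,r)}_t\psi_r=e^{-t\mu_r}\psi_r$ absorbs the ball-exit estimate and the constants cancel, so you obtain the inequality for every $t>0$ without a limit, and in fact the sharper bound $\lambda_0\le\Lambda_r+\mu_r$ with $r$ rather than $2r$. Your closing variational reformulation $\lambda_0\le\cE(\psi_r,\psi_r)+\int V\psi_r^2$ is arguably the most transparent version of the argument and is consistent with the paper.

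The only point to be careful about, which both proofs share, is the implicit assumption that the principal Dirichlet eigenvalue $\mu_r$ genuinely is an eigenvalue of the killed semigroup (so that an $L^2$-normalized nonnegative $\psi_r$ exists, and so that the survival probability indeed satisfies the two-sided exponential estimate). The proposition's standing hypotheses \eqref{eq:Lchexp}--\eqref{eq:nuinf} and (A4) do not by themselves give compactness of $P^{B(0,r)}_t$ — that would come from (A2), i.e., $p(t_{\mathrm{b}},0)<\infty$, which yields Hilbert--Schmidt bounds $\|p_{B(0,r)}(t,\cdot,\cdot)\|^2_{L^2}\le |B(0,r)|\,p(2t,0)$. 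Since the paper introduces $\mu_r$ as ``the principal eigenvalue'' without further comment, this is implicitly assumed in both arguments and not a defect specific to yours; you flagged the issue appropriately.
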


\begin{proof}
For every $r>0$ denote $c_1=c_1(r):=1/|B(0,2r)|$. We have for all $t>0$ that
\begin{align*}
c_1 \int_{B(0,r)} \left(\ex^x\left[\tau_{B(x,r)} >t; e^{-\int_0^t V(X_s)ds} \right] \right)^2 dx & \leq
\int_{\R^d} \left(\ex^x\left[e^{-\int_0^t V(X_s)ds} \, \sqrt{c_1} \1_{B(0,2r)}(X_t) \right] \right)^2 dx \\
& \leq \left\|T_t\right\|^2_{2,2} = e^{-2\lambda_0 t}.
\end{align*}
For the paths starting at $x \in B(0,r)$ and satisfying $\tau_{B(x,r)} >t$ we have
$$
\int_0^t V(X_s)ds \leq t\left(\sup_{|y| \leq 2r} V_+(y) - \inf_{|y| \leq 2r} V_-(y)\right).
$$
Hence we get
$$
c_1 |B(0,r)| (\pr^0(\tau_{B(0,r)} >t))^2 e^{-2t(\sup_{|y| \leq 2r} V_+(y) - \inf_{|y| \leq 2r} V_-(y))} \leq
e^{-2\lambda_0 t}, \quad t>0, \; r>0.
$$
Since $\pr^0(\tau_{B(0,r)} >t) \geq c_2 e^{-\lambda_r t}$ for all $t>0$ with the same constant $c_2=c_2(r)$, the
above estimate gives
$$
2\lambda_0 t \leq 2t\left(\sup_{|y| \leq 2r} V_+(y) - \inf_{|y| \leq 2r} V_-(y)\right) + 2 \lambda_r t -
\log \left[c_1 c_2^2 |B(0,r)|\right], \quad t>0, \; r>0.
$$
Dividing on both sides by $2t$ and taking the limit $t \to \infty$ the claim follows.
\end{proof}

\begin{corollary}  [\textbf{Super-exponentially decaying L\'evy intensities}]
\label{prop:supexp}
\noindent
Let $\pro X$ be a L\'evy process with L\'evy-Khinchin exponent $\psi$ given by \eqref{eq:psi_ex}--\eqref{eq:nu_g} with $a=0$.
Moreover, let \eqref{eq:weakscaling} with $\gamma_1=\gamma_2$ be satisfied and assume
\begin{align} \label{eq:supexp}
g(r) = e^{-c r^{\beta}} r^{-\delta}, \quad r \geq 1, \quad \text{with} \ \ c>0, \ \ \beta > 1 \ \  \text{and} \ \ \delta \geq 0.
\end{align}
If assumption (A4) holds and a ground state at eigenvalue $\lambda_0 < 0$ exists, then there is a constant
$C_{37}    > 0$ such that for every $\varepsilon >0$ there exist $C_{38} >0$ and $R>0$ for which
$$
\varphi_0(x) \geq C_{38} e^{-C_{37} \sqrt{(|\lambda_0|+\varepsilon) \wedge 1} \, |x| (\log|x|)^{(\beta-1)/\beta}} ,
\quad |x| \geq R.
$$
\end{corollary}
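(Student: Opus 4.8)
The plan is to follow closely the proof of Corollary \ref{prop:exp}(3), simply feeding in the large-time heat kernel bound appropriate for super-exponentially light L\'evy intensities instead of the one for the exponential case. Fix $\varepsilon>0$ and set $\eta:=|\lambda_0|+\varepsilon$. First I would reduce the lower bound on $\varphi_0$ to a lower bound on a Laplace transform of a hitting time, exactly as in the proofs of Theorem \ref{thm:defic6} and Corollary \ref{prop:exp}(3): apply the resolvent representation \eqref{eq:eig1} with $D=\overline B(0,r)^c$, where $r=r(V,\varepsilon)$ is large enough that $|V|\le\varepsilon$ on $\overline B(0,r)^c$; drop the nonnegative first term; bound $\int_0^{\tau_D}(\theta+V(X_s))\,ds\le(\theta+\varepsilon)\tau_D$; and let $\theta\downarrow|\lambda_0|$. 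This gives $\varphi_0(x)\ge\big(\inf_{|z|\le r}\varphi_0(z)\big)\,\ex^x\big[e^{-\eta\tau_{\overline B(0,r)^c}}\big]$ for $|x|>r$, with the infimum strictly positive by the strict positivity and continuity of $\varphi_0$. Next, by the capacitory measure identity \eqref{eq:capacitory}, $\ex^x[e^{-\eta\tau_{\overline B(0,r)^c}}]\ge\mu^\eta_{B(0,r)}(\overline B(0,r))\,\inf_{|y|\le r}G^\eta(x-y)$, so since $|x-y|\in[|x|-r,|x|+r]$ it remains to bound $G^\eta(w)=\int_0^\infty e^{-\eta t}p(t,w)\,dt$ from below for $|w|$ large. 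The standing hypotheses (A1.1)--(A1.2) hold by \eqref{eq:supexp}, (A1.3) is not needed here (and in fact fails for $\beta>1$), and (A2)--(A3) follow from \eqref{eq:weakscaling} as in Proposition \ref{prop:bound_by_psi}. Note that, unlike the polynomial and sub-exponential cases, the elementary bound $\varphi_0\gtrsim\nu$ from Theorem \ref{prop:lowerbound} is far too weak to be of any use here.

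The heart of the matter is a lower bound on $G^\eta$. Under \eqref{eq:weakscaling} with $\gamma_1=\gamma_2$ and \eqref{eq:supexp} the process is $\gamma$-stable-like near the origin with super-exponentially light jumps, and the large-time estimates of \cite{bib:CKK} provide, for suitable $c_1,c_2,t_0>0$ and all $|x|$ large and $t_0\le t\le|x|$, a bound of the form $p(t,x)\ge c_1\,t^{-d/2}\exp\!\big(-c_2\,|x|\,(\log(e+|x|/t))^{(\beta-1)/\beta}\big)$; this is the step for which $\gamma_1=\gamma_2$ is imposed, just as in Corollary \ref{prop:exp}. Integrating over a window $t\in[T,2T]$, $G^\eta(x)$ is bounded below by $T^{1-d/2}$ times $\exp(-2\eta T - c_2|x|(\log(e+|x|/2T))^{(\beta-1)/\beta})$ up to a universal constant. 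When $\eta\le1$ I would choose $T=T(|x|):=|x|^{1-\eta^{\beta/(2(\beta-1))}}$, so that $\log(|x|/T)=\eta^{\beta/(2(\beta-1))}\log|x|$ and hence the second term in the exponent is $c_2\sqrt{\eta}\,|x|(\log|x|)^{(\beta-1)/\beta}$; since $1-\eta^{\beta/(2(\beta-1))}\in(0,1)$ we have $t_0\le T\le 2T\le|x|$ for $|x|$ large, and $2\eta T\le 2\eta|x|\le 2\sqrt{\eta}\,|x|(\log|x|)^{(\beta-1)/\beta}$, so after absorbing $T^{1-d/2}$ into the exponential (a $\log|x|$ correction, harmless since $(\log|x|)^{(\beta-1)/\beta}\ge1$) one gets $G^\eta(x)\ge e^{-(c_2+3)\sqrt{\eta}\,|x|(\log|x|)^{(\beta-1)/\beta}}$ for $|x|\ge R(\eta)$. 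When $\eta>1$ I would instead take the fixed power $T=\sqrt{|x|}$, for which $\log(|x|/T)\asymp\tfrac12\log|x|$ and $\eta T=\eta\sqrt{|x|}=o\big(|x|(\log|x|)^{(\beta-1)/\beta}\big)$, yielding $G^\eta(x)\ge e^{-3c_2|x|(\log|x|)^{(\beta-1)/\beta}}$ — the bound corresponding to $(|\lambda_0|+\varepsilon)\wedge1=1$. Putting the two cases together, $G^\eta(x)\ge e^{-C_{37}\sqrt{\eta\wedge1}\,|x|(\log|x|)^{(\beta-1)/\beta}}$ for $|x|\ge R$ with $C_{37}$ a fixed multiple of $c_2$ depending only on $\beta$ and $d$, in particular not on $\varepsilon$. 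Feeding this back through the capacitory-measure step and the resolvent reduction, and using $(\log(2|x|))^{(\beta-1)/\beta}\le(1+o(1))(\log|x|)^{(\beta-1)/\beta}$, produces the claimed bound with a suitable $C_{38}=C_{38}(X,\lambda_0,\varepsilon)$ and $R=R(X,\lambda_0,\varepsilon)$.

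The main obstacle is the heat kernel input: one has to make sure the large-time lower bound of \cite{bib:CKK} genuinely applies in the range $t_0\le t\le|x|$ for these mixed stable-like/super-exponential processes and has exactly the $(\log(e+|x|/t))^{(\beta-1)/\beta}$ form — this is precisely the role of the technical requirement $\gamma_1=\gamma_2$, as remarked after Corollary \ref{prop:exp}. The second, more delicate, point is the choice of the integration window $T=|x|^{1-\eta^{\beta/(2(\beta-1))}}$: this is the device that converts the heat-kernel exponent into $\sqrt{\eta}\,|x|(\log|x|)^{(\beta-1)/\beta}$, and it only works while the exponent $\eta^{\beta/(2(\beta-1))}$ stays below $1$, which is exactly what forces the $\eta\wedge1$ truncation (the case $\eta>1$ then being handled by the crude fixed choice $T=\sqrt{|x|}$). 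Everything else — the reduction via \eqref{eq:eig1}, the capacitory-measure step \eqref{eq:capacitory}, verifying (A1)--(A3) through Proposition \ref{prop:bound_by_psi}, and the absorption of polynomial prefactors into $C_{37}$ for $|x|\ge R$ — is routine and parallels the proof of Corollary \ref{prop:exp}.
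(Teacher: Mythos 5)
Your proposal is correct and follows essentially the same route as the paper's proof: reduce via \eqref{eq:eig1} and the capacitory-measure identity \eqref{eq:capacitory} to a lower bound on $G^{\eta}$, then extract the $\sqrt{\eta}\,|x|(\log|x|)^{(\beta-1)/\beta}$ rate by integrating the large-time heat-kernel lower bound of \cite[Th.~1.2, (1.17)]{bib:CKK} over a carefully chosen $t$-window (the paper integrates over $[\,|x|/(\sqrt{\eta}(\log|x|)^{\beta/(\beta-1)}),\infty)$ for $\eta\le1$ and $[1,|x|]$ otherwise, whereas you use $[T,2T]$ with $T=|x|^{1-\eta^{\beta/(2(\beta-1))}}$ resp.\ $T=\sqrt{|x|}$ — a different but equally valid balancing). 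One small slip in your commentary: (A1.2) does \emph{not} hold for $\beta>1$ (indeed $g(r)/g(r+1)\to\infty$), which is precisely why this process sits outside the jump-paring class and Theorem~\ref{prop:lowerbound}(2) forces $\varphi_0$ to decay strictly slower than $\nu$ — but since neither (A1.2) nor (A1.3) is used in your reduction, this does not affect the argument.
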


\begin{proof}
By the lower estimate in (1.17) of \cite[Th. 1.2]{bib:CKK}, for sufficiently large $|x|$
$$
G^{\eta}(x) \geq \int_1^{|x|} e^{-\eta t} p(t,x) dt \geq c_1  e^{-c_2\left(|x| (\log|x|)^{\frac{\beta-1}{\beta}}\right)}
\int_1^{|x|} e^{-\eta t} t^{-d/2} dt, \quad \eta>0,
$$
and
$$
G^{\eta}(x) \geq \int_{\frac{|x|}{\sqrt{\eta} (\log|x|)^{\beta/(\beta-1)}}}^{\infty} e^{-\eta t} p(t,x) dt
\geq c_1 e^{-c_2 \sqrt{\eta} \, \left(|x| (\log|x|)^{\frac{\beta-1}{\beta}}\right)}
\int_{\frac{|x|}{\sqrt{\eta} (\log|x|)^{\beta/(\beta-1)}}}^{\infty} e^{-\eta t} t^{-d/2} dt,
$$
whenever $\eta \in (0,1]$. With this, the result follows by a similar argument as in Corollary \ref{prop:exp}(3).
\end{proof}

Before closing this subsection we make a point about the mechanism behind the phase transition.
\begin{remark} [\textbf{Phase transition in the decay rates}]
\label{rem:exp_trans}
{\rm
Denote by $\nu_1(dx) = \nu(x) \1_{\left\{|x| \geq 1\right\}} dx$ the L\'evy measure of the compound Poisson process
part, which we denote by $\pro {\overline {X}}$, in the L\'evy-It\^o decomposition of $\pro X$. Recall that the
subsequent jumps $J_1, J_2, J_3,...$ of $\pro {\overline{X}}$ are independent, identically distributed random variables
on a  common probability space $(\overline \Omega, \overline \pr)$, distributed by the probability measure $\overline
\nu_1(dx) = \nu_1(dx)/\nu_1(\R^d)$. Observe that for a symmetric jump-paring process $\pro X$ we have by (A1.3) that
\begin{align} \label{eq:jump_distr}
\overline \pr(|J_1 + J_2| > r) \leq c_1 \overline \pr(|J_1| > r),
\quad \text{for large \ $r>0$,}
\end{align}
with some $c_1 > 0$. Obviously, the smallness conditions \eqref{eq:intr_killing}-\eqref{eq:unif_bdd} are actually
governed by the jump properties of $\pro {\overline {X}}$. On the level of the specific L\'evy measures with ``nice"
enough profiles discussed in detail in the previous two subsections we see that the transition in the ground state
fall-off rates occurs exactly when the decay rate of $\nu$ changes in leading order from strictly sub-exponential to
exponential. In particular, the fall-off is driven by $\nu$ as long as
\begin{align} \label{eq:linear}
\lim_{|x| \to \infty} \frac{\log \nu(x)}{|x|} = \lim_{|x| \to \infty} \frac{\log \overline \nu_1(x)}{|x|} = 0.
\end{align}
One can conjecture that this qualitative change in the decay rates is strongly related to a change in the mechanism
which makes the long jumps of the process occur. For simplicity, consider just the $d=1$ case. We can make the
following comparison with the concept of subexponential distributions in probability \cite{bib:EGV}. It can be
checked that for all the specific profiles discussed in Subsection \ref{subsec:conseq} the measures $\overline
\nu_1(dx)$ are cases of subexponential distributions, i.e., $\overline \pr(|J_1 + J_2| > r) \approx 2 \overline
\pr(|J_1| > r)$ as $r \to \infty$. Taken along with the general property $\overline \pr(|J_1| \vee |J_2| > r) \approx 2
\overline \pr(|J_1| > r)$, this implies $\overline \pr(|J_1 + J_2| > r) \approx \overline \pr(|J_1| \vee |J_2| > r)$ as
$r \to \infty$, which means that a double jump $J_1+J_2$ is larger than a large $r$ when either $J_1$ or $J_2$ is
larger than $r$. It is much less likely that both $J_1, J_2$ are less than $r$, but they are large enough
so that their sum exceeds $r$. This property extends to arbitrarily long sequences of jumps. When $\pro X$ is a
jump-paring process but $\overline \nu_1(dx)$ is not subexponential, this is no longer true. From \eqref{eq:jump_distr}
we see that the ratio $\overline \pr(|J_1 + J_2| > r)/\overline \pr(|J_1| > r)$ is bounded above for large $r$, but we only
have $\liminf_{r \to \infty} \overline \pr(|J_1 + J_2| > r)/\overline \pr(|J_1| > r) > 2$. This gives
$\liminf_{r \to \infty} \overline \pr(|J_1 + J_2| > r)/ \overline \pr(|J_1| \vee |J_2| > r) > 1$ which means that the
above described mechanism weakens. This can be seen as an increase of the capacity of the process to fluctuate through
multiple smaller jumps rather than exceedingly large single jumps, which improves as the tail of $\overline \nu_1(dx)$,
and thus of $\nu$, gets lighter.
}
\end{remark}

\subsection{Upper bound for confining potentials}

Finally we show that by using Lemma \ref{lem:defic1} one of our recent results on eigenfunction estimates in \cite{bib:KL14}
can be recovered. In this brief subsection we consider \emph{confining potentials} in the sense of the following condition:

\medskip
\begin{itemize}
\item[\textbf{(A5)}]
Let $V \in \cK_{\pm}^X$ be such that $V(x) \to \infty$ as $|x| \to \infty$.
\end{itemize}

\medskip
\noindent
Note that under Assumption (A5) the operators $T_t:L^2(\R^d) \to L^2(\R^d)$, $t  >0$, are compact, which follows by standard
arguments based on approximation of $T_{t}$, $t \geq 2t_{\rm b}$, in terms of compact operators \cite[Lem. 3.2]{bib:KL}. Clearly,
compactness extends to all $t>0$ by the spectral theorem. The theory of operator semigroups implies then that there
exists an orthonormal basis in $L^2(\R^d)$ consisting of the eigenfunctions $\varphi_n$ given by $T_t \varphi_n = e^{-\lambda_n t}
\varphi_n$, $t>0$, $n \geq 0$, and the spectrum of $H$ consists only of eigenvalues $\lambda_0 < \lambda_1 \leq \lambda_2 \leq ... \to
\infty$ of finite multiplicity each. All $\varphi_n$ are bounded continuous functions, and the ground state $\varphi_0$ has a
strictly positive version.

The following result has been obtained first in \cite[Th. 2.1]{bib:KL14}. For the sake of completeness, we show that in our present
framework it can be obtained without jump estimates.

\begin{theorem}
\label{thm:defic_conf}
Let $\pro X$ be a L\'evy process with L\'evy-Khinchin exponent $\psi$ given by \eqref{eq:Lchexp}-\eqref{eq:nuinf}
such that Assumptions (A1)-(A3) and (A5) hold. Suppose that $\varphi$ is an eigenfunction corresponding to the
eigenvalue $\lambda \in \R$. Then there exist $C_{39}=C_{39}(X,V) >0$ and $R=R(X,V) > 0$ such that
$$
|\varphi(x)| \leq C_{39} \left\|\varphi\right\|_{\infty} \, \nu(x), \quad |x| > R.
$$
\end{theorem}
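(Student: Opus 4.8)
The plan is to reduce the confining case to the same harmonic-function estimate, Lemma \ref{lem:defic1}, that drives all the decaying-potential results, via the resolvent representation \eqref{eq:eig1}. Fix an eigenfunction $\varphi$ at eigenvalue $\lambda\in\R$, normalized so $\|\varphi\|_2=1$; recall that $\varphi\in C_{\rm b}(\R^d)$ by the smoothing properties of the semigroup. Since $V(x)\to\infty$, pick $r>0$ so large that $V(y)\geq |\lambda|+1$ for $|y|>r$. Choose $\theta$ with $\theta+\lambda>0$. Apply \eqref{eq:eig1} with $D=\overline B(0,r)^c$: for $|x|>r$,
\begin{align*}
|\varphi(x)| \leq (\theta+\lambda)\,\ex^x\!\left[\int_0^{\tau_r} e^{-\int_0^t(\theta+V(X_s))ds}|\varphi(X_t)|\,dt\right] + \ex^x\!\left[\tau_r<\infty;\, e^{-\int_0^{\tau_r}(\theta+V(X_s))ds}|\varphi(X_{\tau_r})|\right],
\end{align*}
with $\tau_r:=\tau_{\overline B(0,r)^c}$. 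On $\{t<\tau_r\}$ the path sits in $\overline B(0,r)^c$, so $\theta+V(X_s)\geq \theta+|\lambda|+1$ along the path; bounding $|\varphi|\leq\|\varphi\|_\infty$ in both terms and integrating out $t$, the first term is at most $\|\varphi\|_\infty(\theta+\lambda)/(\theta+|\lambda|+1)\leq\|\varphi\|_\infty$, and the second is at most $\|\varphi\|_\infty\,\ex^x[e^{-(\theta+|\lambda|+1)\tau_r}]$. Letting $\theta$ be fixed (or taking an appropriate limit as in the proof of Theorem \ref{thm:defic6}) one arrives at a bound of the form $|\varphi(x)|\leq c\,\|\varphi\|_\infty\,\ex^x[e^{-\eta\tau_r}]$ for $|x|>r$ with some $\eta>0$ that we can take as large as we please by enlarging $r$ (since $V\to\infty$).

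The second step is to recognize that $f(x):=\ex^x[e^{-\eta\tau_{\overline B(0,r)^c}}]$ for $|x|>r$, extended by $\1_{B(0,r)}$ on $B(0,r)$ — i.e.\ the model function \eqref{eq:modelhf} — is bounded, non-negative, and $(X,\eta)$-harmonic in $\overline B(0,r)^c$. Now invoke Lemma \ref{lem:defic1}: its hypotheses are Assumptions (A1)--(A3), which hold by the statement, plus the balancing condition \eqref{eq:cond1}, namely the existence of $r_1\geq1$, $r_2\geq 2r_1$, $r_3>r_2$ with
$$
2C_5^4\,h_1(X,r_1,r_2)\,K_1^X(r_2) + h_2(X,r_1)\,|B(0,r_2)|\,K_2^X(r_2,r_3,\infty) < \eta .
$$
The left-hand side, for any fixed admissible triple $(r_1,r_2,r_3)$ — say $r_1=1$, $r_2=2$, $r_3=3$ — is a fixed finite constant $\eta_0(X)$ depending only on the process (this is exactly the quantity \eqref{eq:eta0}). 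Since in the confining case $\eta$ can be chosen arbitrarily large by picking $r$ large enough, we simply select $r$ so that $\eta=|\lambda|+1+(\text{the constant from }\theta)>\eta_0(X)$; then \eqref{eq:cond1} holds with $r_1=1,r_2=2,r_3=3$. Lemma \ref{lem:defic1} then yields $f(x)\leq C_{14}(X,\eta)\,\|f\|_\infty\,\nu(x)=C_{14}\,\nu(x)$ for $|x|\geq R+1$ with $R=(r+1)\vee 3$. Combining with the first-step bound gives $|\varphi(x)|\leq c\,C_{14}\,\|\varphi\|_\infty\,\nu(x)$ for $|x|>R$, which is the claim with $C_{39}=c\,C_{14}$ and the stated $R=R(X,V)$ (the dependence on $V$ enters only through the choice of $r$ ensuring $V\geq|\lambda|+1$ outside $B(0,r)$, and through $\lambda$).

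The only mild subtlety — and the step I would be most careful about — is handling the $\theta$ in the resolvent formula cleanly so that the effective killing rate $\eta$ is genuinely large: one must verify that the ``volume'' term $(\theta+\lambda)\ex^x[\int_0^{\tau_r}\cdots]$ stays $\leq\|\varphi\|_\infty$ uniformly and that taking $\theta\downarrow|\lambda|$ (when $\lambda<0$) or handling $\lambda\geq 0$ directly leaves $\eta\geq|\lambda|+1$, which is unbounded as $r$ grows because $\inf_{|y|>r}V(y)\to\infty$; this is precisely the place where confinement is used, replacing the ``low-lying eigenvalue'' hypothesis of Theorem \ref{thm:defic6} by the growth of $V$. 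Everything else is a direct citation: \eqref{eq:eig1}, the identification of \eqref{eq:modelhf} as harmonic at infinity, and Lemma \ref{lem:defic1}. No jump-tracking or the parameter functions beyond the single constant $\eta_0(X)$ are needed, which is exactly the streamlining the paper advertises.
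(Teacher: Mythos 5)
Your overall strategy is the same as the paper's: apply the resolvent identity \eqref{eq:eig1} with $D=\overline B(0,r)^c$, use $V\to\infty$ to make the effective killing rate $\eta$ exceed the fixed threshold $\eta_0(X)$ from \eqref{eq:eta0}, recognize the model function \eqref{eq:modelhf} as $(X,\eta)$-harmonic at infinity, and invoke Lemma \ref{lem:defic1} with $r_1=1,r_2=2,r_3=3$. That is exactly the route taken in the paper, and your reduction is the intended one.

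There is, however, a genuine flaw in how you handle the ``volume'' term. You claim that ``letting $\theta$ be fixed'' already gives a bound of the form $|\varphi(x)|\le c\,\|\varphi\|_\infty\,\ex^x[e^{-\eta\tau_r}]$, and in your final paragraph you only require that $(\theta+\lambda)\ex^x[\int_0^{\tau_r}\cdots]$ ``stays $\leq\|\varphi\|_\infty$ uniformly''. That is not enough. The volume term is $\|\varphi\|_\infty(\theta+\lambda)/(\theta+\kappa)$ with $\kappa=\inf_{|y|\geq r}V(y)$, a \emph{constant} in $x$; if it merely stays bounded, the resulting estimate $|\varphi(x)|\le c_1\|\varphi\|_\infty + \|\varphi\|_\infty\ex^x[e^{-\eta\tau_r}]$ gives no decay at all, since $\ex^x[e^{-\eta\tau_r}]\to 0$ while $c_1$ does not. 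What is needed is that the volume term \emph{vanish}, and the only mechanism for that is exactly the one you mention parenthetically: set $\theta=-\lambda+\delta$ and let $\delta\downarrow 0$, so $(\theta+\lambda)/(\theta+\kappa)=\delta/(\delta+\kappa-\lambda)\to 0$, leaving $|\varphi(x)|\le\|\varphi\|_\infty\ex^x[e^{-(\kappa-\lambda)\tau_r}]$ with $\eta=\kappa-\lambda$. (Also note the typo $\theta\downarrow|\lambda|$; since $\theta+\lambda>0$ is the constraint, the limit is $\theta\downarrow-\lambda$, valid for all $\lambda\in\R$.) One then chooses $r$ so that $\kappa>(\eta_0+\lambda)\vee 0$, guaranteeing $\eta>\eta_0$; your initial choice $V\geq|\lambda|+1$ outside $B(0,r)$ does not on its own ensure this and must be replaced by the $\eta_0$-dependent threshold. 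With these corrections the argument closes and coincides with the paper's proof.
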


\begin{proof}
Choose $\theta \in \R$ such that $\theta+\lambda>0$ and let $r > 0$ be large enough so that $\kappa:=\inf_{|y| \geq r} V(y) >
(\eta_0 + \lambda) \vee 0$, where $\eta_0$ is given by \eqref{eq:eta0}. Denote $\tau_r := \tau_{\overline B(0,r)^c}$. Similarly as in
Theorem \ref{thm:defic6}, an application of \eqref{eq:eig1} to $D=\overline B(0,r)^c$ gives for every $|x| \geq r+1$ that
\begin{align*}
|\varphi(x)| & \leq (\theta+\lambda) \ex^x\left[ \int_0^{\tau_{r}} e^{-(\theta+\kappa)t} |\varphi(X_t)| dt\right] +
\ex^x\left[\tau_{r} < \infty; e^{-\int_0^{\tau_{r}}(\theta+V(X_s))ds} |\varphi(X_{\tau_{r}})| \right] \\
& \leq  \left\|\varphi\right\|_{\infty} \left(\frac{\theta+\lambda}{\theta+\kappa} + \ex^x\left[e^{-(\theta+\kappa) \tau_{r}} \right]
\right).
\end{align*}
We complete the proof by choosing $\theta = - \lambda + \delta$ with $\delta   >0$. Then by taking the limit $\delta \downarrow 0$ and by application of Lemma \ref{lem:defic1} with
$\eta=\kappa-\lambda > \eta_0$ the claim follows.
\end{proof}
\noindent
Note that the above result is a key step in deriving sharp bounds for eigenfunctions for confining potentials \cite[Th. 2.3-2.4]{bib:KL14}.

\medskip

\textbf{Acknowledgements.} We thank the Referee for a careful reading of the manuscript.

\end{document}